\documentclass[11pt]{amsart}
\usepackage{geometry,verbatim,cite}                
\geometry{letterpaper}                   
\usepackage{graphicx}
\usepackage{amssymb}
\usepackage{epstopdf}
\usepackage{graphicx}
\usepackage{amsmath,amscd}
\usepackage{mathrsfs}
\usepackage{caption}
\usepackage{subcaption}
\usepackage{color}
\usepackage{enumerate}
\usepackage{subcaption}

\usepackage[normalem]{ulem}


\title[Momentum Space with Relaxation]{Electronic Observables for Relaxed Bilayer 2D Heterostructures in Momentum Space }

\author[D. Massatt]{Daniel Massatt}
\address{D. Massatt \\ Department of Mathematics \\ Louisiana State University \\ Baton Rouge, Louisiana, 70803 \\ USA.}
\email{dmassatt@lsu.edu}

\author[S. Carr]{Stephen Carr}
\address{S. Carr \\ Department of Physics \\ Brown University \\ Providence, Rhode Island 02912 \\ USA}
\email{stcarr.nj@gmail.com}
%

\author[M. Luskin]{Mitchell Luskin}
\address{M. Luskin \\ School of Mathematics \\ University of Minnesota \\ Minneapolis, Minnesota, 55455 \\ USA}
\email{luskin@umn.edu}
\thanks{ML's research was supported in part by NSF Award DMS-1906129.}
%


\date{\today}                                           

\keywords{momentum space, real space, 2D, electronic structure, density of states, conductivity, heterostructure, mechanical relaxation, moir\'e patterns}
\numberwithin{equation}{section}

\begin{document}
\begin{abstract}
Momentum space transformations for incommensurate 2D electronic structure calculations are fundamental for reducing computational cost and for representing the data in a more physically motivating format, as exemplified in the Bistritzer-MacDonald model~\cite{bistritzer2011}. However, these transformations can be difficult to implement in more complex systems such as when mechanical relaxation patterns are present. In this work, we aim for two objectives. Firstly, we strive to simplify the understanding and implementation of this transformation by rigorously writing the transformations between the four relevant spaces, which we denote real space, configuration space, momentum space, and reciprocal space. This provides a straight-forward algorithm  for writing the complex momentum space model from the original real space model. Secondly, we implement this for twisted bilayer graphene with mechanical relaxation affects included. We also analyze the convergence rates of the approximations, and show the tight-binding coupling range increases for smaller relative twists between layers, demonstrating that the 3-nearest neighbor coupling of the Bistritzer-MacDonald model is insufficient when mechanical relaxation is included for very small angles. We quantify this and verify with numerical simulation.
\end{abstract}

\maketitle

\newcommand{\dm}[1]{{ #1}} 
\newcommand{\stc}[1]{{ #1}}
\newcommand{\refchange}[1]{{\color{black} #1}}

\def\Xint#1{\mathchoice
{\XXint\displaystyle\textstyle{#1}}%
{\XXint\textstyle\scriptstyle{#1}}%
{\XXint\scriptstyle\scriptscriptstyle{#1}}%
{\XXint\scriptscriptstyle\scriptscriptstyle{#1}}%
\!\int}
\def\XXint#1#2#3{{\setbox0=\hbox{$#1{#2#3}{\int}$ }
\vcenter{\hbox{$#2#3$ }}\kern-.6\wd0}}
\def\mint{\Xint-}

\newtheorem{example}{Example}
 \newtheorem{assumption}{Assumption}
\newtheorem{remark}{Remark}
\newtheorem{prop}{Proposition}
\newtheorem{thm}{Theorem}
\newtheorem{lemma}{Lemma}
\newtheorem{definition}{Definition}
\newtheorem{corollary}{Corollary}
\numberwithin{definition}{section}
\numberwithin{thm}{section}
\numberwithin{remark}{section}
\numberwithin{prop}{section}
\numberwithin{corollary}{section}
\numberwithin{assumption}{section}
\numberwithin{lemma}{section}
\newtheorem{thmpf}{Theorem Statement}
\newtheorem{proppf}{Proposition Statement}

\newcommand{\tbeta}{\tilde \beta}
\newcommand{\oJ}{\overline{J}}
\newcommand{\oI}{\overline{I}}
\newcommand{\oP}{\overline{P}}
\newcommand{\wH}{\widehat{H}}
\newcommand{\Id}{I}
\newcommand{\R}{\mathcal{R}}
\newcommand{\I}{\mathcal{I}}
\newcommand{\Tr}{\text{Tr}}
\newcommand{\TrN}{\text{Tr}_N}
\newcommand{\adj}{\text{adj}}
\newcommand{\per}{\text{per}}
\newcommand{\C}{\mathbb{C}}
\newcommand{\J}{\mathcal{J}}
\newcommand{\interior}{\text{int}}
\newcommand{\sch}{\mathcal{S}(\mathbb{R})}
\newcommand{\supp}{\text{supp}}
\newcommand{\Err}{\text{Err}}
\newcommand{\Imag}{\text{Im}}
\newcommand{\Real}{\text{Re}}
\newcommand{\rins}{r_{\text{ins}}}
\newcommand{\hQ}{Q}
\newcommand{\E}{\mathcal{E}}
\newcommand{\Mat}{\tilde H}
\newcommand{\G}{\mathcal{G}}
\newcommand{\Gt}{\widetilde{\mathcal{G}}}
\newcommand{\dhh}{\widehat{\delta h}}
\newcommand{\Z}{\mathbb{Z}^2}
\newcommand{\K}{\mathcal{R}^*} 
\newcommand{\tK}{{\tilde \R^*}}
\newcommand{\Br}{B_r(0)}
\newcommand{\B}{\mathcal{B}}
\newcommand{\A}{\mathcal{A}}
\newcommand{\yt}{\tilde{y}}
\newcommand{\Rs}{\mathscr{R}}
\newcommand{\Ha}{\mathcal{H}}
\newcommand{\SN}{\mathcal{S}_N}
\newcommand{\gap}{\text{gap}}
\newcommand{\inter}{\text{inter}}
\newcommand{\intra}{\text{intra}}
\newcommand{\Gammat}{\tilde{\Gamma}}
\newcommand{\OmegaMon}{\Omega}
\newcommand{\D}{\mathcal{D}}
\newcommand{\Hmon}{H}
\newcommand{\MatSpace}{ M_{|\Omega_r|}(\mathbb{C})}
\newcommand{\M}{\mathcal{M}}
\newcommand{\Msup}{S[\widehat{H}]}
\newcommand{\nmod}{\text{mod}}
\newcommand{\hOmega}{\widehat{\Omega}}
\newcommand{\T}{\mathcal{T}}
\newcommand{\U}{\mathcal{U}}
\newcommand{\rc}{r_c}
\newcommand{\V}{\mathcal{V}}
\newcommand{\moire}{\theta}
\newcommand{\mP}{\mathcal{P}}
\newcommand{\LL}{\mathcal{L}}
\newcommand{\bR}{\mathbf{R}^*}
\newcommand{\btR}{\mathbf{\tilde{R}^*}}
\newcommand{\interstrength}{V}
\newcommand{\X}{\mathcal{X}}
\newcommand{\Hr}{\widehat{H}_{\text{sc}}}
\newcommand{\Hstart}{\mathcal{H}_{\text{sc}}}
\newcommand{\op}{\text{op}}
\newcommand{\Kreg}{K_{\text{dom}}}
\newcommand{\TrLim}{\underline{\Tr}}
\newcommand{\Breal}{\mathfrak{B}(\ell^1(\Omega))}
\newcommand{\Bmom}{\mathfrak{B}(\ell^1(\Omega^*))}
\newcommand{\Hm}{\mathcal{H}}
\newcommand{\F}{\mathcal{F}}
\newcommand{\genF}{ \mathcal{T}}
\newcommand{\bloch}{\mathcal{B}_\Omega}
\newcommand{\hu}{\mathfrak{h}}
\newcommand{\Si}{S}
\newcommand{\contourprod}{\mathcal{S}}
\newcommand{\Contour}{\mathfrak{C}}
\newcommand{\energy}{\Sigma}
\newcommand{\mon}{\mathfrak{m}}
\newcommand{\dof}{\Omega^*}

\newcommand{\config}{\mathcal{X}}
\newcommand{\hopInter}{\mathcal{S}}
\newcommand{\OpConfig}{\mathcal{O}}
\newcommand{\Underlying}{\mathcal{U}}
\newcommand{\Gen}{\mathcal{\pi}}
\newcommand{\recip}{\mathcal{E}} 
\newcommand{\blochmap}{\mathcal{U}}
\newcommand{\realspace}{\ell^2(\Omega)}
\newcommand{\tr}{\text{tr}}
\newcommand{\Analytic}{\mathcal{H}}

\newcommand{\InterStrength}{E_{\text{inter}}}
\newcommand{\rinteract}{{\tau}}

\newcommand{\erg}{\text{erg}}
\newcommand{\rl}{\text{rl}}
\newcommand{\cf}{\text{cf}}
\newcommand{\rp}{\text{rp}}
\newcommand{\ms}{\text{ms}}
\newcommand{\hopspace}{\UnderlyingSpace}
\newcommand{\name}{\text{arb}}
\newcommand{\eps}{\varepsilon}
\newcommand{\UnderlyingSpace}{\mathbb{H}}
\newcommand{\herm}{\text{herm}}

\section{ Introduction }
\label{sec:intro}

Interest in accurate models for twisted incommensurate materials has exploded in recent years after the discovery of superconductivity in twisted bilayer graphene at the so-called magic angle~\cite{Cao2018sc}. 
2D materials with almost identical periodicities form large scale moir\'e patterns ~\cite{hofstadterkim,incommensurategeim,Dai2016TwistedBG} that are generally {\em incommensurate}~\cite{chen2020plane,espanol2d,massatt2017,Prodan18}, or aperiodic, which has motivated the development of methods to overcome the theoretical and computational challenges posed by the lack of periodicity.  Most current physics investigations overcome the lack of periodicity by utilizing a low-energy continuum approximation that safely removes the details of the precise atomic structure. The most well-known such model is by Bistritzer and MacDonald (BM model)~\cite{bistritzer2011,Catarina2019TwistedBG}, which made a number of  assumptions that greatly simplify the study of twisted bilayer graphene (TBG) near $1^\circ$ twist. Although the BM model is built specifically for TBG, the general framework is applicable for some other materials. The BM model's simple structure and formalism have made it a centerpiece of theoretical work on moir\'e materials. However, its strict assumptions of atomic rigidity and smooth interlayer tunneling \refchange{lead to low accuracy} at twist angles below $1^\circ$~\cite{Carr2019exact}.

Recent work has developed theory and efficient computational methods for studying the electronic structure of incommensurate 2D heterostructures via configuration space and momentum space representations \cite{cances2016,massatt2017,momentumspace17,carr2017,fastkubo19}. These approaches are strongly related, as the BM model can be understood as a momentum space model for TBG with well-chosen approximations simplifying the structure~\cite{wavepacketbm22}. Both the BM model and the momentum space model share computational speedup and physically useful momenta information.  The BM model approximations are in the mechanically unrelaxed regime, and it is known that the mechanical relaxation significantly impacts the geometry and the electronic structure \cite{relaxphysics18,cazeaux2018energy,KimRelax18,Nam2017}. Both relaxation and electronic structure models in \cite{carr2017,relaxphysics18,KimRelax18} are derived from density functional theory (DFT) calculations, so in principle their accuracy is on the level of \dm{Kohn-Sham DFT for these specific systems.}


In this work, we consider a generalized class of tight-binding Hamiltonians that allows for mechanical relaxation and general material types, and we prove this class of Hamiltonians can be transformed into a momentum space model. In particular, we start with a formula for the understood real space observable, and we develop \dm{a method} for computing the same observable in the momentum space framework. We present this by building a diagram of \dm{isomorphic} mappings of Hamiltonians over the four relevant spaces for this model: real, configuration, momentum, and reciprocal spaces.  

\dm{The tight-binding model starts with a discrete space $\Omega$ of degrees of freedom, and a collection of hopping functions $\hu$. A real space operator acting on $\ell^2(\Omega)$ is constructed from the hopping functions, denoted $\Gen^\rl(\hu)$. There is a unitary transformation $\G$ that we prove maps this to a momentum framework, a description of the Hamiltonian as a coupling of waves to other waves given by $\Gen^\ms(\tilde \hu) = \G \Gen^\rp(\hu)\G^*$ where $\tilde \hu$ are hopping functions now on the reciprocal lattices describing a momentum ``tight-binding" model. All the interactions between the two 2D materials arise from weak Van der Waals forces, and as a consequence all Hamiltonian terms arising from these interactions will in some sense be perturbations of the isolated 2D material Hamiltonians.

\refchange{Our main result, Theorem \ref{thm:convergence},  gives a family of finite matrices $H(q)$ for momenta $q$ that can be used to approximate observables that are dependent on a finite region of spectra $\energy \subset \mathbb{R}$ for an appropriate class of 2D materials. Theorem \ref{thm:convergence} gives exponential rates of convergence with respect to the hopping truncation and the momenta truncation.
The Theorem also proves that the decay rate with respect to the hopping truncation is independent of $\theta$ for the unrelaxed Hamiltonian, but is proportional to $1/\theta$ for the relaxed Hamiltonian. 

Further, $H(q)$ gives a pseudo-band structure as we can write the eigenvalues of $H(q)$ as a function of momenta $q$. It is denoted in the literature by ``quasi-band structure" as the eigenvalues of $H(q)$ do not actually represent precise continuous spectrum, but rather the spectra of $H(q)$ give rise to approximate observables of the true system.  
}

Momentum space models require detailed analysis to construct, and we hope this work will be a bridge to simplify the construction of these models for varying materials and additional mechanical effects such as relaxation by presenting a form for $\hu$, deriving $\tilde \hu$, from $\tilde \hu$ deriving $H(q)$, and proving observables converge exponentially in truncation with only a logarithmic dependence on spectral resolution required for the observable.}
We  note that the classification of the four spaces also gives a strong mathematical foundation for the duality between momentum and configuration space \cite{duality20}, and provides a general class of observables including density of states and the Kubo formula for electronic transport \cite{fastkubo19,massatt2020efficient,cances2016,prodan2012}.

Our second result is the implementation of the momentum space algorithm to TBG~\cite{Nam2017,relaxphysics18,cazeaux2018energy}, along with analysis of the band structure via the momentum space Hamiltonian.  We derive an exact momentum space formulation directly from the real space model without any (uncontrolled) approximations. Numerical tests of the convergence rate of the momentum space algorithm shows stark differences between the unrelaxed and mechanically relaxed atomic geometries. Importantly, we note that a number of the interlayer tunneling approximations that are central to the simple continuum model~\cite{bistritzer2011} no longer hold as the twist angle approaches zero. This has implications for recent attempts to connect realistic models of TBG to the so-called chiral symmetric limit~\cite{Tarnopolsky2019,flat21,Khalaf2019,Wang2021}, which is an analytically solvable version of the BM model which requires the interlayer AA and BB orbital tunnelings (tunneling between orbitals of similar honeycomb sublattice index) to be set to zero. 

The chiral symmetric model of TBG also admits an analytically solvable form of the correlated ground state~\cite{Bultinck2020}, making it an important model Hamiltonian for understanding moir\'e correlated insulators and superconductors. Due to the relaxation of the moir\'e interface into large domains of AB stacking, the effective AA and BB tunneling strengths go to zero proportionally with the twist angle~\cite{Carr2019exact}. Therefore, one may hope that small-angle TBG represents an experimentally achievable form of the chiral symmetric model. But in this strongly relaxed limit we find that one can no longer omit the higher momentum scatterings of the AB and BA tunneling types, as the range of relevant scattering distances grows like the inverse of the twist angle. This prevents the low-angle limit of relaxed TBG from mapping onto the chiral symmetric model, as it includes interlayer scattering from only the three lowest momentum scattering modes.



In Section \ref{sec:generalization}, we present the real, configuration, momentum, and reciprocal spaces and the natural transformations between the four spaces along with the observable formulas. We highlight the relation between the hopping functions in these spaces to show how to move from a real space model to a momentum space model. As discussion of the four spaces involves a fair quantity of notation, we simplify the  representation by keeping almost all notation in Section \ref{subsec:notation} for easy reference. In Section \ref{sec:relax}, we introduce the tight-binding model with mechanical relaxation \dm{and write it in a form compatible with momentum space. In Section \ref{sec:algorithm}, we formulate an efficient algorithm in momentum space, provide a bound for the convergence, and quantify the convergence slow-down from mechanical relaxation effects.} In Section \ref{sec:numerics}, we numerically illustrate the algorithm for twisted bilayer graphene with mechanical relaxation. In Section \ref{sec:proofs}, we put the majority of the proofs, and in Appendix \ref{app:relax} we discuss the mechanical relaxation model, which is considered an input model for the tight-binding Hamiltonian.
%

\section{Real, Configuration, Momentum, and Reciprocal Spaces}
\label{sec:generalization}

In this section, we build the four spaces and the isomorphic diagram between them.
Our assumed starting point is a real space tight-binding model defined over two incommensurate lattices with a finite number of orbitals associated to each lattice site forming a discrete basis.  To define the geometry of these lattices in the 2D plane, we write for $j \in \{1,2\}$

\dm{\begin{align}
& \R_j = A_j \Z, & \R_j^* = 2\pi A_j^{-T} \Z, \\
& \Gamma_j = A_j[0,1)^2, & \Gamma_j^* = 2\pi A_j^{-T} [0,1)^2, \\
& \A_1, \A_2 \text{ are finite orbital sets,} & \Gamma_j,\Gamma_j^* \text{ used as tori}.
\end{align}}
$\R_j$ are the real space lattices, $\R_j^*$ are the reciprocal lattices, and $\Gamma_j$, $\Gamma_j^*$ are the corresponding unit cells.
In the tight-binding approximation, associated with each lattice site $R \in \R_j$ there is a set of orbitals $\A_j$. $R\alpha$ then parametrizes all orbitals in the system, where $R \in \R_j$, $\alpha \in \A_j$, $j \in \{1,2\}$. A Hamiltonian operator $H$ then couples orbitals with hopping terms denoted $H_{R\alpha,R'\alpha'}$. These hopping terms are calculated via matrix-valued functions $h$ such that $H_{R\alpha,R'\alpha'} = h_{\alpha\alpha'}(R-R')$. Here $\alpha,\alpha'$ sample the matrix entries of $h(R-R')$. We see the hopping functions define the Hamiltonian. For this reason, we focus on careful book-keeping of the hopping functions through the transformation between spaces.

Consider a quantum wave function in real space restricted to the first sheet, denoted $\psi = \{\psi_{R\alpha}\}_{R\alpha \in \R_1\times \A_1}$. In moir\'e systems, it is useful to label each site $R \in \R_1$ by its respective position to the second sheet.
This disregistry, notated as $b \in \Gamma_2$, is obtained by modulating $R$ with respect to $\Gamma_2$. The indexing of atomic sites by configuration instead of real space location is the basis for configuration space (see Figure \ref{fig:config}). We can then interpret the Hamiltonian $H$ as coupling between sites on the tori $\Gamma_1$ to sites on $\Gamma_2$. The ``hopping'' between different sites will be defined through translation operators over the tori. Momentum space exploits the Bloch basis, which correspond to waves on a single layer parametrized by wavenumber $q \in \Gamma_1^*$, $\psi_{\alpha'}(q) = \{ e^{iq \cdot R} \delta_{\alpha\alpha'}\}_{R\alpha}.$ The presence of a lattice mismatch between the sheets (e.g. a twist) introduces a non-trivial scattering condition between the Bloch bases of the two sheets, coupling a wavenumber $q \in \Gamma_1^*$ to a set of Bloch states with distinct wavenumbers in $\Gamma_2^*$. These wavenumbers in turn couple back to a collection of wavenumbers in sheet one, and so forth. This scattering leads to a lattice model over momenta, which is the basis of momentum space and reciprocal space. This scattering will be understood via translation operators over the reciprocal lattice unit cells $\Gamma_1^*$ and $\Gamma_2^*$ connecting corresponding momenta.

Next, we introduce a compact notation section for easy reference. First, we introduce the four spaces of relevance. Secondly, we discuss the so-called hopping functions, which describe how lattice sites couple. Then we define the Hamiltonian from the hopping functions for the four spaces. Next we define the relevant operator spaces, and finally we introduce the transformations that map between the four spaces.

\begin{figure}[ht]
\centering
\includegraphics[width=.6\textwidth]{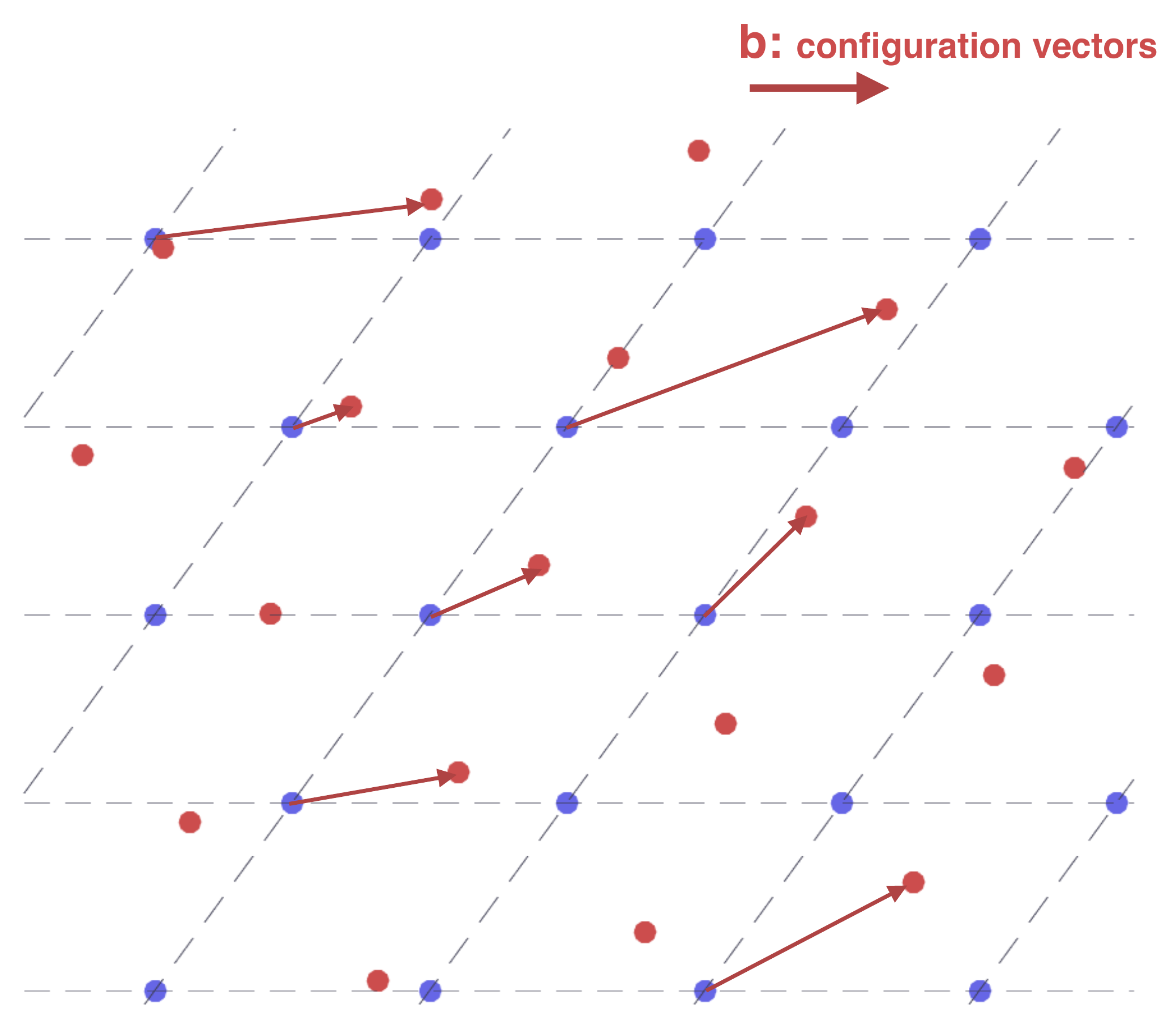}
\caption{A twisted lattice of two sheets, with layer \dm{ two the dots connected by dashed lines, while layer one is the isolated dots. The vectors $b \in \Gamma_2$ (denoted by arrows) parameterize different sites $R \in \R_1$.}}
\label{fig:config}
\end{figure}

\subsection{ Notation }
\label{subsec:notation}

\begin{figure}[ht]
\includegraphics[width=.9\textwidth]{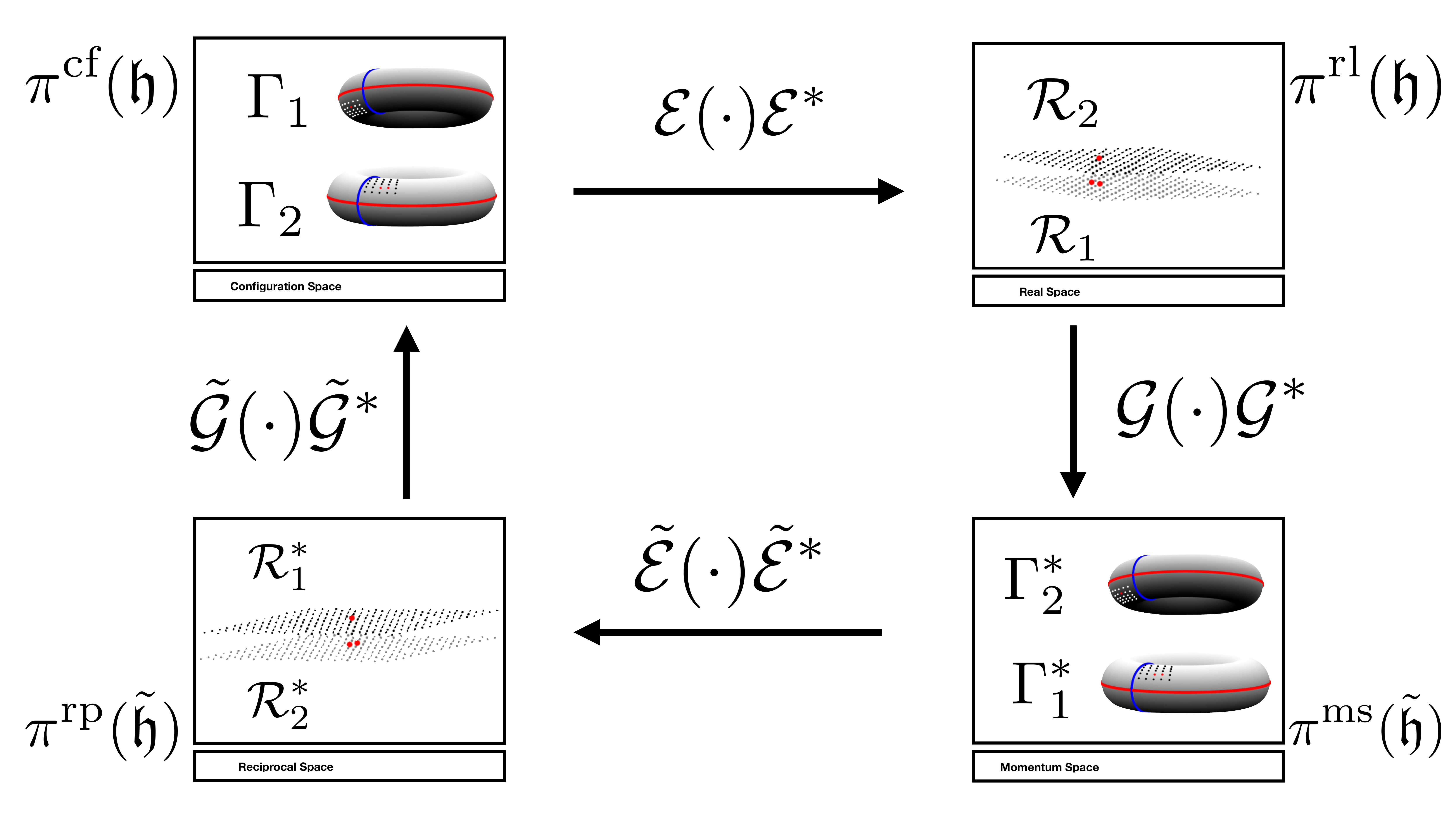}
\caption{The isomorphic diagram is presented above with configuration space on top-left, real space on top-right, momentum space bottom-right, and reciprocal space bottom-left. The rectangular image gives a pictorial representation of the space, the operators are listed on the corners, and the transformations of the operators  are written over the arrows between the spaces. \refchange{We note that there are two Hilbert spaces and inner products associated with real and reciprocal space, so the diagram, while isomorphic, isn't unitary.} }
\label{figure:commutative}
\end{figure}

We will use $G$ to represent entries of reciprocal lattices, $R$ for real space lattice entries, and $\alpha$ for orbitals. We will typically skip reiterating which lattice or orbital set they are in, as the operator and function space context will make this apparent.

\subsubsection{Four spaces}

`rl' will be used to denote real space, `cf' configuration space, `rp' reciprocal space, and `ms' momentum space. First, we define the four spaces and their sheet decompositions. $\X$ will be used to denote the spaces. A subscript of $1$ or $2$ will be the space restricted to sheet $1$ or $2$ respectively, and the superscript will denote the space, either `rl', `cf', `rp', or `ms.'
\begin{align*}
& \Omega_1 = \R_1 \times \A_1, & \Omega_2 = \R_2\times \A_2, & \hspace{1cm} \Omega = \Omega_1 \cup \Omega_2,\\
&\Omega_1^* = \R_2^* \times \A_1, & \Omega_2^* = \R_1^* \times \A_2 , & \hspace{1cm}  \Omega^* = \Omega_1^*\cup\Omega_2^*,\\
& \X_1^\cf = L^2_\per(\Gamma_2;\C^{\A_1}), &\X_2^\cf = L^2_\per(\Gamma_1;\C^{\A_2}), & \hspace{1cm} \X^\cf = \X_1^\cf \oplus \X_2^\cf, \\
& \X_1^\ms = L^2_\per(\Gamma_1^*; \C^{\A_1}), & \X_2^\ms = L^2_\per(\Gamma_2^*; \C^{\A_2}), &  \hspace{1cm}\X^\ms = \X_1^\ms \oplus \X_2^\ms, \\
& \X_1^\rl = \ell^2(\Omega_1), & \X_2^\rl = \ell^2(\Omega_2), & \hspace{1cm} \X^\rl = \X_1^\rl \oplus \X_2^\rl =  \ell^2(\Omega), \\
& \X_1^\rp = \ell^2(\Omega_1^*), & \X_2^\rp = \ell^2(\Omega_2^*), &\hspace{1cm}  \X^\rp = \X_1^\rp \oplus \X_2^\rp = \ell^2(\Omega^*).
\end{align*}
When we use $\psi \in \X^\name$ where `$\name$' is either `rl', `ms', `rp', or `cf', then we will denote the decomposition into sheets as $\psi = (\psi_1,\psi_2)^T$ for $\psi_j \in \X_j^\name$, $\psi \in \X^\name$.

\subsubsection{Hopping functions}
Before defining the hopping functions, we define a couple of relevant spaces. We let $M_{ij}$ be the space of complex-valued $|\A_i|\times |\A_j|$ matrices.
We denote $ \Hm(\mathbb{T}_1,\mathbb{T}_2;M)$ to be the space of multi-variable analytic functions over the tori $\mathbb{T}_j$ whose elements $h$ have corresponding Fourier modes $h_{L_1L_2} \in M$ where $L_j$ is a lattice vector of the Bravais lattice with corresponding unit cell $\mathbb{T}_j,$ i.e.,
\refchange{\begin{gather*}
h_{L_1L_2}=\frac1{|\mathbb{T}_1|\,|\mathbb{T}_2|}\int_{\mathbb{T}_1}\int_{\mathbb{T}_2}h(\xi_1,\xi_2)e^{-i(L_1\cdot\xi_1+L_2\cdot\xi_2)}d\xi_1\,d\xi_2,\\
h(\xi_1,\xi_2)=\sum_{L_1}\sum_{L_2}h_{L_1L_2}e^{i(L_1\cdot\xi_1+L_2\cdot\xi_2)}.
\end{gather*}}
Here $M$ is some vector space, for example the $M_{ij}$'s. \dm{The intralayer hopping of sheet one for configuration and momentum space are respectively denoted as
\begin{align}
&h_R(b) := \sum_{G \in \R_2^*} h_{RG}e^{iG\cdot b}
\refchange{=\frac{1}{\Gamma_1^*} 
\int_{\Gamma^*_1}h(q,b)e^{-iR\cdot q}dq},
&h \in \Hm(\Gamma_1^*,\Gamma_2; M_{11}), \; R \in \R_1, \\
&h_G(q) := \sum_{R \in \R_1} h_{GR}e^{i R \cdot q}\refchange{=\frac{1}{\Gamma_2} 
\int_{\Gamma_2}h(b,q)e^{-iG\cdot b}db}, & h \in \Hm(\Gamma_2, \Gamma_1^*; M_{11}), \; G \in \R^*_2.
\end{align}}
Parallel notation will be used for sheet 2 intralayer hopping.

We use the following convention for the Fourier transform and its inverse:
\begin{align}
&\hat h(\xi) =\frac{1}{(2\pi)^2}\int h(x)e^{-ix\cdot \xi}dx,& \check{h}(x) =    \int h(\xi) e^{ix\cdot \xi}d\xi.
\end{align}
To help define interlayer hopping functions, we define the space
\[
\hopInter(M):= \{ h \in L^2(\mathbb{R}^2;M) : |h(x)| \lesssim e^{-\gamma |x|}, |\hat h(\xi)| \lesssim e^{-\gamma' |\xi|} \text{ for some } \gamma,\,\gamma' > 0\}.
\]
Note that intralayer hopping functions have Fourier modes that decay exponentially due to the analyticity, and interlayer coupling also exhibits exponential decay by definition. This decay rate is useful for Combes-Thomas type estimates of the resolvents. \dm{The model is reasonable as many tight-binding models are approximated by Wannier orbitals, which either exhibit exponential decay or can be reasonably approximated by exponential decay \cite{wannier2007, shiang2016}.}

Next we describe momentum space and configuration space hopping functions. One set of coupling functions will uniquely determine the Hamiltonian in all four spaces
\begin{align*}
&\hopspace^\cf =  \begin{pmatrix} \Analytic(\Gamma_1^*,\Gamma_2;M_{11}) & \hopInter(M_{12}) \\ \hopInter(M_{21}) & \Analytic(\Gamma_2^*,\Gamma_1;M_{22})\end{pmatrix},\\
&\hopspace^\ms =  \begin{pmatrix} \Analytic(\Gamma_2,\Gamma_1^*;M_{11}) & \hopInter(M_{12}) \\ \hopInter(M_{21}) & \Analytic(\Gamma_1,\Gamma_2^*;M_{22})\end{pmatrix} ,\\
&\hopspace^\cf_{\herm} = \biggl\{ \hu = \begin{pmatrix} \hu_{11} & \hu_{12} \\ \hu_{21} & \hu_{22}\end{pmatrix} \in \hopspace^\cf \, : \, [\hu_{jj}]_{RG} = [ \hu_{jj}]_{-R,-G}^*e^{-iG\cdot R} ,\hspace{2mm} \dm{\hu_{12}(b) = \hu_{21}^*(-b)} \biggr\} ,\\
&\hopspace^\ms_\herm = \biggl\{ \tilde \hu = \begin{pmatrix} \tilde\hu_{11} & \tilde\hu_{12} \\\tilde \hu_{21} & \tilde\hu_{22}\end{pmatrix} \in \hopspace^\ms \, : \,[\tilde\hu_{jj}]_{GR} = [\tilde \hu_{jj}]_{-G,-R}^*e^{-iG\cdot R} ,\hspace{2mm} \tilde \hu_{12} = \tilde\hu_{21}^* \biggr\}.
\end{align*}
Here we use `herm' to refer to hermitian, and in the setting of hopping functions we should understand it as hopping functions that give rise to Hermitian or self-adjoint operators.

 We recognize there are multiple subscripts necessary for the notation, and as such we use brackets to separate objects and their sheet index labels from the orbital or Fourier mode indices, which will be outside the brackets. For example, if $\hu \in \dm{\UnderlyingSpace^\cf_\herm}$, then $[\hu_{11}]_{RG,\alpha\alpha'}$ is the intralayer sheet 1 hopping function's $(R,G)$ Fourier mode of the $(\alpha,\alpha')$ entry. 
If $\hu \in \hopspace^\cf_\herm$, we will find the equivalent hopping functions for momentum space given by the following $\tilde \hu \in \hopspace_\herm^\ms$.
Let $c_j^* = |\Gamma_j^*|^{1/2}.$ 
\begin{align}
& \dm{\tilde \hu_{jj}(b,q) = \hu_{jj}^*(-q,-b)} , \\
& \tilde \hu_{ij}(\xi) = c_1^*c_2^* \hat\hu_{ij}(\xi).
\end{align}
Here $i \neq j$.

\subsubsection{Operators}

Before defining the operators and Hamiltonians, we define a few symmetry operators:
\dm{
\begin{align*}
& T_R: \X_j^\cf \rightarrow \X_j^\cf, & T_R\;\psi(b)  = \psi(b+R), \\
& T_G: \X_j^\ms \rightarrow \X_j^\ms, & T_G\;\psi(q) = \psi(q+G), \\
&\Si : \X_j^\cf \rightarrow \X_j^\cf, & \Si\;\psi(b) = \psi(-b).
\end{align*}}
Suppose $\hu \in \UnderlyingSpace_\herm^\cf$ with corresponding $\tilde \hu$. Then we define the corresponding generated operators, where matrix-valued functions are understood as multiplication operators. Below we assume $i \neq j$:
\begin{align*}
& \Gen^\cf_{j\leftarrow j}(\hu_{jj}) := \sum_{R\in\R_j}[\hu_{jj}]_R(\cdot)T_{-R}, & \Gen^\cf_{i \leftarrow j}(\hu_{i \leftarrow j}) := \sum_{R \in \R_j} [\hu_{ij}](\cdot-R)ST_{R},\hspace{2mm} \\
& \Gen^\ms_{i\leftarrow i}(\tilde \hu_{ii}) := \sum_{G \in \R_j^*}[\tilde \hu_{ii}]_G(\cdot)T_{-G},  & \Gen^\ms_{i\leftarrow j}(\tilde \hu_{ij}) := \sum_{G \in \R_i^*} \tilde \hu_{ij}(\cdot+G)\dm{T_{G}},\hspace{2mm} \\
& \biggl(\Gen^\rl_{i \leftarrow i}(\hu_{ii})\biggr)_{R\alpha,R'\alpha'} = [\hu_{ii}]_{R-R',\alpha\alpha'}(R), & \biggr(\Gen^\rl_{j \leftarrow i}(\hu_{ji}) \biggr)_{R\alpha,R'\alpha'}= [\hu_{ji}]_{\alpha\alpha'}(R-R'), \\
& \biggl(\Gen^\rp_{i \leftarrow i}(\tilde \hu_{ii})\biggr)_{G\alpha,G'\alpha'} = [\tilde \hu_{ii}]_{G-G',\alpha\alpha'}(G), & \biggr(\Gen^\rp_{j \leftarrow i}(\tilde \hu_{ji}) \biggr)_{G\alpha,G'\alpha'}= [\tilde \hu_{ji}]_{\alpha\alpha'}(G+G'),\\
& \Gen^\cf(\hu) = \begin{pmatrix} \Gen^\cf_{1 \leftarrow 1}(\hu_{11}) & \Gen^\cf_{1 \leftarrow 2}(\hu_{12}) \\ \Gen^\cf_{2\leftarrow 1}(\hu_{21}) & \Gen^\cf_{2\leftarrow 2}(\hu_{22})\end{pmatrix}, & \Gen^\ms(\tilde \hu) = \begin{pmatrix} \Gen^\ms_{1 \leftarrow 1}(\tilde \hu_{11}) & \Gen^\ms_{1 \leftarrow 2}(\tilde \hu_{12}) \\ \Gen^\ms_{2\leftarrow 1}(\tilde \hu_{21}) & \Gen^\ms_{2\leftarrow 2}(\tilde \hu_{22})\end{pmatrix}, \\
&\Gen^\rl(\hu) = \begin{pmatrix} \Gen^\rl_{1 \leftarrow 1}(\hu_{11}) & \Gen^\rl_{1 \leftarrow 2}(\hu_{12}) \\ \Gen^\rl_{2\leftarrow 1}(\hu_{21}) & \Gen^\rl_{2\leftarrow 2}(\hu_{22})\end{pmatrix}, & \Gen^\rp(\tilde \hu) = \begin{pmatrix} \Gen^\rp_{1 \leftarrow 1}(\tilde \hu_{11}) & \Gen^\rp_{1 \leftarrow 2}(\tilde \hu_{12}) \\ \Gen^\rp_{2\leftarrow 1}(\tilde \hu_{21}) & \Gen^\rp_{2\leftarrow 2}(\tilde \hu_{22})\end{pmatrix}.
\end{align*}

\subsubsection{Operator Spaces}

We define the spaces of \dm{Hamiltonian} operators over the four spaces.
\dm{\begin{align}
& \OpConfig^\rl_\herm = \Gen^\rl(\UnderlyingSpace^\cf_\herm), & \OpConfig^\cf_\herm = \Gen^\cf(\UnderlyingSpace^\cf_\herm), \\
& \OpConfig^\rp_\herm = \Gen^\rp(\UnderlyingSpace^\ms_\herm), & \OpConfig^\ms_\herm = \Gen^\ms(\UnderlyingSpace^\ms_\herm).
\end{align}
\dm{We will construct a space of operators associated to observables dependent on a set of Hamiltonian operators. This generalization includes observables such as entries of the Kubo formula, density of states, and Chern numbers. } For $H$ an operator, we define
\begin{align*}
   & \Contour_\varepsilon(H) = \{ C \text{ a contour around } \sigma(H) : d(C, \sigma(H) ) \geq \varepsilon\}, \\
   & \text{Int}(\contourprod) := \times_{j=1}^n \text{Int}(C_j), \\
   & \contourprod := \times_{j=1}^n C_j, \\
   &\Lambda(\contourprod) := \{ g \text{ analytic on } \text{Int}(\contourprod) \},
\end{align*}
where $\text{Int}(C_j)$ denotes the interior of the contour $C_j$ and $d(A,B)$ denotes the distance between the sets $A$ and $B$ in the complex plane. 
We denote $dz = dz_1\cdots dz_n,$ $z = (z_1,\cdots z_n)$, and
\begin{align*}
& \OpConfig_\varepsilon^\name = \biggl\{ \int_\contourprod g(z)\prod_{j=1}^n(z_j-H_j)^{-1}dz : n > 0, \; H_1,\cdots H_n \in \OpConfig^\name_\herm, \; C_j \in \Contour_\varepsilon(H_j),\; g \in \Lambda(\C)\biggr\}, \\
& \OpConfig^\name = \cup_{\varepsilon > 0} \; \OpConfig_\varepsilon^\name.
\end{align*}
We represent $O \in \OpConfig^\name$ by the set $(g,H_1,\cdots H_n)$.
}

\subsubsection{Observables}

Computing observables of quantum systems requires taking traces of appropriate objects. 
For $\tilde \hu \in \UnderlyingSpace^\ms_\herm$, we define $t_{q'}$ by
\begin{align}
&(t_{q'} \tilde \hu)_{jj}(b,q) = \tilde \hu_{jj}(b,q+q'), & (t_{q'}\tilde \hu)_{ij}(\xi) = c_1^*c_2^* \tilde \hu_{ij}(\xi + q').
\end{align}
Likewise we have for \refchange{ $\hu \in \UnderlyingSpace^\cf_\herm$}
\begin{align}
&(t_{b'} \hu)_{jj}(q,b) = \hu_{jj}(q,b+(-1)^{j+1}b'), & (t_{b'}\hu)_{ij}(x) = \hu_{ij}(x + (-1)^ib').
\end{align}
We use $O^\name \in \OpConfig^\name$ to be an operator of an observable.
Reciprocal and real spaces have thermodynamic limit traces, which we denote
\begin{align*}
& \TrLim \; O^\rl = \lim_{r\rightarrow\infty}\frac{1}{\# \Omega_r} \sum_{R\alpha \in \Omega_r} [O^\rl]_{R\alpha,R\alpha},& \Omega_r = \{ R\alpha \in \Omega: |R| < r\}, \\
&\TrLim \; O^\rp =\lim_{r\rightarrow\infty} \frac{1}{\# \Omega_r^*} \sum_{G\alpha \in \Omega_r^*} [O^\rp]_{G\alpha,G\alpha},& \Omega_r^* = \{ G\alpha \in \Omega^*: |G| < r\}.
\end{align*}
{We define traces over configuration space and momentum space as follows:
\begin{equation*}
\Tr \; [O^\cf]_{jj} = \int_{\Gamma_{3-j}}  \tr \;[\hu_{jj}]_0(b)db,\hspace{1cm}
\Tr \; [O^\ms]_{jj} = \int_{\Gamma_j^*} \tr \; [\tilde \hu_{jj}]_0(q)dq.
\end{equation*}
Traces over the full momentum and configuration spaces are given by}
\begin{align*}
& \Tr \; O^\cf = \nu\sum_{j=1}^2  \Tr \; [O^\cf]_{jj},& \nu = (|\A_1|\cdot|\Gamma_2| + |\A_2| \cdot |\Gamma_\refchange{1}|)^{-1}, \\
& \Tr \; O^\ms = \nu^*\sum_{j=1}^2  \Tr \; [O^\ms]_{jj}, & \nu^* = (|\A_1| \cdot |\Gamma_1^*| + |\A_2| \cdot |\Gamma_2^*|)^{-1}.
\end{align*}

\subsubsection{Transformations}

We noted that the \dm{hopping} functions uniquely determined the Hamiltonian in all four spaces. Real and configuration Hamiltonians share the same underlying functions, and likewise momentum and reciprocal. We thus define the natural transform from configuration (momentum) to real (reciprocal). \refchange{We first define two separate inner products and associated Hilbert spaces for real space and reciprocal space. To do this, we first define $$\X_a^\name = \{ \psi \in \X^{\name} \; :\; \psi \text{ is analytic}\}$$ for `$\name$' either `$\cf$' or `$\ms$'. Then without writing the Hilbert space associated with the range quite yet, we write
 $\recip: \config^\cf_a \rightarrow \recip(\config^\cf_a)$ and $\tilde \recip : \config_a^\ms \rightarrow \recip(\config_a^\ms)$ by
\begin{align}
& \recip\psi_{R\alpha} = \psi_\alpha(R), & \alpha \in \A_j,\\
& \tilde\recip\psi_{G\alpha} = \psi_\alpha(G), & \alpha \in \A_j.
\end{align}
Next we define the {\em ergodic inner products}, and the {\em ergodic Hilbert spaces}, associated with the completion of the range of $\recip_b$ and $\tilde\recip_q$. Consider arbitrary $\psi,\phi \in \X^\cf_a$. Let $\# \Omega_r$ correspond to the cardinality of $\Omega_r$.  Then\cite{ cances2016,massatt2017}
\begin{equation*}
    \begin{split}
        \langle \recip\psi, \recip\phi\rangle_\erg &:= \lim_{r\rightarrow \infty} \frac{1}{\# \Omega_r} \sum_{R\alpha \in \Omega_r} \langle \recip\psi_{R\alpha}, \recip\phi_{R\alpha}\rangle  \\
        &= \nu\biggl(\sum_{\alpha \in \A_1}\int_{\Gamma_2} \langle \psi_1(b),\phi_1(b)\rangle db + \sum_{\alpha \in \A_2} \int_{\Gamma_1} \langle \psi_2(b), \phi_2(b)\rangle db\biggr) \\
        &= \langle \psi, \phi\rangle.
    \end{split}
\end{equation*}
We call the associated Hilbert space $\X^\rl_a$.
For $\psi,\phi \in \X^\ms_a$, we define the inner product
\begin{equation*}
    \begin{split}
        \langle \tilde\recip\psi, \tilde\recip\phi\rangle_\erg &:= \lim_{r\rightarrow \infty} \frac{1}{\# \Omega^*_r} \sum_{G\alpha \in \Omega^*_r} \langle \tilde\recip\psi_{G\alpha}, \tilde\recip\phi_{G\alpha}\rangle  \\
        &= \nu^*\biggl(\sum_{\alpha \in \A_1}\int_{\Gamma_1^*} \langle \psi_1(q),\phi_1(q)\rangle dq + \sum_{\alpha \in \A_2} \int_{\Gamma_2^*} \langle \psi_2(1), \phi_2(q)\rangle dq\biggr) \\
        &= \langle \psi, \phi\rangle.
    \end{split}
\end{equation*}
Here 
\begin{align*}
    &\nu = (|\A_1|\cdot|\Gamma_2| + |\A_2|\cdot |\Gamma_1|)^{-1}, & \nu^* = (|\A_1|\cdot|\Gamma_1^*| + |\A_2|\cdot |\Gamma_2^*|)^{-1}.
\end{align*}
We note these operators are unitary mappings between Hilbert spaces.}
We define \refchange{
\begin{align}
    &\blochmap_\recip(O^\cf) = \recip(O^\cf)\recip^*, & O^\cf \in \OpConfig^\cf_\herm,\\
    &\blochmap_{\tilde\recip}(O^\ms) = \tilde\recip(O^\ms)\tilde\recip^*, & O^\ms \in \OpConfig^\ms_\herm.
\end{align}}
Here $\recip$ and $\tilde \recip$ are utilizing the ergodic structure of configuration and momentum space to unfold via ergodicity onto infinite incommensurate lattices \refchange{as described above. We note that $\OpConfig_\rl(\hu)$ can be seen either over the Hilbert space $\X_a^\rl$ or $\X^\rl$ (and likewise for reciprocal space). Since the representation of the operator is the same, we use the same notation for both. It will be important to realize however that in the diagram in Figure \ref{figure:commutative}, the operations $\blochmap_\recip$ and $\blochmap_{\tilde \recip}$ map operators over configuration and momentum space to operators over the completion of $\X^\rl_a$ and $\X^\rp_a$ respectively. The momentum transformations we next define will now consider the same operators $\OpConfig^\cf(\hu)$ and $\OpConfig^\ms(\hu)$ as operators over $\X^\rl$ and $\X^\rp$ respectively. While this representation switch clearly changes the operators and the Hilbert space, a result of this work is that observables remain the same regardless of this switch in Hilbert spaces.

We also wish the real and reciprocal spaces to be able to describe various local configurations in configuration and momentum space, and to this end we define the shifted ergodic maps
\begin{align}
& \recip_b = \recip \begin{pmatrix} T_b & 0 \\ 0 & T_{-b}\end{pmatrix},\\
& \tilde \recip_q = \recip \begin{pmatrix} T_q & 0 \\ 0 & T_q\end{pmatrix}
\end{align}
acting on configuration and momentum spaces respectively.
}

The transformation from real (reciprocal) to momentum (configuration) is given by the correct Bloch transform. We define $\G_j : \X_j^\rl \rightarrow \X_j^\ms$ and $\tilde \G_j : \X_j^\rp \rightarrow \X_j^\cf$ and combine them to form the unitary transformations $\G : \X^\rl \rightarrow \X^\ms$ and $\tilde \G : \X^\rp \rightarrow \X^\cf$ as follows:
\begin{align}
& \G_1 \psi_\alpha(q_1) =|\Gamma_1^*|^{-1/2} \sum_{R \in \R_1} e^{-iq_1\cdot R} \psi_{R\alpha}, & \tilde \G_1 \psi_\alpha(b_2) = |\Gamma_2|^{-1/2} \sum_{G \in \R_2^*} e^{iG\cdot b_2} \psi_{G\alpha},\\
& \G_2 \psi_\alpha(q_2) = |\Gamma_2^*|^{-1/2}\sum_{R \in \R_2} e^{-iq_2\cdot R} \psi_{R\alpha}, & \tilde \G_2 \psi_\alpha(b_1) = |\Gamma_1|^{-1/2}\sum_{G \in \R_1^*} e^{iG \cdot b_1} \psi_{G\alpha}, \\
& \G \begin{pmatrix} \psi_1(q_1)\\ \psi_2(q_2)\end{pmatrix} = \begin{pmatrix} \G_1 \psi_1(q_1) \\ \G_2 \psi_2(q_2) \end{pmatrix}, &  \tilde \G \begin{pmatrix}\psi_1(b_2) \\ \psi_2(b_1)\end{pmatrix} = \begin{pmatrix} \tilde \G_1 \psi_1(b_2) \\ \tilde \G_2 \psi_2(b_1) \end{pmatrix}.
\end{align}
\dm{The unitary transformations $\blochmap_\G := \G(\cdot)\G^*$ and $\blochmap_{\tilde \G} := \tilde \G(\cdot)\tilde \G^*$ complete the \dm{isomorphic} diagram.} The sign change in the definition of $\G_j$ and $\tilde \G_j$ was chosen for the user's book-keeping convenience.

\subsubsection{Summary}
In summary, for $\name$ either `rl', `cf', `ms', or `rp', $\X^\name$ corresponds to the basis of the quantum wave functions, $\OpConfig^\name$ to the space of operators, $\OpConfig^\name_\herm$ to the self-adjoint Hamiltonian operators, and $\Gen^\name$ to the transformation of hopping functions into the operator space. For $\name$ either `ms' or `cf',  $\hopspace^\name_\herm$ is the space of hopping functions that give rise to self-adjoint Hamiltonian operators. $\blochmap_\G$, $\blochmap_{\tilde \G}$, \dm{$\blochmap_\recip$, and $\blochmap_{\tilde\recip}$} form the \dm{isomorphic} diagram between the four operator spaces \dm{(see Figure} \ref{figure:commutative}). A key take away here is that the formal notation gives direct formulas for computing the hopping functions corresponding to momentum and reciprocal spaces from the hopping functions corresponding to real and configuration spaces, which will allow population of the matrices necessary for simulations over momentum space.






\subsection{The Isomorphic Diagram of the Four Spaces}
\label{sec:commutative}

As outlined in the notation section, we have four spaces in which to represent the Hamiltonian through the hopping functions. $\X^\ms$ and $\X^\cf$ are continuous $L^2$ spaces that act over momenta and local configurations respectively, while $\X^\rp$ and $\X^\rl$ are discrete $\ell^2$ lattice models \refchange{We also have the ergodic versions of reciprocal and real space $\X^\rp_a$ and $\X^\rl_a$. When working with the ergodic transformations $\recip$ or $\tilde \recip$, we will assume the ergodic inner products and Hilbert spaces for reciprocal and real spaces. When working with the Bloch transforms, we assume the $\ell^2$ inner products and Hilbert spaces.}
 The objective of this section is to derive the relations between the four spaces. Most importantly for numerics, this will give a clear connection between the formulas for observable calculations in real space to that of momentum space, the latter space being the space where the BM model arises from.

\dm{We begin by stating the transformation of operators from configuration to real space, and from momentum to reciprocal space, which involves ergodic unfolding.
\begin{thm}
\label{thm:ergodicunfolding}
For $\hu \in \UnderlyingSpace^\cf_\herm$, we have
\begin{align}
&\blochmap_{\recip_b} \bigl(\Gen^\cf(\hu)\bigr) = \Gen^\rl(t_b\hu), \\
&\blochmap_{\tilde\recip_q}\bigl( \Gen^\ms(\tilde \hu)\bigr) = \Gen^\rp(t_q\tilde\hu).
\end{align}
Suppose $O^\cf \in \OpConfig^\cf$ is constructed from the set $(g, \Gen^\cf(\hu_1),\cdots \Gen^\cf(\hu_n))$, $O_b^\rl \in \OpConfig^\rl$ is constructed from the set $(g, \Gen^\rl(t_b\hu_1),\cdots \Gen^\rl(t_b\hu_n))$, $O^\ms \in \OpConfig^\ms$ is constructed from the set $(g, \Gen^\ms(\tilde \hu_1),\cdots \Gen^\ms(\tilde \hu_n))$, and $O_q^\rp \in \OpConfig^\rp$ is constructed from the set $(g, \Gen^\rp(t_q\tilde \hu_1),\cdots \Gen^\rp(t_q\tilde \hu_n))$. Then
\begin{align}
& \blochmap_{\recip_b}( O^\cf) = O_b^\rl, \\
& \blochmap_{\dm{\tilde\recip_q}}( O^\ms) = O_q^\rp.
\end{align}
\end{thm}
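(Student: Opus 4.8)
The plan is to establish the two operator identities $\blochmap_{\recip_b}(\Gen^\cf(\hu)) = \Gen^\rl(t_b\hu)$ and $\blochmap_{\tilde\recip_q}(\Gen^\ms(\tilde\hu)) = \Gen^\rp(t_q\tilde\hu)$ first, and then lift them to the observable operators $O^\name$ via the Cauchy-integral/resolvent representation. For the first identity, I would work sheet-by-sheet, treating the intralayer ($j \leftarrow j$) and interlayer ($i \leftarrow j$) blocks separately, since the generators $\Gen^\cf$ and $\Gen^\rl$ are defined block-wise. Concretely, I would compute the matrix entries $[\recip_b \Gen^\cf(\hu)\recip^*]_{R\alpha,R'\alpha'}$ directly from the definitions: apply $\recip^*$ (adjoint of the ergodic evaluation map), then the translation/multiplication operators $[\hu_{jj}]_R(\cdot)T_{-R}$ or $[\hu_{ij}](\cdot-R)\Si T_R$, and finally read off the $R\alpha$ component via $\recip\psi_{R\alpha} = \psi_\alpha(R)$. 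The goal is to show these entries equal $[t_b\hu_{ii}]_{R-R',\alpha\alpha'}(R)$ (intralayer) and $[t_b\hu_{ji}]_{\alpha\alpha'}(R-R')$ (interlayer), which is exactly $\Gen^\rl(t_b\hu)$ by definition. The shift $t_b$ should emerge precisely from the $T_b,T_{-b}$ built into $\recip_b$ versus the bare $\recip$; tracking the $(-1)^{j+1}$ and $(-1)^i$ signs in the definition of $t_b$ against the sheet-dependent translation directions will be the bookkeeping crux here. The momentum-space identity is formally dual and I would prove it the same way, using $\tilde\recip\psi_{G\alpha} = \psi_\alpha(G)$, the generators $\Gen^\ms$, and the reciprocal-lattice translations $T_{-G}$, $T_G$, with $t_q$ arising from $\tilde\recip_q$.

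Once the generator-level identities hold, the extension to observables is an algebraic consequence of $\blochmap_{\recip_b}$ being a $*$-homomorphism (indeed a unitary conjugation). Given $O^\cf$ built from $(g,\Gen^\cf(\hu_1),\dots,\Gen^\cf(\hu_n))$, I would write
\begin{equation*}
O^\cf = \int_\contourprod g(z)\prod_{k=1}^n (z_k - \Gen^\cf(\hu_k))^{-1}\,dz,
\end{equation*}
conjugate by $\recip_b$, and push the conjugation through. Because conjugation by a unitary commutes with resolvents, $\recip_b (z_k - \Gen^\cf(\hu_k))^{-1}\recip_b^* = (z_k - \recip_b\Gen^\cf(\hu_k)\recip_b^*)^{-1} = (z_k - \Gen^\rl(t_b\hu_k))^{-1}$ using the first identity. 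The scalar factor $g(z)$ and the measure $dz$ pass through untouched, and the finite product maps term-by-term, yielding exactly the integral representation of $O_b^\rl$. The same argument with $\tilde\recip_q$ and the momentum identity gives $\blochmap_{\tilde\recip_q}(O^\ms) = O_q^\rp$.

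The main obstacle I anticipate is not the homomorphism step but justifying that $\recip_b$ acts as a genuine unitary intertwiner at the level of these unbounded-lattice operators, so that the resolvent conjugation identity is legitimate. The subtlety flagged in the excerpt is that $\recip$ maps into the completion of $\X^\rl_a$ under the \emph{ergodic} inner product, and the paper explicitly switches to viewing $\Gen^\rl(\hu)$ over the ordinary $\ell^2(\Omega)$ Hilbert space afterward. I would need to verify that the contour $C_k \in \Contour_\varepsilon(\Gen^\cf(\hu_k))$ remains a valid resolvent contour for $\Gen^\rl(t_b\hu_k)$, i.e. that the spectra (and the $\varepsilon$-separation) are preserved under $\blochmap_{\recip_b}$; this follows from unitary equivalence but must be checked against the Hilbert-space change. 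A secondary technical point is ensuring the interlayer blocks, which involve $\Si$ (the reflection $\psi(b)\mapsto\psi(-b)$) and the Fourier-transform relation $\tilde\hu_{ij} = c_1^* c_2^* \hat\hu_{ij}$, produce the correct argument $R-R'$ (real) or $G+G'$ (reciprocal) with the right sign conventions; this is where the reflection operator and the sign choices in $t_b$, $t_q$ interact and where a sign error would be easiest to make.
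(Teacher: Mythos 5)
Your proposal matches the paper's proof in both structure and substance: the generator-level identities are verified by direct blockwise computation from the definitions of $\recip_b$, $\tilde\recip_q$, and the generators (the paper phrases this as the intertwining relation $\recip_b\Gen^\cf(\hu)=\Gen^\rl(t_b\hu)\recip_b$ applied to an arbitrary $\psi$, which is equivalent to your matrix-entry computation), and the extension to observables proceeds exactly as you describe, by conjugating each resolvent factor and passing through the contour integral by continuity in $z$. The technical cautions you raise about the ergodic Hilbert space and sign conventions are legitimate but do not change the argument.
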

\begin{proof}
The proof is in Section \ref{proof:ergodicunfolding}.
\end{proof}
The theorem above relates reciprocal space with momentum space, and real space with configuration space through a similarity transform, where the transform describes ergodic sampling. This completes the two horizontal legs of the \dm{isomorphic} diagram as shown in Figure \ref{figure:commutative}.
To complete the two vertical legs of the diagram, we must define the relationship between real and momentum spaces, or reciprocal and configuration spaces.
These two relationships are understood through the Bloch transforms:
\begin{thm}
\label{thm:blochtransform}
For $\hu \in \UnderlyingSpace^\cf_\herm$, we have
\begin{align}
& \blochmap_\G \bigl( \Gen^\rl(\hu) \bigr) = \Gen^\ms(\tilde \hu), \\
& \blochmap_{\tilde\G} \bigl( \Gen^\rp(\tilde \hu)\bigr) = \Gen^\cf(\hu).
\end{align}
Suppose $O^\cf \in \OpConfig^\cf$ is constructed from the set $(g, \Gen^\cf(\hu_1),\cdots \Gen^\cf(\hu_n))$, $O^\rl \in \OpConfig^\rl$ is constructed from the set $(g, \Gen^\rl(\hu_1),\cdots \Gen^\rl(\hu_n)),$ $O^\ms \in \OpConfig^\ms$ is constructed from the set $(g, \Gen^\ms(\tilde \hu_1),\cdots \Gen^\ms(\tilde \hu_n))$, and $O^\rp \in \OpConfig^\rp$ is constructed from the set $(g, \Gen^\rp(\tilde \hu_1),\cdots \Gen^\rp(\tilde \hu_n))$. Then
\begin{align}
& \blochmap_\G( O^\rl)= O^\ms, \\
& \blochmap_{\tilde \G}( O^\rp) = O^\cf.
\end{align}
\end{thm}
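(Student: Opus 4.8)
The plan is to establish the statement in two stages: first the generator-level identities $\blochmap_\G(\Gen^\rl(\hu)) = \Gen^\ms(\tilde\hu)$ and $\blochmap_{\tilde\G}(\Gen^\rp(\tilde\hu)) = \Gen^\cf(\hu)$, and then to promote these to the observable identities $\blochmap_\G(O^\rl) = O^\ms$ and $\blochmap_{\tilde\G}(O^\rp) = O^\cf$ using the resolvent/contour representation of $\OpConfig^\name$. The second stage is soft and mirrors the corresponding argument in Theorem \ref{thm:ergodicunfolding}, so the genuine content is the first stage. Moreover, the real$\leftrightarrow$reciprocal and configuration$\leftrightarrow$momentum duality built into the notation of Section \ref{subsec:notation} means the identity $\blochmap_{\tilde\G}(\Gen^\rp(\tilde\hu)) = \Gen^\cf(\hu)$ is obtained from $\blochmap_\G(\Gen^\rl(\hu)) = \Gen^\ms(\tilde\hu)$ by interchanging the roles of the two Bravais lattices and their duals; I would therefore prove the real-to-momentum identity in detail and note that the reciprocal-to-configuration one follows verbatim.

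For the generator identity I would decompose $\Gen^\rl(\hu)$ into its four blocks and conjugate each by the per-layer Bloch transforms $\G_1,\G_2$, since $\G = \mathrm{diag}(\G_1,\G_2)$ acts blockwise. Take first an intralayer block $\Gen^\rl_{j\leftarrow j}(\hu_{jj})$, whose matrix elements $[\hu_{jj}]_{R-R'}(R)$ depend on the displacement $R-R'$ and, through the disregistry $R \bmod \Gamma_{3-j}$, on the absolute site. Applying $\G_j(\cdot)\G_j^*$ to a test wave $\phi\in\X_j^\ms$, I would expand the disregistry dependence in its torus Fourier series $[\hu_{jj}]_{R-R'}(R) = \sum_{G}[\hu_{jj}]_{R-R',G}\,e^{iG\cdot R}$ over $G\in\R_{3-j}^*$, substitute $S = R-R'$, and collapse the remaining $\R_j$-sum by Poisson summation on the torus $\Gamma_j^*$. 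The phase $e^{iG\cdot R}$ with $G\in\R_{3-j}^*$, together with the $\R_j^*$-periodicity of $\phi$, turns the absolute-site dependence into a momentum translation by the incommensurate vector $G$, producing precisely the $\sum_{G\in\R_{3-j}^*}[\,\cdot\,]_G T_{-G}$ structure of $\Gen^\ms_{j\leftarrow j}$. Matching the resulting Fourier coefficients against the transformation rule $\tilde\hu_{jj}(b,q) = \hu_{jj}^*(-q,-b)$, and invoking the hermitian-class constraints defining $\hopspace^\cf_\herm$ and $\hopspace^\ms_\herm$ to reconcile the cocycle-type phase $e^{-iG\cdot R}$, then identifies the coefficient with $[\tilde\hu_{jj}]_G$.

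The interlayer blocks are handled by the same conjugation, but here the incommensurability is explicit and this is where I expect the main difficulty. Now $\Gen^\rl_{i\leftarrow j}(\hu_{ij})$ couples $\R_j$ to $\R_i$ through the genuinely two-dimensional function $[\hu_{ij}](R-R')$ on $\mathbb{R}^2$, so no single torus Fourier series is available; instead I would represent $[\hu_{ij}]$ by its continuous inverse Fourier transform and then carry out two simultaneous reductions: a Poisson summation over the output lattice $\R_i$, which places Dirac masses at the reciprocal vectors $\R_i^*$, and a pairing of the input $\R_j$-sum against the $\R_j^*$-periodic wave $\phi$, which samples $\phi$ at momenta shifted by those reciprocal vectors. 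The continuous $\xi$-integral then collapses onto the discrete scattering set $p+\R_i^*$, yielding $\sum_{G\in\R_i^*}\hat\hu_{ij}(p+G)\,\phi(p+G)$, which is $\Gen^\ms_{i\leftarrow j}$ applied to $c_1^*c_2^*\hat\hu_{ij}$; here the two normalization factors $|\Gamma_i^*|^{1/2}$ and $|\Gamma_j^*|^{1/2}$ emerging from the half-powers in $\G_i,\G_j$ combine to the constant $c_1^*c_2^*$ appearing in $\tilde\hu_{ij}(\xi) = c_1^*c_2^*\hat\hu_{ij}(\xi)$. Getting the evaluation points, the scattering lattice, and these normalization constants all simultaneously correct across the two incommensurate lattices is the crux of the argument.

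Finally, for the observable identities I would use that $\G$ is unitary. For each node $z_k$ and Hamiltonian $H_k = \Gen^\rl(\hu_k)$ the generator identity gives $\G(z_k - H_k)^{-1}\G^* = (z_k - \Gen^\ms(\tilde\hu_k))^{-1}$, and unitary equivalence preserves the spectrum, so every admissible contour $C_k\in\Contour_\eps(H_k)$ is admissible for $\Gen^\ms(\tilde\hu_k)$ as well. Since conjugation by a unitary distributes over products, $\G AB\G^* = (\G A\G^*)(\G B\G^*)$, and commutes with the contour integral, applying $\blochmap_\G$ term by term to $O^\rl = \int_{\contourprod} g(z)\prod_k (z_k - H_k)^{-1}\,dz$ reproduces $O^\ms$; the reciprocal-to-configuration observable identity follows identically with $\tilde\G$ in place of $\G$.
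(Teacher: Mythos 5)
Your proposal follows essentially the same route as the paper's proof: the paper also establishes the intertwining relation $\G\,\Gen^\rl(\hu) = \Gen^\ms(\tilde\hu)\,\G$ blockwise by expanding the intralayer disregistry dependence in its torus Fourier series and representing the interlayer coupling by its continuous Fourier transform, then collapsing the lattice sums via Poisson summation onto the reciprocal scattering set with the $c_1^*c_2^*$ normalization, and finally lifts the identity to observables through resolvent conjugation and continuity in $z$ exactly as you describe. The only cosmetic difference is that the paper writes out the reciprocal-to-configuration computation explicitly rather than deferring to duality, but the argument is the same.
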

\begin{proof}
The proof is in Section \ref{proof:blochtransform}.
\end{proof}
With the \dm{isomorphic} map defined, we next state local site and local momenta sampling formulations for observables in configuration and momentum space:
\begin{thm}
\label{thm:representation}
Suppose $O^\cf \in \OpConfig^\cf$ is constructed from the set $(g, \Gen^\cf(\hu_1),\cdots \Gen^\cf(\hu_n))$, $O_b^\rl \in \OpConfig^\rl$ is constructed from the set $(g, \Gen^\rl(t_b\hu_1),\cdots \Gen^\rl(t_b\hu_n))$, $O^\ms \in \OpConfig^\ms$ is constructed from the set $(g, \Gen^\ms(\tilde \hu_1),\cdots \Gen^\ms(\tilde \hu_n))$, and $O_q^\rp \in \OpConfig^\rp$ is constructed from the set $(g, \Gen^\rp(t_q\tilde \hu_1),\cdots \Gen^\rp(t_q\tilde \hu_n))$. Then
\begin{align}
& \Tr \; O^\cf = \nu \biggl(\sum_{\alpha \in \A_1} \int_{\Gamma_2} [O_b^\rl]_{0\alpha,0\alpha}db + \sum_{\alpha \in \A_2} \int_{\Gamma_1} [O_b^\rl]_{0\alpha,0\alpha}db\biggr), \\
& \Tr \; O^\ms = \nu^* \biggl( \sum_{\alpha \in \A_1} \int_{\Gamma_1^*} [O_q^\rp]_{0\alpha,0\alpha}dq + \int_{\Gamma_2^*} [O_q^\rp]_{0\alpha,0\alpha}dq\biggr).
\label{e:ms_integral}
\end{align}
\end{thm}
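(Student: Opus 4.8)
The plan is to reduce both identities to a single diagonal matrix-entry computation, using Theorem \ref{thm:ergodicunfolding} to convert the right-hand sides into generator formulas applied to shifted hopping functions. Write $\hu \in \UnderlyingSpace^\cf_\herm$ and $\tilde\hu \in \UnderlyingSpace^\ms_\herm$ for the effective hopping functions representing the observable operators, so that as operators $O^\cf = \Gen^\cf(\hu)$ and $O^\ms = \Gen^\ms(\tilde\hu)$, and the traces are by definition
\[
\Tr\, O^\cf = \nu \sum_{j=1}^2 \int_{\Gamma_{3-j}} \tr\,[\hu_{jj}]_0(b)\,db, \qquad \Tr\, O^\ms = \nu^* \sum_{j=1}^2 \int_{\Gamma_j^*} \tr\,[\tilde\hu_{jj}]_0(q)\,dq.
\]
First I would apply Theorem \ref{thm:ergodicunfolding}, which gives $O_b^\rl = \blochmap_{\recip_b}(O^\cf) = \Gen^\rl(t_b\hu)$ and $O_q^\rp = \blochmap_{\tilde\recip_q}(O^\ms) = \Gen^\rp(t_q\tilde\hu)$. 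This expresses the diagonal entries appearing on the right-hand side of the claim purely through the real- and reciprocal-space generator formulas evaluated on the shifted hopping functions.

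Next I would compute the relevant diagonal entry. For a site $0\alpha$ in sheet $j$, the definition of $\Gen^\rl_{j\leftarrow j}$ gives $[O_b^\rl]_{0\alpha,0\alpha} = [(t_b\hu)_{jj}]_{0,\alpha\alpha}(0)$, the $(\alpha,\alpha)$ entry of the $R=0$ Fourier mode of $(t_b\hu)_{jj}$ evaluated at disregistry $0$. Since $(t_b\hu)_{jj}(q,b') = \hu_{jj}(q, b' + (-1)^{j+1}b)$ and extracting the $R=0$ mode is an average over $q \in \Gamma_1^*$ that commutes with the shift in $b'$, one obtains
\[
[O_b^\rl]_{0\alpha,0\alpha} = [\hu_{jj}]_{0,\alpha\alpha}\bigl((-1)^{j+1}b\bigr).
\]
The identical computation in reciprocal space, using $\Gen^\rp_{j\leftarrow j}$ together with $(t_q\tilde\hu)_{jj}(b,q') = \tilde\hu_{jj}(b, q'+q)$, yields $[O_q^\rp]_{0\alpha,0\alpha} = [\tilde\hu_{jj}]_{0,\alpha\alpha}(q)$; here no sign appears because $t_q$ shifts both sheets by $+q$.

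Finally I would integrate over the torus and sum over orbitals. For sheet one,
\[
\sum_{\alpha \in \A_1} \int_{\Gamma_2} [O_b^\rl]_{0\alpha,0\alpha}\,db = \int_{\Gamma_2} \tr\,[\hu_{11}]_0(b)\,db = \Tr\,[O^\cf]_{11},
\]
while for sheet two the sign $(-1)^{j+1} = -1$ is absorbed by the change of variables $b \mapsto -b$ on the torus $\Gamma_1$, recovering $\Tr\,[O^\cf]_{22}$. Multiplying by $\nu$ reproduces $\Tr\,O^\cf$, and the parallel reciprocal-space computation reproduces $\Tr\,O^\ms$ after multiplication by $\nu^*$. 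The main obstacle is not this bookkeeping but the justification that $O^\cf$ and $O^\ms$ genuinely admit effective hopping-function representations in $\UnderlyingSpace^\cf_\herm$ and $\UnderlyingSpace^\ms_\herm$ whose $R=0$ and $G=0$ modes render the defining trace integrals convergent; this is the closedness of $\OpConfig^\cf$ and $\OpConfig^\ms$ under contour integrals of products of resolvents of the constituent Hamiltonians, which rests on the exponential decay of the hopping functions and the Combes–Thomas estimates for the resolvents, ensuring the effective hopping functions still decay and the diagonal is absolutely summable. Once this representation is in hand — and it is precisely what the trace definition presupposes — the remaining steps are the algebraic Fourier-mode manipulations above.
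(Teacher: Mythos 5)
Your proposal is correct and follows essentially the same route as the paper: unfold via Theorem \ref{thm:ergodicunfolding}, identify the diagonal entry $[O_b^\rl]_{0\alpha,0\alpha}$ with the $R=0$ (resp. $G=0$) Fourier mode of the effective hopping function evaluated at the shift, and integrate against the defining trace formula. You are in fact slightly more careful than the paper's two-line argument, since you track the $(-1)^{j+1}$ sign for sheet two and absorb it by the change of variables $b\mapsto -b$ on the torus, a detail the paper's stated identity $[\hu_{jj}]_0(b)=[(\recip_b O^\cf\recip_b^*)]_{0\alpha,0\alpha'}$ elides.
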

\begin{proof}
The proof is in Section \ref{proof:representation}.
\end{proof}
Finally, we prove the equivalence of observables in all spaces:
\begin{thm}
\label{thm:observables}
For $O^\name \in \OpConfig^\name$ with hopping functions $\hu \in \UnderlyingSpace^\cf_\herm$ for real and configuration spaces, and $\tilde \hu$ for reciprocal and momentum spaces, we have
\begin{equation}
\TrLim \; O^\rp = \Tr \; O^\ms = \TrLim \; O^\rl = \Tr \; O^\cf.
\end{equation}
\end{thm}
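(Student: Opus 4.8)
The plan is to read the chain $\TrLim\,O^\rp = \Tr\,O^\ms = \TrLim\,O^\rl = \Tr\,O^\cf$ as three separate equalities, one for each leg that a path from reciprocal to configuration space traverses in the diagram of Figure \ref{figure:commutative}: the two \emph{horizontal} (ergodic) legs $\Tr\,O^\ms = \TrLim\,O^\rp$ and $\TrLim\,O^\rl = \Tr\,O^\cf$, and the single \emph{vertical} (Bloch) leg $\Tr\,O^\ms = \TrLim\,O^\rl$. Each of the three is established by converting a thermodynamic-limit average of diagonal matrix entries into an integral; the only difference between the legs is which incommensurate sum gets averaged away.

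For the horizontal legs I would start from Theorem \ref{thm:representation}, which already expresses $\Tr\,O^\cf$ as $\nu$ times a sum over sheets of $\int [O_b^\rl]_{0\alpha,0\alpha}\,db$, and likewise $\Tr\,O^\ms$ through $[O_q^\rp]_{0\alpha,0\alpha}$; it then remains only to identify these integrals with the thermodynamic traces. The key is a covariance identity $[O^\rl]_{R\alpha,R\alpha} = [O^\rl_{b(R)}]_{0\alpha,0\alpha}$, where $b(R)$ is the disregistry of the site $R$: this follows from Theorem \ref{thm:ergodicunfolding} together with the definition of the shift $t_b$, since translating the base site by a lattice vector shifts the local configuration by its disregistry. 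Incommensurability makes the disregistries $\{b(R)\}_{R \in \R_j}$ equidistribute over the torus $\Gamma_{3-j}$, so by the ergodic theorem \cite{cances2016,massatt2017} the site average $\frac{1}{\#(\R_j\cap B_r)}\sum_R [O^\rl_{b(R)}]_{0\alpha,0\alpha}$ converges to $\frac{1}{|\Gamma_{3-j}|}\int_{\Gamma_{3-j}}[O_b^\rl]_{0\alpha,0\alpha}\,db$. Splitting $\#\Omega_r$ into the two sheet densities $\#(\R_j\cap B_r)\sim \pi r^2/|\Gamma_j|$ reproduces exactly the weight $\nu$, giving $\TrLim\,O^\rl = \Tr\,O^\cf$; the reciprocal-space argument is identical and yields $\TrLim\,O^\rp = \Tr\,O^\ms$.

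For the vertical leg I would use Theorem \ref{thm:blochtransform} in the form $O^\rl = \G^* O^\ms \G$ and expand a diagonal entry against the Bloch basis. Since $\G = \mathrm{diag}(\G_1,\G_2)$ is block-diagonal in the sheet index and the interlayer blocks carry no diagonal, only the intralayer blocks contribute, and $\G_j \delta_{R\alpha}$ is the plane wave $q \mapsto |\Gamma_j^*|^{-1/2} e^{-iq\cdot R} e_\alpha$. Writing $O^\ms_{jj} = \sum_{G \in \R_{3-j}^*} [\tilde\hu_{jj}]_G(\cdot)\,T_{-G}$ and computing $\langle \G_j\delta_{R\alpha}, O^\ms_{jj}\G_j\delta_{R\alpha}\rangle$ gives
\begin{equation*}
[O^\rl]_{R\alpha,R\alpha} = \frac{1}{|\Gamma_j^*|}\int_{\Gamma_j^*} \sum_{G \in \R_{3-j}^*} \bigl([\tilde\hu_{jj}]_G(q)\bigr)_{\alpha\alpha}\, e^{iG\cdot R}\,dq.
\end{equation*}
Averaging over $R \in \R_j \cap B_r$ and summing the diagonal over $\alpha \in \A_j$, then letting $r \to \infty$, the incommensurate phase sum $\frac{1}{\#(\R_j\cap B_r)}\sum_R e^{iG\cdot R}$ tends to $\delta_{G,0}$ for every $G \in \R_{3-j}^*$ (for $G \neq 0$ one has $G \notin \R_j^*$ by incommensurability, so $G\cdot R$ equidistributes modulo $2\pi$ and a nontrivial character averages to zero over balls), collapsing the sum onto its $G=0$ term $\frac{1}{|\Gamma_j^*|}\int_{\Gamma_j^*}\tr[\tilde\hu_{jj}]_0(q)\,dq$. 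The same density bookkeeping as above, now using $|\Gamma_j|\,|\Gamma_j^*| = (2\pi)^2$, converts the combined sheet weights into $\nu^*$ and yields $\TrLim\,O^\rl = \Tr\,O^\ms$, closing the chain.

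The main obstacle is not this formal algebra but the analytic justification of the two limit interchanges hidden in the equidistribution steps: one must exchange the thermodynamic limit $r \to \infty$ with the momentum integral over $\Gamma_j^*$ and with the infinite hopping sum over $G$ (and, on the horizontal legs, with the contour integral defining the observable). This is where the decay hypotheses enter -- the exponential decay of the Fourier modes $[\tilde\hu_{jj}]_G$ in $G$ coming from analyticity, together with Combes-Thomas resolvent bounds, guarantees absolute, uniform convergence so that Fubini and dominated convergence apply, and also ensures that $\TrLim$ exists as a genuine limit rather than merely a limit superior. A secondary point to verify carefully is that the effective hopping functions of the observable $O$ -- which is a contour integral of resolvents rather than a bare Hamiltonian -- still lie in the analytic and exponentially decaying classes $\hopspace^\cf$, $\hopspace^\ms$, so that the representation and transform identities of the preceding theorems may legitimately be applied to $O$ itself.
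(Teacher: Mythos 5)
Your proposal is correct and follows essentially the same route as the paper: the two ergodic legs are handled exactly as in the paper's proof (the covariance identity $[O^\rl]_{R\alpha,R\alpha}=[O^\rl_{b(R)}]_{0\alpha,0\alpha}$ plus the ergodic theorem, then Theorem \ref{thm:representation}), and the Bloch leg rests on the same incommensurate phase-sum collapse $\frac{1}{\#\{R:|R|<r\}}\sum_R e^{iG\cdot R}\to\delta_{G,0}$. The only cosmetic difference is that the paper reaches that Fourier series in $R$ by routing the diagonal entry through $O^\rp$ and the ergodic trace identity, whereas you expand $O^\ms_{jj}$ directly in its hopping functions; both reduce to the same computation, and your closing caveats about limit interchanges and about observables retaining hopping-function representations are the right ones to flag.
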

\begin{proof}
The proof is in Section \ref{proof:observables}.
\end{proof}
}

\subsection{Hamiltonian with Mechanical Relaxation Effects}
\label{sec:relax}

Mechanical relaxation occurs when the atoms relax from their perfect homogenous configurations to minimize their energy due to the presence of the neighboring layer (see Figure \ref{fig:relaxation}). These effects are of relevance in the regime where there is a large moir\'{e} pattern, which occurs when the two lattices are nearly aligned.
\begin{figure}[ht]
\centering
\includegraphics[width=.6\textwidth]{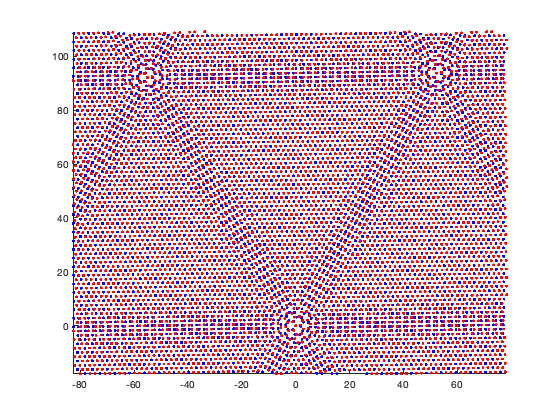}
\caption{Atomic positions for a twisted bilayer of MoS$_2$ ($\theta = 3^\circ$) after mechanical relaxation. The relaxation was computed according to the model in Ref.~\cite{relaxphysics18}. To generate this image, the interlayer coupling strength is exaggerated by a factor of $100$ so that the displacement effects are easily visible even at this relatively large twist angle.}
\label{fig:relaxation}
\end{figure}
\begin{definition}
We define the incommensurate Brillouin Zone 
\[\Gamma_{21}^* := \dm{2\pi(A_2^{-T}-A_1^{-T})[0,1)^2}\]
with lattice matrix
\[
\Theta_{21} = 2\pi (A_2^{-T}-A_1^{-T}).
\]
\end{definition}
\begin{assumption}
\label{assump:small}
We assume the inverse moir\'{e} scale is small, which we denote
\begin{equation*}
\theta :=  \|\Theta_{21}\|_\op \ll 1.
\end{equation*}
\end{assumption}
Each lattice site $R_j \in \R_j$ will be displaced. The assumption made in \cite{relaxphysics18,cazeaux2018energy} is that this displacement will be regular in configuration, which is a reasonable approximation given that the local geometry varies smoothly in shift. 
We  consider displacement fields of each sheet 
\begin{align}
    &u_1 : \Gamma_{2} \rightarrow \mathbb{R}^2,\\
    &u_2 : \Gamma_{1} \rightarrow \mathbb{R}^2.
\end{align}
 The orbital dependence is important in the modeling, as the orbitals have different spatial locations, and thus configurations. \dm{We also will use the periodic extensions of $u_1$ and $u_2$ so they can be defined over the domain $\mathbb{R}^2$.}  To describe how $R_j \in \R_j$  changes position under mechanical relaxation, one simply uses the mapping
\begin{equation*}
R_j \mapsto R_j + u_{j}(R_j).
\end{equation*}
To find derivations and modeling of the $u_j$'s, see \cite{relaxphysics18,cazeaux2018energy}. We also outline the modeling in Appendix \ref{app:relax}. \dm{We reiterate here that this displacement field is taken as an input to our model, and is not calculated in this work.}

\dm{We let $h^{ij}$ be the tight-binding coupling function between sites on sheet $i$ and $j$ dependent on the vector distance between sites. The real space Hamiltonian has matrix elements:
\begin{align*}
&H_{R\alpha,R'\alpha'} = [h^{11}]_{\alpha\alpha'}(R + u_1(R) - R' - u_1(R')), & R\alpha,R'\alpha' \in \Omega_1, \\
&H_{R\alpha,R'\alpha'} = [h^{22}]_{\alpha\alpha'}(R + u_2(R) - R' - u_2(R')), & R\alpha,R'\alpha' \in \Omega_2, \\
&H_{R\alpha,R'\alpha'} = [h^{21}]_{\alpha\alpha'}(R + u_2(R) - R' - u_1(R')), & R\alpha \in \Omega_2,R'\alpha' \in \Omega_1,\\
&H_{R\alpha,R'\alpha'} = [h^{12}]_{\alpha\alpha'}(R + u_1(R) - R' - u_2(R')), & R\alpha \in \Omega_1,R'\alpha' \in \Omega_2.
\end{align*}
Our next task is to interpret this Hamiltonian in configuration space so we can write the hopping functions $\hu \in \hopspace_\herm^\cf$.
 We start by simple algebraic manipulations of the intralayer hoppings
\begin{equation*}
[h^{11}]_{\alpha\alpha'}(R + u_1(R) - R' - u_1(R')) = [h^{11}]_{\alpha\alpha'}(R-R' + u_1(R )- u_1(R - (R - R'))
\end{equation*}
and interlayer hoppings
\begin{equation*}
[h^{21}]_{\alpha\alpha'}(R + u_2(R) - R' - u_1(R'))=[h^{21}]_{\alpha\alpha'}(R-R' + u_2(R-R') - u_1(-(R-R'))).
\end{equation*}
Parallel equations hold for the other sheet couplings, and so we readily find
\begin{align*}
&h_R^1(b) = h^{11}( R + u_1(b) - u_1(b-R)), & R \in \R_1, \\
&h_R^2(b) = h^{22}( R + u_2(b) - u_2(b-R)), & R \in \R_2.
\end{align*}
We then get the underlying functions $\hu$ by:
\begin{align*}
&\hu_{11}(q,b) = \sum_{R \in \R_1} e^{iq \cdot R} h_R^1(b), \\
&\hu_{22}(q,b) = \sum_{R \in \R_2} e^{iq \cdot R} h_R^2(b), \\
&\hu_{21}(x) = h^{21}(x + u_2(x) - u_1(-x)), \\
&\hu_{12}(x) = h^{12}(x + u_1(x) - u_2(-x)).
\end{align*}
Hence we have the mechanically relaxed tight-binding model
\[
H = \Gen^\rl(\hu).
\]
To consider a momentum or reciprocal space formulation, we now only need to construct $\tilde \hu$ and use the above procedure to understand how to populate the momenta hopping terms.
}

\begin{figure}[ht]
\includegraphics[width=.8\textwidth]{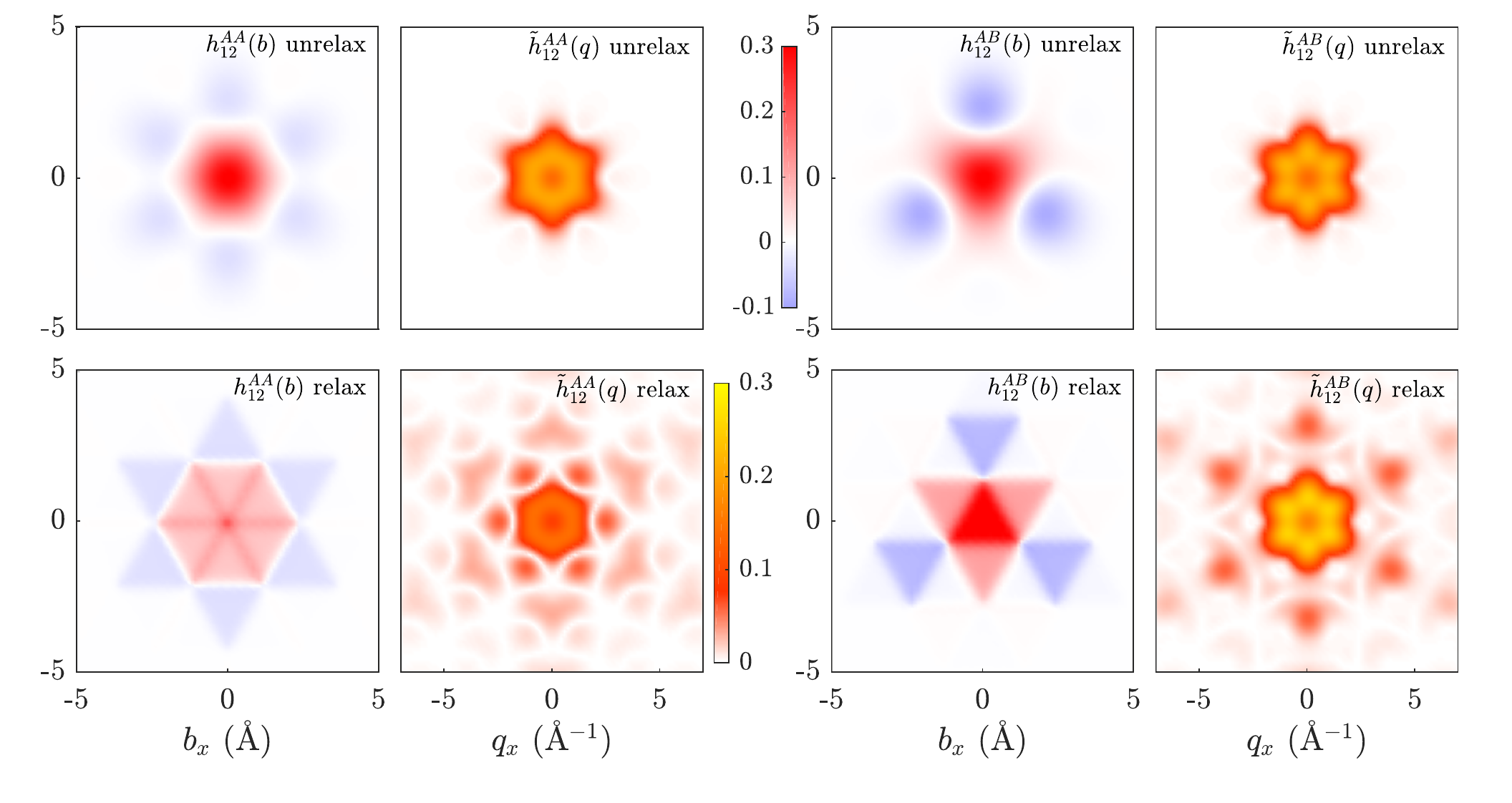}
\caption{ Here we show the interlayer coupling \dm{of TBG} for a small twist angle $\theta = 0.3^\circ$ in both real and momentum space. $A$ and $B$ here refer to the two orbitals associated with a sheet of graphene, i.e. orbital index sets $\A_1$ and $\A_2$ each consist of orbitals $A$ and $B$. The first column is AA coupling in real space, while the second column is the magnitude of AA coupling in momentum space. Similarly for columns three and four, but for AB coupling. Across all four columns, the top row is without relaxation while the bottom row is with relaxation.}
\label{fig:coupling}
\end{figure}

\section{Numerical Method}
\label{sec:algorithm}

The principle gain of using momentum space that we outline here is that all interlayer and mechanical relaxation effects are arising from weak Van der Waals forces, which allows us to consider them as a form of perturbation to the two monolayer structures. This perturbative idea must be treated with great care, however, as the perturbative effect arises from the relationship between energies and momenta in the monolayer band structures, the short hopping distance in momenta due to the inverse moir\'{e} scale being small, and the hopping strength and range along the reciprocal lattices. We outline this section as follows: first we will discuss the energy and momenta relationship arising from the monolayer band structure. Then we will construct the appropriate truncation of the discrete operators $\Gen^\rp(t_q\tilde \hu_j)$ to generate a set of finite matrices $H_j(q)$. If we have an observable $O^\ms$  represented by $(g, \Gen^\ms(\tilde \hu_1),\cdots \Gen^\ms(\tilde \hu_n))$, then we will be able to write our approximation of $\Tr\, O^\ms$ motivated by \eqref{e:ms_integral} as an integral over the incommensurate Brillouin Zone instead of the two reciprocal lattice unit cells $\Gamma_1^*$ and $\Gamma_2^*$, and with finite matrices $H_j(q)$ replacing $\Gen^\rp(t_q\tilde \hu_n)$. We will then conclude with the convergence result of observables.

A key point here is that the spectral information coming from a Hamiltonian $H$ is now captured by a corresponding family of finite matrices $H(q)$ for $q \in \dm{\Gamma^*_{21}}$, which instantly gives a band structure representation. This provides the traditional physical insight into the relationship between energy and \dm{momenta} in a crystalline material. Most observables can be computed from the spectral information of a single Hamiltonian, so for the rest of the section we will assume we are considering a single Hamiltonian $H = \Gen^\rl(\hu)$. We conclude the section with a numerical convergence result for the energy eigenstates in the center of the energy window of interest, and comment on how to approach general observables. We do not derive a general convergence result for observables, as the convergence is modified by both the observable's dependence on spectral properties and the energy landscape of the monolayer band structures.

\subsection{Energy and Momentum Selection}

We already assumed a long moir\'{e} length scale. Our next assumption is that we are only interested in a specific range of energies, which typically will be a small subset of the monolayer Hamiltonians' spectra. We call $H_1$ and $H_2$ the unrelaxed periodic monolayer Hamiltonians over $\X_1^\rl$ and $\X_2^\rl$ respectively. We denote their momentum space counterparts $\mon_1(q)$ and $\mon_2(q)$ as multiplication operators over $\X_1^\ms$ and $\X_2^\ms,$ respectively. Let $\sigma_j(q)$ be the set of eigenvalues of $\mon_j(q)$. We denote 
\[ \mon = \begin{pmatrix} \mon_1 & 0 \\ 0 & \mon_2\end{pmatrix}\]
as the two monolayer operators defined over $\X^\ms$. Let
\begin{equation}
    \energy \subset \mathbb{R}
\end{equation}
be an open set that corresponds to the energy region of interest. To focus on this energy region with relaxation (or phonon) and interlayer coupling effects, however, it is more convenient to expand the energy region we consider by a width equal to something a little larger than \dm{twice} the strength of the coupling, which we denote
\begin{equation}
\label{e:eta}
\eta := (2+\alpha)\| \Gen^\rp(\tilde \hu) - \Gen^\rp(\mon)\|_\op.
\end{equation}
Here $\alpha > 0$ so that $\eta$ is larger than twice the coupling strength. 
We denote the ball in $\mathbb{R}$ of radius $\eta$ as $B_\eta$. Then our extended energy region is $\energy + B_\eta$.
We find there are corresponding momenta regions in $\Gamma_1^*$ and $\Gamma_2^*$ that correspond to this energy region:
\begin{equation}
\Gamma_j^*(\energy+B_\eta) = \{ q \in \Gamma_j^* : \sigma_j(q) \cap (\energy{+} B_\eta) \neq \emptyset\}.
\end{equation}
As an example of the energy and momenta relationship, see Figure \ref{fig:energy_momenta}.
\begin{figure}[ht]
\includegraphics[width=.8\textwidth]{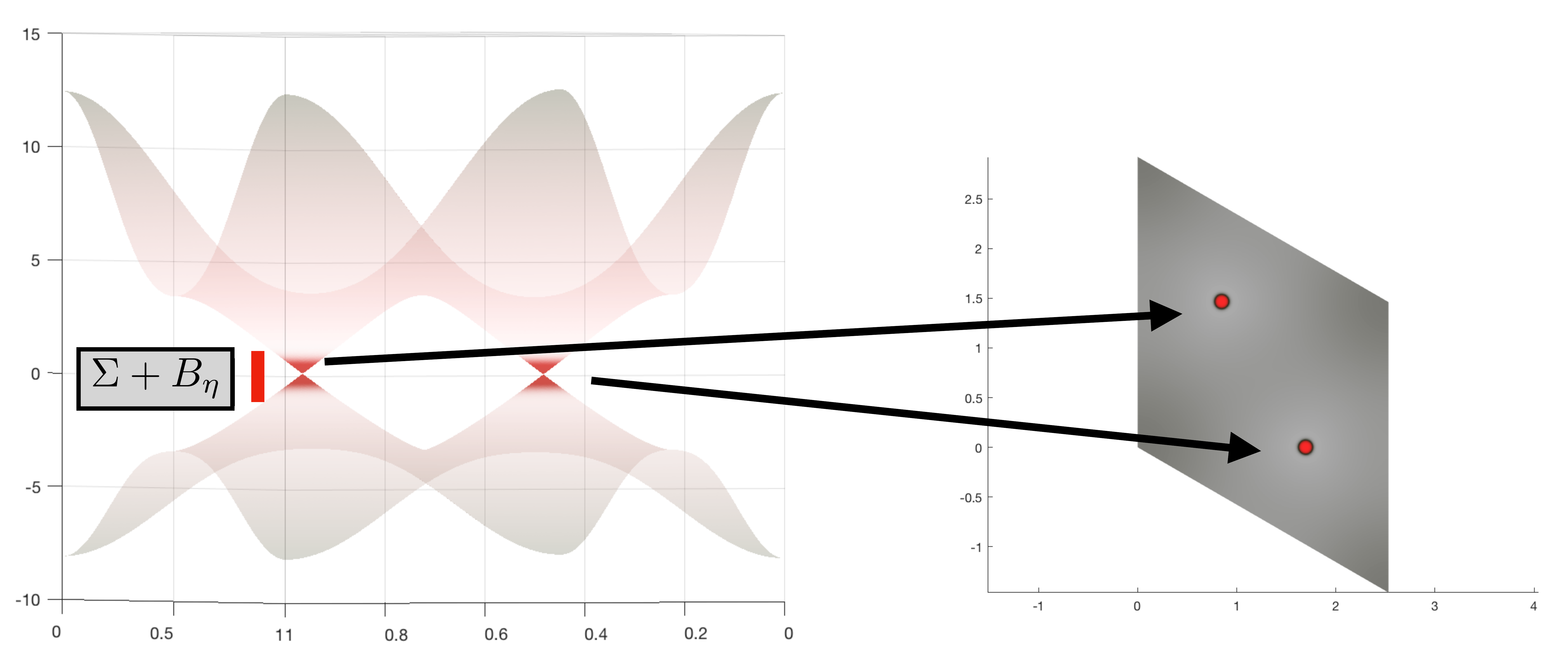}
\caption{ For a two-band graphene model, we show the relationship between an energy region $\energy$ and momenta for the energy region near the Fermi energy. On the left is a graphene monolayer band structure, and on the right is the reciprocal lattice unit cell of monolayer graphene. The highlighted region on the right corresponds to $\Gamma_1^*(\energy + B_\eta).$}
\label{fig:energy_momenta}
\end{figure}

\subsection{Reciprocal Space Truncation} 

For the numeric study of observables and related objects, it is also useful to define spaces of operators with a specified exponential decay rate in their hopping functions.
We define a secondary space $\OpConfig^\cf_\herm(\gamma,\tilde \gamma) \subset \OpConfig_\herm^\cf$  to be operators with hopping functions satisfying:
\begin{align*}
&\bigl|[\hu_{jj}]_{RG}\bigr| \leq c e^{-\gamma |R| -\tilde \gamma |G|}, 
&|\hu_{ij}(x)| \leq ce^{-\gamma |x|}, \hspace{1.6cm} |\hat\hu_{ij}(\xi)| \leq  ce^{-\tilde \gamma |\xi|},
\end{align*} 
for some $c>0$. The $\gamma$ controls the decay in orbital hopping distance, while $\tilde \gamma$ controls the regularity of the hoppings as a function of configuration.
We likewise define $\OpConfig^\ms_\herm(\tilde \gamma, \gamma) \subset \OpConfig_\herm^\ms$ with hopping functions $\tilde \hu$ by
\begin{align*}
&\bigl|[\tilde \hu_{jj}]_{GR}\bigr| \leq c e^{-\tilde \gamma |G| - \gamma |R|}, 
&|\tilde \hu_{ij}(\xi)| \leq ce^{-\tilde  \gamma |\xi|}, \hspace{1.6cm} |\check{\tilde \hu}_{ij}(x)| \leq  ce^{- \gamma |x|}.
\end{align*}
The $\gamma$ controls regularity of the hoppings in terms of momenta, while the $\tilde \gamma$ controls hopping distance along the reciprocal lattices.
Here $\check{\tilde \hu}_{ij}$ refers to inverse Fourier transform of $\tilde \hu_{ij}$. To better understand how the twist angle affects the regularity, we define 
\begin{equation*}
    \begin{split}
\OpConfig_\varepsilon^\name&(\gamma_1,\gamma_2) = \\
&\biggl\{ \int_\contourprod g(z)\prod_{j=1}^n(z_j-H_j)^{-1}dz : n > 0, \; H_1,\cdots H_n \in \OpConfig^\name_\herm(\gamma_1,\gamma_2), \; C_j \in \Contour_\varepsilon(H_j),\; g \in \Lambda(\contourprod)\biggr\}.
    \end{split}
\end{equation*} 
It is observed in \cite{relaxphysics18,cazeaux2018energy} that relaxation in configuration space becomes sharper proportional to $\theta^{-1}$. \dm{We assume} there exists $\gamma,\tilde \gamma > 0,$ independent of $\theta,$ such that
\begin{align}
\label{e:relax_hop1}
 & \Gen^\cf(\hu) \in \OpConfig^\cf(\gamma , \tilde \gamma\theta),\\
 & \Gen^\ms(\tilde \hu) \in \OpConfig^\ms(\tilde\gamma\theta,\gamma).
 \label{e:relax_hop2}
\end{align}
\dm{This assumption is motivated by \cite{relaxphysics18}.}
In other words, configuration space methods suffer a loss of regularity with respect to configuration, while momentum space suffers with slower reciprocal space localization (see Figure \ref{fig:coupling} for interlayer hopping functions in momentum and configuration spaces with and without mechanical relaxation effects).

When considering $\Gen^\rp(t_q\tilde \hu)$, we note each basis element of $\X^\rp$, denoted $G\alpha$, is associated with a wavenumber $q+G$. It likewise has a corresponding energy set $\sigma_j(q+G)$ where $j$ is the sheet index corresponding to site $G\alpha$. We define for an energy region $A \subset \mathbb{R}$ and $r > 0$ a corresponding set of basis elements:
\[
\Omega^*_r(q, A) = \{ G\alpha \in \Omega_j^* : q + G \in \Gamma_j^*(A)+B_r, \; j = 1,2 \}.
\]
Here we consider $\Gamma_j^*(A)+B_r$ as a subset of the torus $\Gamma_j^*$, and so $q+G$ is modulated by the torus $\Gamma_j^*$ in the definition above. For $U \subset V \subset \Omega^*$, we define
\[
J_{V \leftarrow U} : \ell^2(U) \rightarrow \ell^2(V)
\]
Choice of $r$ controls our accuracy.
As seen in Figure \ref{fig:dof_space}, for appropriate systems, $\Omega_r^*(q, \energy + B_\eta)$ consists of isolated regions. Regions can be defined as connected or isolated over reciprocal space, but here we skip the technical definition as we consider it reasonably intuitive, and simply cite \cite{momentumspace17} for the details of connectedness.
We define $\mathcal{B}(\Omega^*)$ as the collection of subsets of $\Omega^*$. Then we define $I : \mathcal{B}(\Omega^*) \rightarrow \mathcal{B}(\Omega^*)$ as the operation that maps a subset of $\Omega^*$ to its isolated degrees of freedom  containing a site $0\alpha$ for any $\alpha$. As an example, see the circled red region in Figure \ref{fig:dof_space} corresponding to $I\bigl( \Omega_r^*(q,\energy+B_\eta)\bigr)$.

We will let $\tau$ be a truncation in hopping distance on $\Gen^\rp(t_q\tilde \hu)$ by using new hopping functions
\begin{align*}
& \tilde \hu^{(\tau)}_{11}(b,q)= \sum_{G \in \R_2^* \cap B_{\tau}} h_{G}(q)e^{ i b \cdot G}, \\
& \tilde \hu^{(\tau)}_{22}(b,q)= \sum_{G \in \R_1^* \cap B_{\tau}} h_{G}(q) e^{ i b \cdot G}, \\
& \tilde \hu^{(\tau)}_{ij}(\xi) = \chi_\tau(\xi) \tilde \hu_{ij}(\xi).
\end{align*}
Here $\chi_\tau(x)$ is smooth and $1$ on $[-\tau+1/2,\tau-1/2]$, and compactly supported on $[-\tau,\tau]$.
With $\tau$ selected, we have the new Hamiltonian $\Gen^\rp(t_q\tilde \hu^{(\tau)})$. We note this operator does not actually exist in $\OpConfig^\rp$ as the hopping functions are not in $\hopspace^\ms_\herm$. To generate a finite matrix corresponding to the energy window of interest, we consider
\begin{align}
&\dof_{r}(q) = I\bigl(\Omega_r^*(q,\energy+B_\eta)\bigr),\\
& J_{r}(q) = J_{ \Omega^* \leftarrow \dof_r(q) },\\
& H_r(q) = J_{r}^*(q) \Gen^\rp(t_q\tilde \hu^{(\tau)}) J_{r}(q).
\end{align}
This is a finite matrix over the basis space ${I\bigl(\Omega_r^*(q,\energy+B_\eta) \bigr)}$ as long as the region selected by $I$ was an isolated collection of a {finite} number of elements. This is where the relationship between the energy window of interest and the monolayer band structure becomes vital. If {$\Gamma_j^*(\energy+B_\eta)+B_r \subset \Gamma_j^*$} is not homotopically trivial, then {$\Omega_r^*(q,\energy+B_\eta)$} is not finite for $q \in \Gamma_j^*(\energy+B_\eta)+B_r$. We note that if $q \not\in {\Gamma_j^*(\energy+B_\eta)+B_r}$, then {$\Omega_r^*(q)$} is empty.

\begin{figure}[ht]
\begin{subfigure}{.5\textwidth}
\includegraphics[width=.9\textwidth]{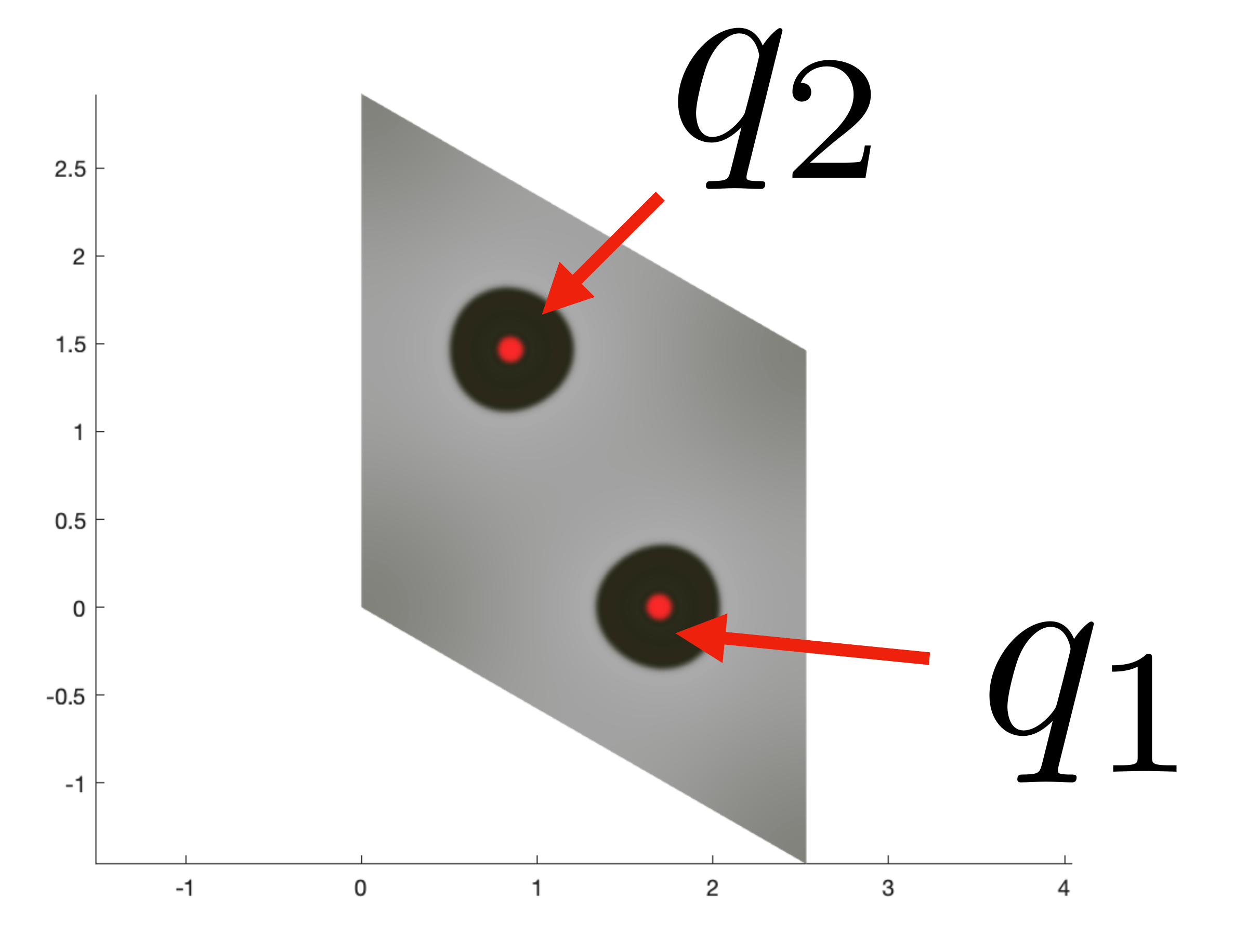}
\caption{\dm{The central regions are {$\Gamma_1^*(\energy + B_\eta)$}, and adding the dark regions around them we obtain {$\Gamma_1^*(\energy + B_\eta) + B_r$}. Here $q_1$ and $q_2$ in the central regions are considered starting points. }}
\vspace{.9cm}
\label{fig:extended_bz}
\end{subfigure}
\hspace{5mm}
\begin{subfigure}{.45\textwidth}
\includegraphics[width=.9\textwidth]{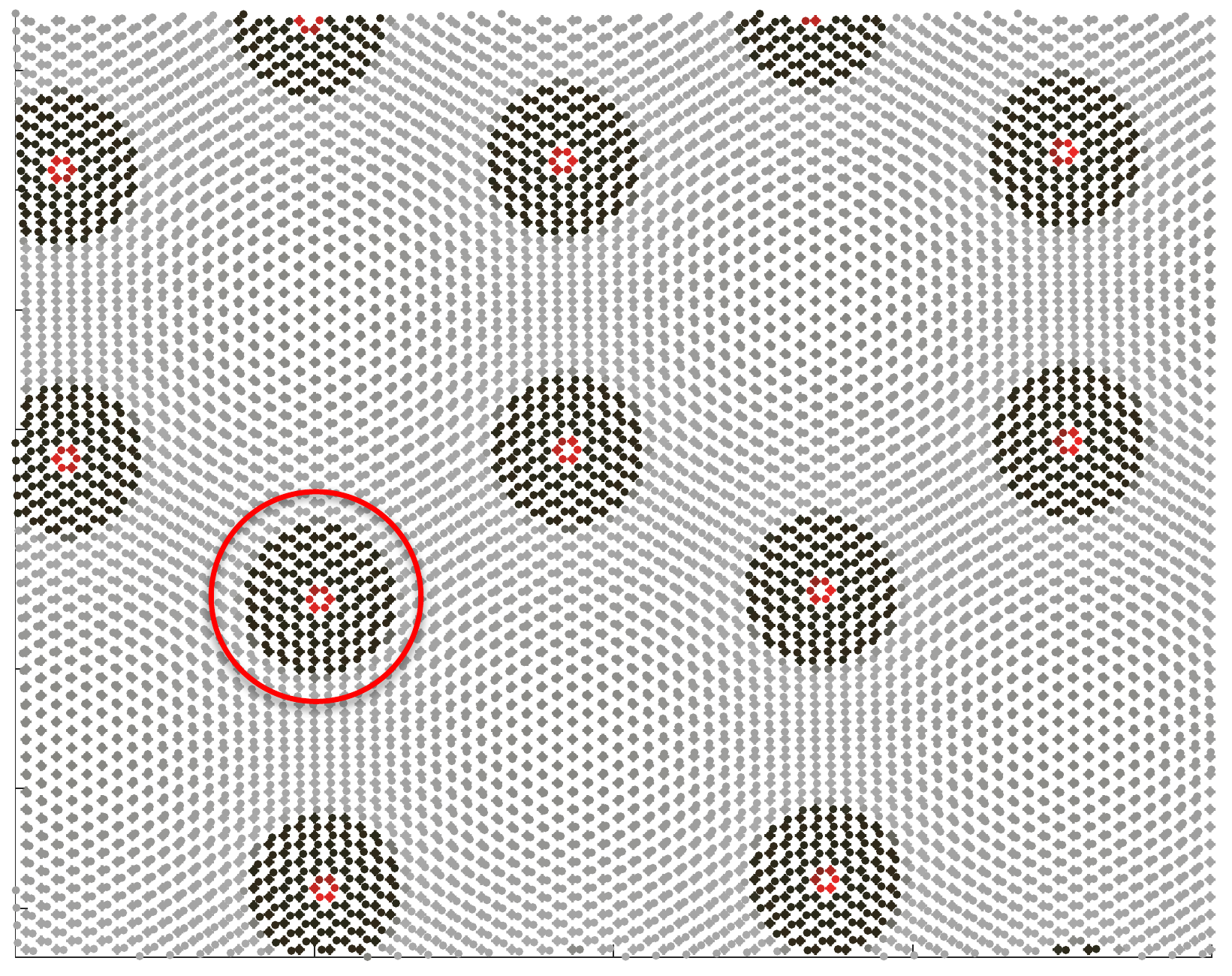}
\caption{Here we plot reciprocal space with each reciprocal lattice site color-coded according to the corresponding wavenumber region. The central circled region is the region corresponding to a starting point \dm{$q_1$ containing lattice site $0$}.}
\label{fig:reciprocal_space}
\end{subfigure}
\caption{Momentum space and reciprocal space correspondence for twisted bilayer graphene. }
\label{fig:dof_space}
\end{figure}

We observe the following symmetry, which is useful for recognizing the set of starting momenta of interest is the incommensurate Brillouin Zone:
\begin{prop}
\label{prop:inc_BZ}
If $G_j = (-1)^{j}2\pi A_j^{-T} n$ for $n \in \mathbb{Z}^2$, $G_j' \in \R_j^*$ and $\tilde G_\ell \in \R_\ell^*$, and 
\[
\delta q = \Theta_{21}n = 2\pi (A_2^{-T} - A_1^{-T})n,
\]
then
\[
[H_r(q)]_{(G_j+G_j')\alpha,(G_\ell + \tilde G_\ell)\alpha'} = [H_r(q+\delta q)]_{G_j'\alpha,\tilde G_\ell\alpha'},
\]
as long as $q, q+\delta q \in { \Gamma_j^*(\energy+B_\eta)+B_r}$ for $j = 1,2$.
\end{prop}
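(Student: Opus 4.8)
The plan is to reduce everything to a translation-covariance identity for the full operator and only then check index-set compatibility. Because $H_r(q)=J_r^*(q)\,\Gen^\rp(t_q\tilde\hu^{(\tau)})\,J_r(q)$ is just the principal submatrix of $\Gen^\rp(t_q\tilde\hu^{(\tau)})$ on the index set $\dof_r(q)$, it suffices to show that the simultaneous relabeling $q\mapsto q+\delta q$, with each row (resp.\ column) index shifted by the sheet-appropriate vector $G_j$ (resp.\ $G_\ell$), leaves every matrix entry unchanged, and then that this relabeling is a bijection between $\dof_r(q+\delta q)$ and $\dof_r(q)$. I would split the entry computation into the intralayer blocks ($j=\ell$) and the interlayer blocks ($j\neq\ell$), since $\Gen^\rp$ treats them by different formulas.

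\emph{Intralayer blocks.} With both orbitals on the sheet of reciprocal lattice $\R_j^*$, the definitions of $\Gen^\rp$ and $t_q$ give the entry $[\tilde\hu_{jj}^{(\tau)}]_{G-G'}(q+G)$: a configuration-variable Fourier mode evaluated at the wavenumber $q+G$, and these mode functions are periodic under the opposite reciprocal lattice ($\R_1^*$ for the sheet with lattice $\R_2^*$, and conversely). Shifting both indices by the same $G_j$ preserves the Fourier index $G-G'$, hence preserves the truncation status $|G-G'|\le\tau$, while moving the evaluation point by $G_j$. The one arithmetic fact needed is that $G_j-\delta q$ lies in that period lattice, i.e.\ $G_2-\delta q=2\pi A_1^{-T}n\in\R_1^*$ and $G_1-\delta q=-2\pi A_2^{-T}n\in\R_2^*$; periodicity then identifies $q+(G_j+G')$ with $(q+\delta q)+G'$ as evaluation points and closes this case.

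\emph{Interlayer blocks.} Here $j\neq\ell$ and the entry is $c_1^*c_2^*\,\tilde\hu_{ij}^{(\tau)}(q+G+G')$, with the truncation acting only through the factor $\chi_\tau$ evaluated at the same argument. No periodicity is required: the simultaneous index shift moves the argument by $G_j+G_\ell$, and since the two sheets are distinct this is $G_1+G_2=2\pi(A_2^{-T}-A_1^{-T})n=\delta q$, exactly cancelling the momentum shift. Both sides therefore evaluate $\tilde\hu_{ij}^{(\tau)}$ at the identical point, and the identity holds verbatim.

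\emph{Index sets and the main difficulty.} Finally I would verify that the relabeling $\phi$ carrying $G'\alpha$ at $q+\delta q$ to $(G_s+G')\alpha$ at $q$ (with $G_s$ the shift for the sheet of $\alpha$) maps $\dof_r(q+\delta q)$ onto $\dof_r(q)$. The same modular arithmetic shows $\phi$ preserves membership in $\Omega_r^*(\cdot,\energy+B_\eta)$, because $q+\delta q+G'$ and $q+(G_s+G')$ coincide modulo the sheet's reciprocal lattice; the hypothesis $q,q+\delta q\in\Gamma_j^*(\energy+B_\eta)+B_r$ ensures the origin sites persist so both sets are nonempty. The entry identity further makes $\phi$ an isomorphism of the coupling graphs, so it sends isolated components to isolated components, which is what lets the selection $I$ transform consistently. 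I expect the genuinely delicate step to be this last one: confirming that $\phi$ matches the particular isolated cluster picked out by $I$ at $q+\delta q$ (the one through an origin site) with the cluster picked out at $q$, since this is the only place the argument leans on the geometry of the energy-selected momentum regions rather than on the algebra of the hopping functions.
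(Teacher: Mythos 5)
Your proposal is correct and follows essentially the same route as the paper's proof: the same split into intralayer and interlayer blocks, with the intralayer case resting on $G_2-\delta q=2\pi A_1^{-T}n\in\R_1^*$ (resp.\ $G_1-\delta q\in\R_2^*$) together with periodicity of $[\tilde\hu_{jj}]_{G}(\cdot)$ in its momentum argument, and the interlayer case on $G_1+G_2=\delta q$. The paper's proof consists only of these two entry computations; your additional verification that the relabeling bijects $\dof_r(q+\delta q)$ onto $\dof_r(q)$ and respects the isolated component selected by $I$ is a point the paper leaves implicit, and your treatment of it is sound.
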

\begin{proof}
Proof is in Section \ref{proof:prop}.
\end{proof}
In other words, $H_r(q)$ is an identical matrix when shifted along the lattice  defined by the incommensurate Brillouin Zone, aside from a simple relabeling of the basis elements. As a consequence, we will select one momenta per isolated region \dm{labeled} $q_1,\cdots q_n$. In the case of bilayer gaphene, $n = 2$ and $q_1$ and $q_2$ correspond to the two unique Dirac points. The two corresponding Dirac cones are often \refchange{referred} to as the two ``valleys,'' and they are related by a time-reversal symmetry. We first consider convergence of the density of states, as it is the simplest observable. The density of states is approximated by the following for \dm{spectral} resolution $\varepsilon$:

\begin{align}
& D_\varepsilon(E) = \TrLim \; \phi_\varepsilon(E - \Gen^\rl(\tilde \hu)),\\
& \phi_\varepsilon(E) = \frac{1}{\sqrt{2\pi\varepsilon}}e^{-E^2/2\varepsilon^2}.
\end{align}
\begin{thm}
\label{thm:convergence}
Consider incommensurate bilayer system as described above with long moir\'{e} length scale. Consider $E \in \energy$, and $\varepsilon \ll 1$. Let $\tau > 0$ be a hopping truncation. Then there are constants $\gamma_h$, $\gamma_m$, and $\gamma_g$ corresponding to hopping truncation error, momenta truncation error, and Gaussian decay rates respectively such that
\begin{equation}
    \biggl| D_\varepsilon(E) - D_{\varepsilon,r}(E) \bigg| \lesssim \varepsilon^{-3/2}(e^{-  \gamma_h  \tau} + \varepsilon^{-2}e^{ - \gamma_m r}+ e^{- \gamma_g \varepsilon^{-2}}),
\end{equation}
where 
\[
D_{\varepsilon,r}(E) = \dm{\sum_{j=1}^n\nu^*\int_{\Gamma_{21}^*+q_j} \Tr \; \phi_\varepsilon(E-H_r(q))dq.}
\]
When mechanical relaxation effects are not included, i.e.,
\begin{equation}
\Gen^\ms(\tilde \hu) \in \OpConfig^\ms(\tilde \gamma,  \gamma),
\end{equation}
then we have $\gamma_h$ is independent of $\theta$, but $ \gamma_m = O( \theta^{-1})$. Meanwhile if mechanical relaxation effects are included, i.e., 
\begin{equation}
\Gen^\ms(\tilde \hu) \in \OpConfig^\ms(\tilde \gamma \theta,  \gamma),
\end{equation}
then we have $\gamma_h = O(\theta)$ and $\gamma_m = O(1)$.
\end{thm}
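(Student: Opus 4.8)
The plan is to push everything into reciprocal/momentum space, where the truncations defining $D_{\varepsilon,r}$ actually live, and then split the error into the three advertised sources. By Theorem \ref{thm:observables} together with the local-sampling formula of Theorem \ref{thm:representation}, I would first rewrite
$$D_\varepsilon(E)=\Tr\,\phi_\varepsilon(E-\Gen^\ms(\tilde\hu))=\nu^*\sum_{j}\sum_{\alpha\in\A_j}\int_{\Gamma_j^*}\bigl[\phi_\varepsilon(E-\Gen^\rp(t_q\tilde\hu))\bigr]_{0\alpha,0\alpha}\,dq,$$
and fold the $\Gamma_j^*$ integrals onto copies of the incommensurate Brillouin zone $\Gamma_{21}^*+q_j$ using the lattice symmetry of Proposition \ref{prop:inc_BZ}. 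Since the Gaussian $\phi_\varepsilon$ is Schwartz but neither compactly supported nor analytically decaying, I would represent $\phi_\varepsilon(E-H)$ through a Helffer--Sjöstrand formula with an almost-analytic extension $\tilde\phi_\varepsilon$, so that every subsequent bound reduces to a weighted estimate on the resolvent $(z-H)^{-1}$ for $z$ near $\mathbb{R}$; the derivatives of $\tilde\phi_\varepsilon$ are exactly what produce the negative powers of $\varepsilon$ in the final estimate.

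Second, I would isolate the three terms by telescoping the operator inside $\phi_\varepsilon$: from $\Gen^\rp(t_q\tilde\hu)$ to $\Gen^\rp(t_q\tilde\hu^{(\tau)})$ (hopping truncation) to $H_r(q)$ (reciprocal-space truncation), together with restricting the $q$-integral to momenta whose cluster $\dof_r(q)$ meets the window. The Gaussian-tail term $e^{-\gamma_g\varepsilon^{-2}}$ comes from this last restriction: for momenta whose monolayer energies $\sigma_j(q+G)$ lie outside $\energy+B_\eta$, the choice $\eta=(2+\alpha)\|\Gen^\rp(\tilde\hu)-\Gen^\rp(\mon)\|_\op$ guarantees, by a norm-perturbation and spectral-mapping argument, that the full spectrum there stays a fixed distance $\gtrsim\eta$ from $E\in\energy$, so the Gaussian weight is bounded by $e^{-\gamma_g\varepsilon^{-2}}$. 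The hopping term $e^{-\gamma_h\tau}$ follows from $\|\Gen^\rp(t_q\tilde\hu)-\Gen^\rp(t_q\tilde\hu^{(\tau)})\|_\op\lesssim e^{-\gamma_h\tau}$, read directly off the decay assumptions \eqref{e:relax_hop1} and \eqref{e:relax_hop2} and fed into the Helffer--Sjöstrand representation; here $\gamma_h$ is precisely the reciprocal-space decay rate of $\tilde\hu$, namely $\tilde\gamma$ (no $\theta$) in the unrelaxed case and $\tilde\gamma\theta$ in the relaxed case, which is the stated scaling of $\gamma_h$.

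Third, and this is the technical heart, I would obtain the truncation term $\varepsilon^{-2}e^{-\gamma_m r}$ by a Combes--Thomas estimate on the finite-range operator $\Gen^\rp(t_q\tilde\hu^{(\tau)})$, conjugating the resolvent by a weight $e^{\lambda w(\cdot)}$ that measures reciprocal-space distance from the window $\Gamma_j^*(\energy+B_\eta)$; restricting to $\dof_r(q)=I\bigl(\Omega_r^*(q,\energy+B_\eta)\bigr)$ then costs $e^{-\gamma_m r}$ with $\gamma_m$ the largest admissible rate $\lambda$, while the extra $\varepsilon^{-2}$ records the degradation of that rate as the Helffer--Sjöstrand contour approaches the axis. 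The sharp $\theta$-dependence is where the monolayer dispersion must be combined with the coupling geometry: unrelaxed, the interlayer coupling is short-ranged in reciprocal space yet links momenta separated only by moiré vectors of scale $\theta$, so escaping the window over an absolute reciprocal distance $r$ requires $\sim r/\theta$ steps each contributing an $O(1)$ energy-denominator suppression, giving $\gamma_m=O(\theta^{-1})$; relaxed, the coupling spreads over a reciprocal range $\sim\theta^{-1}$, so a single step can leave the window and the per-distance rate degrades to $\gamma_m=O(1)$.

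The main obstacle I anticipate is exactly this last step: making the weight $w$ and its rate quantitative enough to extract the sharp $O(\theta^{-1})$ versus $O(1)$ behavior of $\gamma_m$, while simultaneously controlling that $\dof_r(q)$ is a genuinely isolated finite cluster (so that $H_r(q)$ is the correct finite matrix) and that the resolvent localization survives the non-analytic Gaussian through the Helffer--Sjöstrand bookkeeping of $\varepsilon$-powers. Assembling the three bounds, integrating the trace over the finitely many folded cells $\Gamma_{21}^*+q_j$, and collecting the worst prefactor $\varepsilon^{-3/2}$ then yields the claimed estimate.
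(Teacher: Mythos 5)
Your proposal follows the same overall architecture as the paper's proof: fold onto $\Gamma_{21}^*$ via Proposition \ref{prop:inc_BZ}, telescope the error into hopping truncation, momentum truncation, and a spectral tail, and trace each $\theta$-dependence back to the decay rates in \eqref{e:relax_hop1}--\eqref{e:relax_hop2} and to the number of reciprocal-lattice hops needed to traverse a momentum distance $r$. Where you differ is in the two technical engines. For the functional calculus you use Helffer--Sj\"ostrand with an almost-analytic extension of the Gaussian; the paper instead exploits that $\phi_\varepsilon$ is entire and integrates $g(z)(z-H)^{-1}$ over a contour $C$ at distance $\varepsilon$ to $2\varepsilon$ from the spectrum, splitting $C=C_+\cup C_-$ so that the $C_-$ piece directly yields the $e^{-\gamma_g\varepsilon^{-2}}$ term. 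For the resolvent localization you invoke Combes--Thomas with an exponential weight; the paper builds a ring decomposition $U_0,\dots,U_n$ of $\Omega^*_r(q)$ with $H_{ij}=0$ for $|i-j|>1$ and iterates the Schur complement, so that $J_k^*R_{0\leftrightarrow n}J_n$ becomes a product $\prod_j H_{j-1,j}R_j$ with each factor bounded by $\beta/(1+\alpha/2)<1$. The two tools are interchangeable here and both recover the stated $\theta$-scalings of $\gamma_h$ and $\gamma_m$.

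One point you should make explicit, because a naive version of your third step fails: a Combes--Thomas bound anchored to the distance from $z$ to the spectrum of the \emph{full} operator gives a decay rate of order $\varepsilon$ (since the contour sits at distance $\sim\varepsilon$), which would produce $e^{-c\varepsilon r}$ and destroy the theorem. The rate $\gamma_m$ must instead come from the $O(\eta)$ gap between $z\in C_+$ and the spectrum of the operator \emph{restricted to the outer momenta}, i.e.\ from the fact that $\sigma_j(q+G)$ stays a fixed distance $\gtrsim\eta'$ from $\energy$ once $q+G$ leaves $\Gamma_j^*(\energy+B_\eta)$ --- this is exactly why the window is enlarged by $B_\eta$ with $\eta=(2+\alpha)\|\Gen^\rp(\tilde\hu)-\Gen^\rp(\mon)\|_\op$, and it is what the paper's per-ring factor $\|H_{j-1,j}R_j\|_\op\le\beta/(1+\alpha/2)$ encodes. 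You have the ingredient (your spectral-mapping remark about $\eta$), but you attribute the $\varepsilon^{-2}$ prefactor to a degradation of the Combes--Thomas rate near the real axis; in the paper that prefactor comes only from the two full-resolvent norms $\|R\|_\op\lesssim\varepsilon^{-1}$ appearing in the off-diagonal Schur-complement blocks, while the exponential rate itself is $\varepsilon$-independent. With that clarification your argument closes.
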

\begin{proof}
The proof is in Section \ref{proof:convergence}.
\end{proof}
We note that $\tau$ is measured in reciprocal lattice length-scale, while $r$ is a radius of a ball living within a single reciprocal lattice unit cell.
When mechanical relaxation effects are included, momenta convergence looks like $e^{-\gamma_m r}$. 
\begin{remark}
    \refchange{If we take $r$ too large, we will lose homotopic triviality of our truncated region, which would lead to an infinite sized matrix. Hence there is a limit to how large we can take $r$ before we lose the ability to write a meaningful numeric scheme.} In practice, $\gamma_m$ is quite large even with mechanical relaxation effects included due to the weak Van der Waals coupling between sheets in bilayer 2D heterostructures. So while we can't converge to arbitrary accuracy with this approach, we can converge to a sufficiently small error that the resulting information is physically meaningful. In other words, isolated momenta regions must be considered coupled if you are interested in an arbitrarily small energy resolution scale $\varepsilon$, but most physically relevant questions don't require this level of accuracy in the answer.
\end{remark}

\begin{remark}
At this point, $H_r(q)$ for twisted bilayer graphene is very close to the Bristritzer-MacDonald model. The latter model can be derived from $H_r(q)$ by the following approach: First mechanical relaxation effects are neglected. Secondly, monolayer coupling terms $\mon_j$ are replaced with a Dirac cone recentered around the momenta $K$ corresponding to the tip of the Dirac point:  \dm{$m_j(K + q) \approx \begin{pmatrix} 0 & q_x + i q_y \\ q_x - i q_y & 0\end{pmatrix}$ where $q = (q_x,q_y)$}. Finally, the interlayer coupling near the Dirac point is approximated by just the three smallest scattering directions, with equal hopping strength, and all other hoppings are neglected. This makes an elegant model for approximating unrelaxed twisted bilayer graphene. Our method allows extension of this idealized model to more complex momentum space hopping effects, for example those produced by mechanical relaxation effects at small angles. Note from Figure \ref{fig:coupling} that the momentum space hopping range increases with moir\'{e} length scale. The approach outlined in this paper applies to materials other than twisted bilayer graphene, and to more complicated moir\'e systems such as strained interfaces and multilayered heterostructures.
\end{remark}
\begin{remark}
We note more complex observables can be computed by this method as well, as long as the observable is primarily dependent on spectral information in an energy window yielding  a collection of finite matrices $H_r(q)$. For example, the calculation of electrical conductivity via linear response using momentum space is treated in \cite{massatt2020efficient}.
\end{remark}

\begin{figure}[ht]
\includegraphics[width=.8\textwidth]{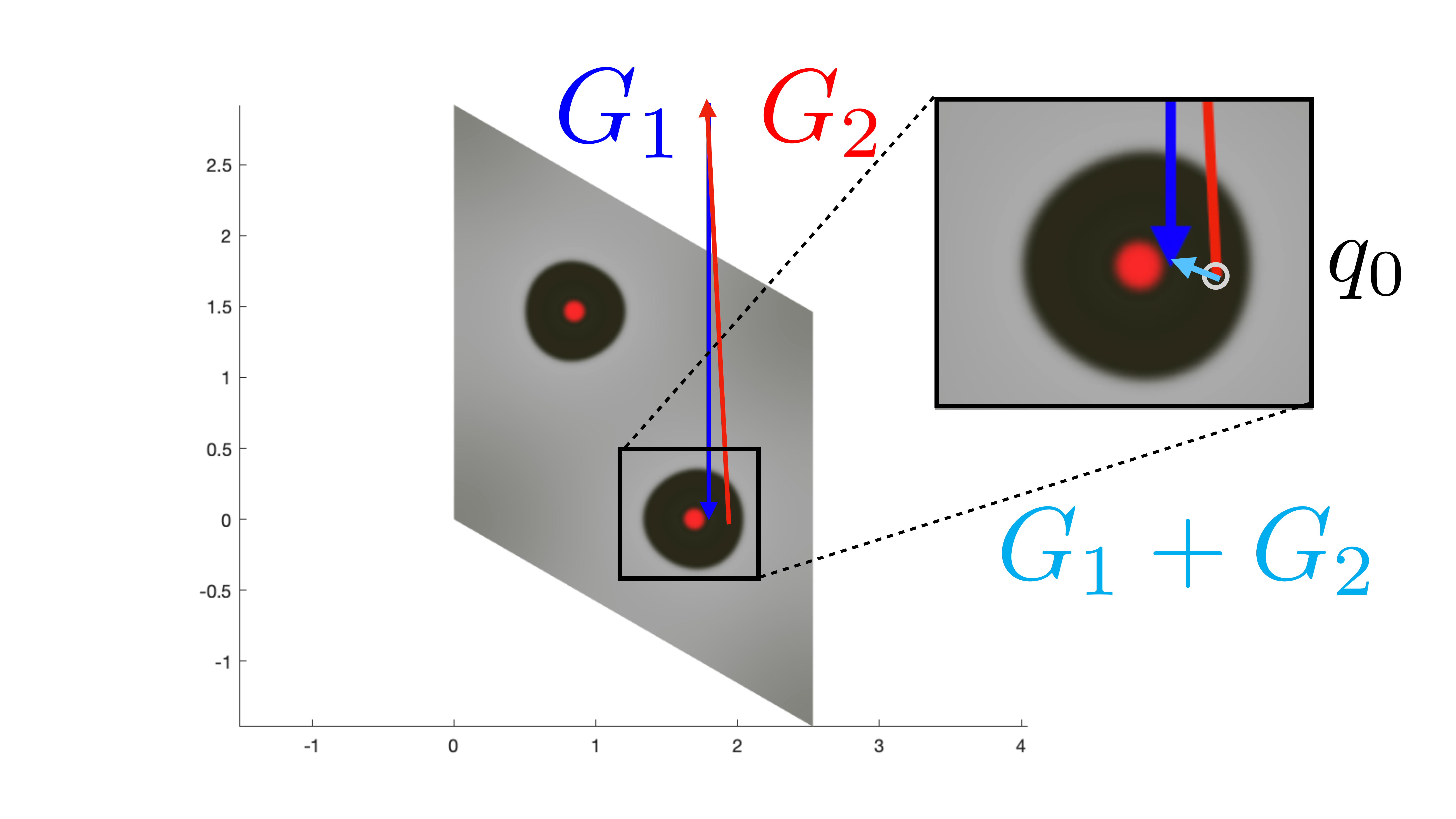}
\caption{ Here we consider a starting wavenumber $q_0 \in \Gamma_1^*$ and let $n \in \mathbb{Z}^2$. If $G_2 = 2\pi A_2^{-T} n$ with corresponding $G_1 = -2\pi A_1^{-T} n$, then the momenta $q+G_2$ can also be written as $q+ G_2+G_1 = q + \Theta_{21} n$. We thus see that $\Theta_{21}$ well characterizes the shift in momentum when we move along lattice sites in reciprocal space.}
\label{fig:moirecell}
\end{figure}

\section{Numerics}
\label{sec:numerics}

We now describe practical details on the implementation of our momentum basis model for relaxed TBG \cite{Catarina2019}, and present results on band structure and convergence.
At energies near the Fermi energy, monolayer graphene's band structure near the Brillouin zone corners $K$ and $K'$ can be described by the Dirac equation

\begin{equation}
H(q) = -\hbar v_F \vec{\sigma} \cdot \vec{q},
\end{equation}
with $\vec{q} = (q_x, q_y)$ and $\vec{\sigma} = (\sigma_1, \sigma_2)$, the first two Pauli matrices
\cite{Castro2009,Catarina2019}.
Importantly, the only free parameter is the Fermi velocity $v_F$, which sets the dispersion (slope) of the bands, and it is linearly dependent on the nearest-neighbor hopping parameter in the graphene tight-binding model.
However, as $q$ moves away from $K$, this simple model becomes less accurate due to missing terms from higher-order hopping parameters.
Instead, for our model we implement $H(q)$ as a Bloch wave defined by the tight-binding hopping parameters up to the fifth nearest neighbor, with values obtained from a previous first-principles study of graphene \cite{shiang2016}.

For TBG, the set of basis elements within the selected energy window, {$\Gamma_j^*(\Sigma+B_\eta)+B_r$}, can be described more succinctly in terms of a fixed truncation radius $\Lambda$.
This is because the conical bands of graphene lead to a linear relationship between spatial cutoffs and energy cutoffs.
As any basis element can be given in terms of a reciprocal lattice index $n \in \Z$, the truncation corresponds to keeping only those with position $|\Theta_{21} n| < \Lambda$.

For interlayer coupling, we use a direct plane-wave inner product to populate a specialized grid of momenta, and then perform quadratic interpolation to obtain the tunneling value at any generic $\xi$.
To ensure proper symmetry in the final Hamiltonian, one must ensure the sampling of both the real space and momentum space grids is consistent with the symmetries of the twisted bilayer.
For real space, the tunneling is sampled on a triangular lattice of points, with a smoothed radial cut-off at $8$ \AA{}.

We use an interlayer hopping functional derived from previous first-principle calculations~\cite{shiang2016,carr2018}.
We sample the plane-wave inner products ($\hat{h}$) on {a} truncated doubly-nested grid with moment $G + G'$, with $G \in \R^*_2$ and $G' \in \R^*_1$.
We truncate this model by only considering $|G|,|G'| < \tau$, for some truncation radius $\tau$, which limits both the large and small grid samplings.
For small twist angles, this truncation procedure leads to a momentum sampling which consists of ``islands'' sampled at the moir\'e reciprocal lattice scale, separated from one other on the monolayer reciprocal lattice scale.

The large (monolayer) scale sampling captures the rough scattering direction, with the three smallest such scatterings corresponding to the three tunneling directions in the BM model. 
The small (moir\'e) scale sampling captures the gradient of $\hat{h}$ in the vicinity of each scattering direction, a correction to the BM model which causes most of the particle-hole asymmetry in the low-energy bands of TBG~\cite{Carr2019exact}.
For any given $q + G + G'$, we then interpolate the tunneling strength from this customized grid of pre-calculated values of $\hat{h}_{\alpha \alpha'}$.
We note that if instead one attempts a square-grid 2D FFT, a large amount of symmetry and resolution inaccuracies occur even with very fine mesh-sizes, requiring large amounts of memory for poor performance.

One final implementation detail is related to the sublattice orbital shifts between twisted layers.
For each pair of interlayer orbitals, we redefine the sampling grid such that the two orbitals are aligned at the origin, and then add the relative shift $\Delta$ as an additional phase factor after interpolation by multiplying by $e^{i (q + G + G') \cdot \Delta}$.
For example, tunneling between two $A$ orbitals would require $\Delta = 0$, but between an $A$ and $B$ orbital $\Delta$ would be roughly the sublattice bonding distance (with an appropriate small twist for the layer of the $B$ orbital).

\begin{figure}[ht]
\includegraphics[width=.5\textwidth]{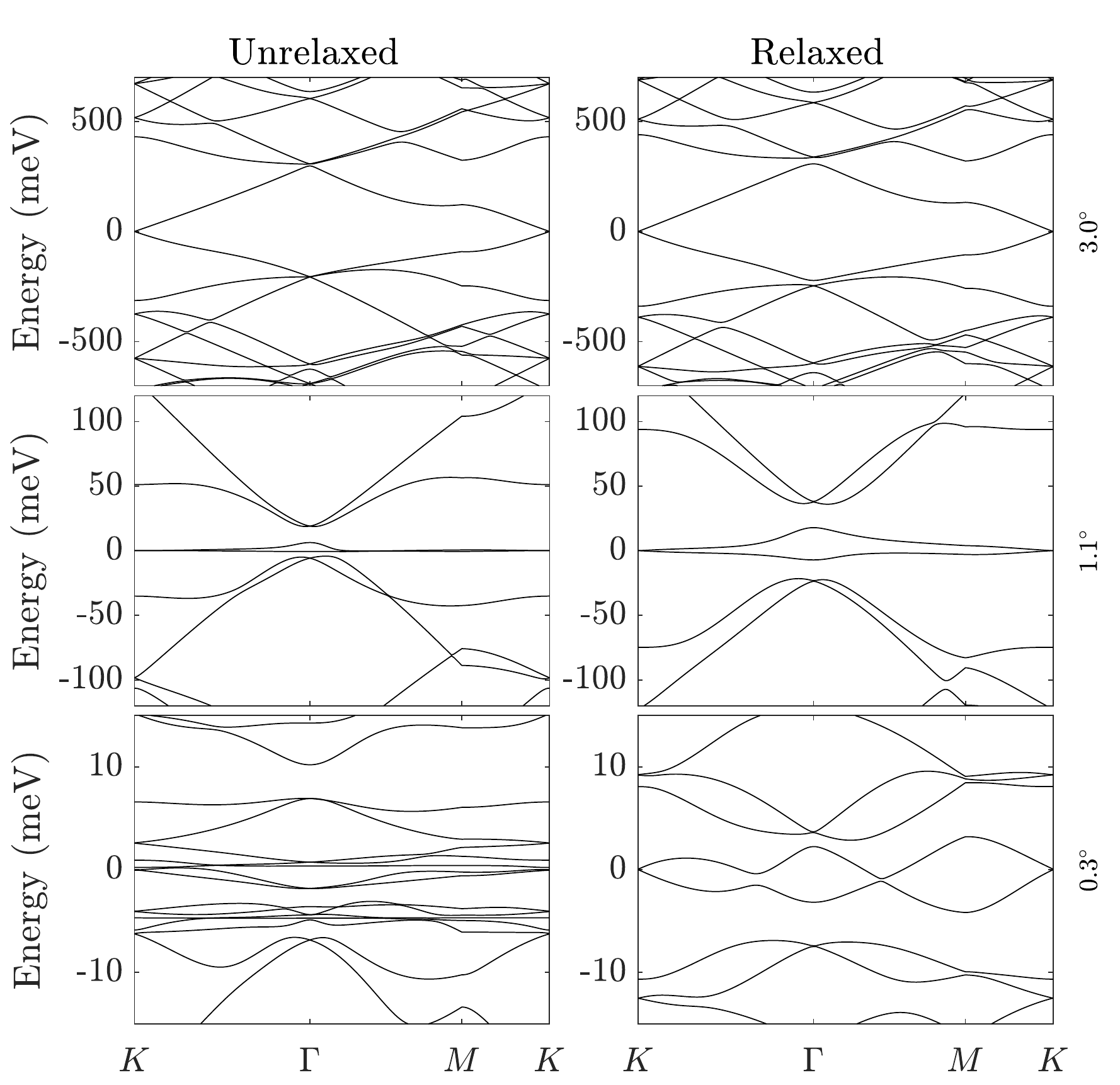}
\caption{Electronic band structure along high-symmetry lines of the moir\'e Brillouin zone at a single monolayer $K$ valley for three twist angles, $3.0^\circ$ (top), $1.1^\circ$ (middle), and $0.3^\circ$ (bottom). The first column shows the band structure for unrelaxed TBG, while the second shows that of relaxed TBG. The momentum axes are labeled in terms of the high-symmetry points of the reciprocal lattice of the moir\'e supercell, not the graphene monolayer cells.}
\label{fig:band_compare}
\end{figure}

With our truncation of the momentum basis defined and all relevant intra and inter-layer terms calculated, we can now diagonalize the Hamiltonian matrix to obtain electronic band structure.
In Fig. \ref{fig:band_compare}, we show results of our model for a single valley of TBG \cite{Catarina2019} for three angles, both relaxed and unrelaxed.
We see that at large angles ($\theta = 3.0^\circ$), the Dirac cones of graphene are still clearly visible.
The effects of relaxation are small but noticeable: a small moir\'e band gap opens up near the first band crossing at $\pm 350$ eV.

Near the magic angle ($\theta = 1.1^\circ$) \cite{Castro2009,Catarina2019}, the linear dispersion of the Dirac cone is nearly perfectly compensated by the interlayer band hybridization, creating an extremely flat band.
After relaxations, the band is slightly less flat and the moir\'e band gaps near $\pm 40$ eV are larger.
The flat bands are still possible for the relaxed system, but they are now at a slightly larger angle, due to an increase in the effective AA interlayer tunnel strength (see Fig. \ref{fig:coupling}).

At small angles ($\theta = 0.3^\circ$), accurately capturing atomic relaxation becomes of upmost importance.
As the moir\'e pattern is now many tens of nm, large domains of uniform AB or BA stacking occur and are criss-crossed by narrow domain-walls of intermediate stacking.
The unrelaxed band structure does not capture this atomic reconstruction, and shows a large amount of intersecting bands at low-energy.
With relaxations, the electronic structure is less busy at low energy, and clear Dirac points are still visible along with small moir\'e band gaps at roughly $\pm 5$ meV.

\begin{figure}[ht]
\includegraphics[width=.9\textwidth]{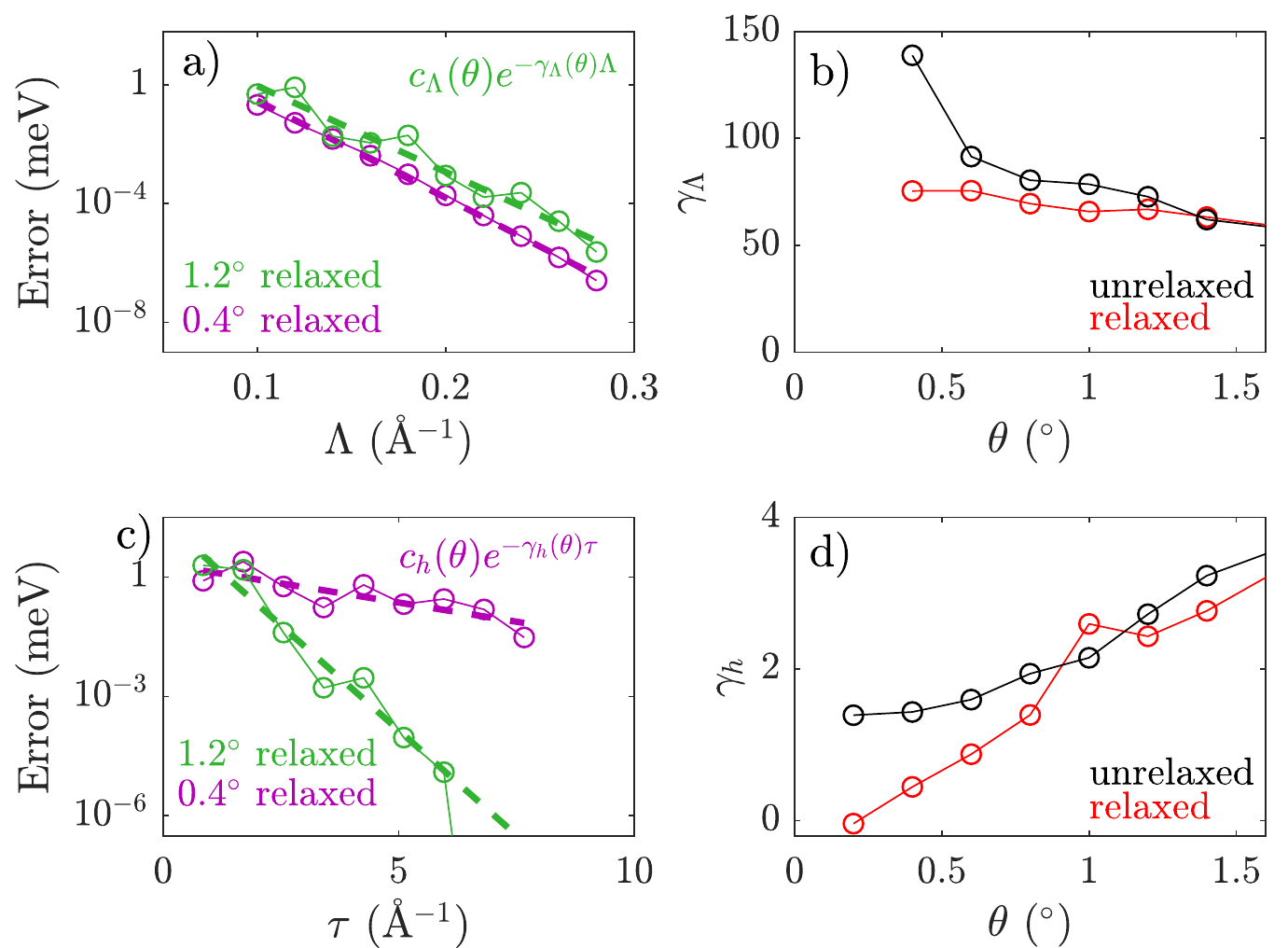}
\caption{Convergence of electronic structure.
\textbf{a)} The relative error for the $\Gamma$-point electron eigenvalue closest to the Fermi energy as a function of the momentum basis truncation radius $\Lambda$. The results for the relaxed system at $0.4^\circ$ and $1.0^\circ$ are in purple and green respectively. A linear fit to the log of the error is shown, giving a constant $c_\Lambda$ and slope $\gamma_\Lambda$.
\textbf{b)} The dependence of $\gamma_\Lambda$ on the twist angle $\theta$ for the unrelaxed (black) and relaxed (red) calculations. 
\textbf{c,d)} Same as (a,b) but for the convergence in the interlayer truncation radius $\tau$ and its exponential convergence $\gamma_h$.}
\label{fig:convergence}
\end{figure}

The exponential convergence of our relaxed momentum-space algorithm can be directly assessed by calculating the relative convergence of the eigenvalue at $q = 0$ of the moir\'e Brillouin Zone ($\epsilon_\Gamma$) as a function of the adjustable parameters.
We will look for a form similar to that used in Thm.~\ref{thm:convergence}, however there will be no error due to the observable because we are performing an eigencalculation and $\gamma_m$ will be replaced instead with an exponential decay in $\Lambda$:
\begin{equation}
    \biggl|  \epsilon_\Gamma(\Lambda,\tau) - \epsilon_\Gamma \biggr| \lesssim e^{-  \gamma_h  \tau} + e^{ - \gamma_\Lambda \Lambda}.
\end{equation}
In Fig. \ref{fig:convergence}, we focus on the momentum basis truncation radius $\Lambda$ and the interlayer tunneling truncation $\tau$.
For all twist angles and relaxation assumptions, the error decreases exponentially with both $\Lambda$ and $\tau$.
We extract the slope of this exponential convergence, $\gamma_\Lambda$ and $\gamma_h$ respectively, and study their dependence on the twist angle $\theta$.
In general, there are two ranges for the $\theta$-dependence of both $\gamma$ values: above and below the magic angle ($\theta = 1.1^\circ$).
Above the magic angle, the electronic structure is only weakly affected by the \refchange{relaxation} pattern, while at or below the magic angle the moir\'e pattern and atomic relaxations become increasingly more important to the low-energy eigenvalues as $\theta$ goes to $0$.

Starting with $\Lambda$, we see that for the relaxed system, the convergence rate is roughly constant as a function of $\theta$, in agreement with our assumption in Fig. \ref{fig:extended_bz} that only a finite energy range of the momentum basis must be included to accurately reproduce the low energy band structure.
However, the unrelaxed system converges much faster with $\Lambda$ as the twist angle decreases (e.g. larger $\gamma_\Lambda$).
This difference is caused by the fact that the interlayer tunneling function in the unrelaxed system does not change with the twist angle.
So as the twist angle becomes small, a fixed truncation radius $\Lambda$ will include monolayer Bloch states at the same energies, but the number of ``hops'' needed in momentum space to reach them grows like $\theta^{-1}$ in the unrelaxed case because of its $\theta$-independent tunneling range.

Assuming these tunnelings can be considered weak matrix perturbations, each hop between momentum basis elements reduces the effect that a higher energy state will have on a low energy eigenvalue.
Therefore, for angles where many states are included within the sampled $\Lambda$ ($\theta < 1^\circ$), we see that the unrelaxed exponential convergence $\gamma_\Lambda \propto \theta^{-1}$ (the number of hops connecting the states) while the relaxed exponential convergence $\gamma_\Lambda$ is a constant, as the relaxed tunneling range grows like $\theta^{-1}$ as well.
From a computational cost perspective, the unrelaxed system can have its $\Lambda$ decreased linearly with $\theta$.
As the magnitude of the moir\'e reciprocal lattice is also proportional to $\theta$, the matrix-size for an accurate calculation does not change with $\theta$ for the unrelaxed model.
However, for the relaxed calculation $\Lambda$ must stay a constant.
The shrinking moir\'e reciprocal lattice scale means the matrix size for an accurate calculation will grow as $\theta^{-2}$ in the relaxed model.

Moving on to $\tau$, a different $\theta$ dependence on the exponential convergence $\gamma_h$ is observed. At large angles ($\theta > 1.0^\circ$), the relaxed and unrelaxed models have identical convergence properties, since the relaxation is quite weak.
The unrelaxed $\gamma_h$ smoothly approaches a finite value as $\theta$ approaches $0^\circ$, consistent with the observation that the interlayer tunneling range is independent of $\theta$ in the unrelaxed model (Figure~\ref{fig:coupling}).
In contrast, the relaxed $\gamma_h$ goes to $0$ as $\theta$ does, showing that the tunneling range of the relaxed system scales like $\theta^{-1}$.

For extremely small twist angles, accurate calculation of relaxed TBG's electronic band structure therefore requires increasingly higher scattering frequencies in its Fourier decomposition.
This matches the reconstruction of the atomic geometry, which forms domain-walls of constant $10$ nm width \cite{relaxphysics18} that can only be described by an infinite number of Fourier components as $\theta$ goes to $0^\circ$.
As mentioned in Section~\ref{sec:intro}, this necessary inclusion of higher momentum components in the interlayer tunneling function at small angles prevents any mapping of the realistic TBG model~\cite{Carr2019exact} onto the theoretically important chiral symmetric model~\cite{Tarnopolsky2019,Khalaf2019,Wang2021}.
We note that the relaxed $\gamma_h(\theta)$ appears to reach zero at a finite value of $\theta$.
This is caused by the relatively small effects of the finite sampling of the realspace mesh of interlayer tunnelings, especially at small angles when the atomic relaxation is severe.
We found the effective intercept increased when that mesh was made larger, suggesting it is the source of the error.
Due to memory constraints of the one-shot Fourier transform we have implemented, calculations with large $\tau$ and very fine $r$-meshes were not possible.
This constraint means at large $\tau$ a residual $r$-mesh error appears in the $\tau$ convergence, affecting the estimation of the slope $\gamma_h$.

\section{Proofs}
\label{sec:proofs}

\subsection{Proof of ergodic unfolding: configuration to real and momentum to reciprocal spaces}
\label{proof:ergodicunfolding}
In this section, we prove the following theorem:
\begin{thmpf}
For $\hu \in \UnderlyingSpace^\cf_\herm$, we have
\begin{align}
\label{e:cf_rl}
&\blochmap_{\recip_b} \bigl(\Gen^\cf(\hu)\bigr) = \Gen^\rl(t_b\hu), \\
&\blochmap_{\tilde\recip_q}\bigl( \Gen^\ms(\tilde \hu)\bigr) = \Gen^\rp(t_q\tilde\hu).
\label{e:ms_rp}
\end{align}
Suppose $O^\cf \in \OpConfig^\cf$ is constructed from the set $(g, \Gen^\cf(\hu_1),\cdots \Gen^\cf(\hu_n))$, $O_b^\rl \in \OpConfig^\rl$ is constructed from the set $(g, \Gen^\rl(t_b\hu_1),\cdots \Gen^\rl(t_b\hu_n))$, $O^\ms \in \OpConfig^\ms$ is constructed from the set $(g, \Gen^\ms(\tilde \hu_1),\cdots \Gen^\ms(\tilde \hu_n))$, and $O_q^\rp \in \OpConfig^\rp$ is constructed from the set $(g, \Gen^\rp(t_q\tilde \hu_1),\cdots \Gen^\rp(t_q\tilde \hu_n))$. Then
\begin{align}
\label{e:cf_rl_operator}
& \blochmap_{\recip_b}( O^\cf) = O_b^\rl, \\
& \blochmap_{\recip_q}( O^\ms) = O_q^\rp.
\label{e:ms_rp_operator}
\end{align}
\end{thmpf}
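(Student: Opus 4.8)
The plan is to reduce each similarity identity \eqref{e:cf_rl}--\eqref{e:ms_rp} to an \emph{intertwining relation} and then to promote the operator identities to the observable identities \eqref{e:cf_rl_operator}--\eqref{e:ms_rp_operator} by exploiting that conjugation by a unitary is a spectrum-preserving algebra homomorphism. Because $\recip_b$ and $\tilde\recip_q$ are unitary between the analytic configuration/momentum spaces and the ergodic real/reciprocal spaces, \eqref{e:cf_rl} is equivalent to
\[
\recip_b\,\Gen^\cf(\hu)=\Gen^\rl(t_b\hu)\,\recip_b,
\]
and likewise \eqref{e:ms_rp} to $\tilde\recip_q\,\Gen^\ms(\tilde\hu)=\Gen^\rp(t_q\tilde\hu)\,\tilde\recip_q$; multiplying on the right by $\recip_b^*=\recip_b^{-1}$ then yields $\blochmap_{\recip_b}(\Gen^\cf(\hu))=\recip_b\Gen^\cf(\hu)\recip_b^*=\Gen^\rl(t_b\hu)$. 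I would verify these intertwining relations block by block, using the sampling description $(\recip_b\phi)_{R\alpha}=\phi_\alpha\bigl(R+(-1)^{j+1}b\bigr)$ for $\alpha\in\A_j$, which records that sheet one carries $T_b$ and sheet two carries $T_{-b}$.

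For the intralayer block I would expand $\Gen^\cf_{j\leftarrow j}(\hu_{jj})=\sum_{R}[\hu_{jj}]_R(\cdot)T_{-R}$, sample the result at the lattice point $R+(-1)^{j+1}b$, and reindex the lattice sum. Writing this out for sheet one gives $(\recip_b\Gen^\cf_{1\leftarrow 1}(\hu_{11})\phi)_{R\alpha}=\sum_{R'\alpha''}[\hu_{11}]_{R',\alpha\alpha''}(R+b)\,\phi_{\alpha''}(R+b-R')$, while the right-hand side produces $\sum_{R'\alpha'}[\hu_{11}]_{R-R',\alpha\alpha'}(R+b)\,\phi_{\alpha'}(R'+b)$ once one uses that the Fourier mode of $t_b\hu_{jj}$ is $[\hu_{jj}]_R(\cdot+(-1)^{j+1}b)$ together with the real-space definition $(\Gen^\rl_{j\leftarrow j}(t_b\hu_{jj}))_{R\alpha,R'\alpha'}=[t_b\hu_{jj}]_{R-R',\alpha\alpha'}(R)$; the two expressions agree after the substitution $R'\mapsto R-R'$. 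The sign $(-1)^{j+1}$ is exactly what is needed to match the $T_{\pm b}$ in $\recip_b$ with the sign convention in $t_b$.

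The interlayer block is where I expect the real bookkeeping difficulty. Here $\Gen^\cf_{i\leftarrow j}(\hu_{ij})=\sum_{R\in\R_j}[\hu_{ij}](\cdot-R)\,S\,T_R$ contains the reflection $S:\phi(b)\mapsto\phi(-b)$, so after sampling the outgoing sheet $i$ at $R+(-1)^{i+1}b$ one must track how $S$ and $T_R$ act on the incoming sheet $j$ argument, and then check that the phase assembles into the real-space element $[\hu_{ij}]_{\alpha\alpha'}(R-R')$ evaluated at the correctly shifted argument. The sign $(-1)^i$ in $(t_{b'}\hu)_{ij}(x)=\hu_{ij}(x+(-1)^ib')$ is precisely the one that makes the outgoing and incoming phases consistent across both sheets simultaneously, and getting all of these signs to close is the main obstacle. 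The momentum-to-reciprocal intertwining is proved by the identical calculation with $q,G,\tilde\recip_q$ replacing $b,R,\recip_b$; note that $\Gen^\ms_{i\leftarrow j}(\tilde\hu_{ij})=\sum_{G\in\R_i^*}\tilde\hu_{ij}(\cdot+G)T_G$ carries a $+G$ and no reflection, so that case is structurally simpler.

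Finally, I would promote the operator identities to observables. The conjugation $\blochmap_{\recip_b}(\cdot)=\recip_b(\cdot)\recip_b^*$ is a unital algebra homomorphism, and since $\recip_b$ is unitary it is isospectral, so $\sigma(\blochmap_{\recip_b}(\Gen^\cf(\hu_k)))=\sigma(\Gen^\cf(\hu_k))$ and every contour $C_k\in\Contour_\varepsilon(\Gen^\cf(\hu_k))$ remains admissible for $\Gen^\rl(t_b\hu_k)$. Using $\recip_b(z_k-\Gen^\cf(\hu_k))^{-1}\recip_b^*=(z_k-\Gen^\rl(t_b\hu_k))^{-1}$ and passing the bounded conjugation through the norm-convergent contour integral by linearity and continuity gives
\[
\blochmap_{\recip_b}(O^\cf)=\int_\contourprod g(z)\prod_{k=1}^n\bigl(z_k-\Gen^\rl(t_b\hu_k)\bigr)^{-1}dz=O_b^\rl,
\]
which is \eqref{e:cf_rl_operator}; \eqref{e:ms_rp_operator} follows verbatim with $\tilde\recip_q$. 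The observable step is routine once spectral invariance under the unitary conjugation is noted, so the technical weight of the argument rests on the interlayer intertwining computation.
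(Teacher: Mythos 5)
Your proposal follows essentially the same route as the paper: both establish the intertwining relations $\recip_b\,\Gen^\cf(\hu)=\Gen^\rl(t_b\hu)\,\recip_b$ and $\tilde\recip_q\,\Gen^\ms(\tilde\hu)=\Gen^\rp(t_q\tilde\hu)\,\tilde\recip_q$ by direct block-by-block sampling computations and then promote them to the observable identities via the resolvent identity and continuity of the contour integral in $z$. The interlayer bookkeeping you flag as the main obstacle is exactly the computation the paper carries out explicitly, and it closes as you anticipate, so there is no gap.
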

\begin{proof}
We first set out to show 
\[
\recip_b \Gen^\cf(\hu) = \Gen^\rl(t_b\hu) \recip_b,
\]
which is sufficient to show \eqref{e:cf_rl}. Consider $\psi \in \X^\cf$. Then for $R\alpha \in \Omega_1$,
\begin{equation*}
\begin{split}
&[\recip_b\Gen^\cf(\hu)\psi]_{R\alpha} = \recip_b \begin{pmatrix} \sum_{R_1 \in \R_1} [\hu_{11}]_{R_1}(b')\psi_1(b'-R_1) + \sum_{R_2 \in \R_2}[\hu_{12}](b'-R_2)\psi_2(-b'+R_2) \\ \sum_{R_2\in\R_2} [\hu_{22}]_{R_2}(\tilde b) \psi_2(\tilde b-R_2) + \sum_{R_1' \in \R_1} [\hu_{21}](\tilde b-R_1)\psi_1(-\tilde b+R_1)\end{pmatrix}_{R\alpha} \\
&= \biggl(\sum_{R_1 \in \R_1}[\hu_{11}]_{R_1}(b+R)\psi_1(b+R-R_1) + \sum_{R_2 \in \R_2}[\hu_{12}](b+R-R_2)\psi_2(-b-R+R_2)]\biggr)_\alpha \\
&= \biggl(\sum_{R_1 \in \R_1}[\hu_{11}]_{R-R_1}(b+R)\psi_1(b+R_1) + \sum_{R_2 \in \R_2}[\hu_{12}](b+R-R_2)\psi_2(-b+R_2)]\biggr)_\alpha \\
&= \biggl(\Gen^\rp(t_b\hu)\recip_b\psi\biggr)_{R\alpha}.
\end{split}
\end{equation*}
Likewise for $R\alpha \in \Omega_2$, we have
\begin{equation*}
\begin{split}
&[\recip_b\Gen^\cf(\hu)\psi]_{R\alpha}  \\
&=\biggr(\sum_{R_2\in\R_2} [\hu_{22}]_{R_2}(-b+R) \psi_2(-b+R-R_2) + \sum_{R_1 \in \R_1} [\hu_{21}](-b+R-R_1)\psi_1(b-R+R_1)\biggr)_\alpha\\
&=\biggr(\sum_{R_2\in\R_2} [\hu_{22}]_{R-R_2}(-b+R) \psi_2(-b+R_2) + \sum_{R_1 \in \R_1} [\hu_{21}](-b+R-R_1)\psi_1(b+R_1)\biggr)_\alpha\\
&= \biggl(\Gen^\rp(t_b\hu)\recip_b\psi\biggr)_{R\alpha}.
\end{split}
\end{equation*}
This verifies \eqref{e:cf_rl}. Next we work to show
\[
\tilde\recip_q \Gen^\ms(\tilde \hu) = \Gen^\rp(t_q\tilde \hu)\tilde\recip_q.
\]
We proceed as above. We consider $\psi \in \X^\ms$, and write for $G\alpha \in \Omega_1^*$:
\begin{equation*}
\begin{split}
& \biggl(\tilde\recip_q \Gen^\ms(\tilde \hu)\psi\biggr)_{G\alpha} = \tilde \recip_q \begin{pmatrix} \sum_{G_2 \in \R_2^*}[\tilde \hu_{11}]_{G_2}(q')\psi_1(q'-G_2) + \sum_{G_1 \in \R_1^*} [\tilde \hu_{12}](q' + G_1)\psi_2(q'+G_1) \\ \sum_{G_1 \in \R_1^*} [\tilde \hu_{22}]_{G_1}(\tilde q ) \psi_2(\tilde q-G_1) + \sum_{G_2 \in \R_2^*} [\tilde \hu_{21}](\tilde q+G_2)\psi_1(\tilde q+G_2)\end{pmatrix} \\
&=\sum_{G_2 \in \R_2^*}[\tilde \hu_{11}]_{G_2}(q+G )\psi_1(q+G-G_2) + \sum_{G_1 \in \R_1^*} [\tilde \hu_{12}](q+G + G_1)\psi_2(q + G+G_1) \\
&=\sum_{G_2 \in \R_2^*}[\tilde \hu_{11}]_{G-G_2}(q+G )\psi_1(q+G_2) + \sum_{G_1 \in \R_1^*} [\tilde \hu_{12}](q+G + G_1)\psi_2(q +G_1) \\
&= \biggl(\Gen^\rp(t_q\tilde \hu)\tilde \recip_q\psi \biggr)_{G\alpha}.
\end{split}
\end{equation*}
By symmetry, the same is true for $G\alpha \in \Omega_2^*$, and \eqref{e:ms_rp} is verified. To show \eqref{e:cf_rl_operator}, consider $\psi \in \X^\cf$ and $\phi = (z-\Gen^\cf(\hu))\psi \in \X^\cf$. Then
\begin{equation*}
\recip_b \phi = (z- \Gen^\rl(t_b\hu))\recip_b \psi,
\end{equation*}
and hence
\[
(z-\Gen^\rp(t_b\hu))^{-1}\recip_b\phi = \recip_b (z-\Gen^\cf(\hu))^{-1}\phi.
\]
In other words,
\[
\blochmap_{\recip_b}\bigl((z-\Gen^\cf(\hu))^{-1}\bigr) = (z-\Gen^\rl(t_b\hu))^{-1}.
\]
Now we have 
\[
O^\cf = \int_\contourprod g(z) \prod_{j=1}^n(z_j - \Gen^\cf(\hu))^{-1}dz.
\]
By the resolvent relation above we have
\[
\blochmap_{\recip_b}(\prod_{j=1}^n(z_j - \Gen^\cf(\hu))^{-1}) = \prod_{j=1}^n(z_j - \Gen^\rl(t_b\hu))^{-1}.
\]
By continuity in $z$, we obtain \eqref{e:cf_rl_operator}.
The same argument yields \eqref{e:ms_rp_operator}.

\end{proof}

\subsection{Proof of Bloch unitary mapping: real space to momentum and reciprocal  to configuration spaces}
\label{proof:blochtransform}

\begin{thmpf}
For $\hu \in \UnderlyingSpace^\cf_\herm$, we have
\begin{align}
\label{e:rl_ms}
& \blochmap_{\G} \bigl( \Gen^\rl(\hu) \bigr) = \Gen^\ms(\tilde \hu), \\
& \blochmap_{\tilde\G} \bigl( \Gen^\rp(\tilde \hu)\bigr) = \Gen^\cf(\hu).
\label{e:rp_cf}
\end{align}
Suppose $O^\cf \in \OpConfig^\cf$ is constructed from the set $(g, \Gen^\cf(\hu_1),\cdots \Gen^\cf(\hu_n))$, $O^\rl \in \OpConfig^\rl$ is constructed from the set $(g, \Gen^\rl(\hu_1),\cdots \Gen^\rl(\hu_n))$, $O^\ms \in \OpConfig^\ms$ is constructed from the set $(g, \Gen^\ms(\tilde \hu_1),\cdots \Gen^\ms(\tilde \hu_n))$, and $O^\rp \in \OpConfig^\rp$ is constructed from the set $(g, \Gen^\rp(\tilde \hu_1),\cdots \Gen^\rp(\tilde \hu_n))$. Then
\begin{align}\
\label{e:rl_ms_operator}
& \blochmap_{\G}( O^\rl)= O^\ms \\
& \blochmap_{\tilde \G}( O^\rp) = O^\cf.
\label{e:rp_cf_operator}
\end{align}
\end{thmpf}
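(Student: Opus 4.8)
The plan is to follow the same two-layer strategy used for Theorem~\ref{thm:ergodicunfolding}: first establish the operator intertwining at the level of Hamiltonians, and then promote it to the full observable algebra $\OpConfig$ through the resolvent calculus. Since $\G$ and $\tilde\G$ are unitary, $\blochmap_\G(\cdot)=\G(\cdot)\G^*$, so to prove \eqref{e:rl_ms} it suffices to verify the intertwining relation
\[
\G\,\Gen^\rl(\hu)=\Gen^\ms(\tilde\hu)\,\G,
\]
and the analogous relation $\tilde\G\,\Gen^\rp(\tilde\hu)=\Gen^\cf(\hu)\,\tilde\G$ for \eqref{e:rp_cf}. Because $\G=\mathrm{diag}(\G_1,\G_2)$ and both $\Gen^\rl(\hu)$ and $\Gen^\ms(\tilde\hu)$ are $2\times 2$ block operators, the identity splits into four block identities: two intralayer, $\G_j\Gen^\rl_{j\leftarrow j}(\hu_{jj})=\Gen^\ms_{j\leftarrow j}(\tilde\hu_{jj})\G_j$, and two interlayer, $\G_i\Gen^\rl_{i\leftarrow j}(\hu_{ij})=\Gen^\ms_{i\leftarrow j}(\tilde\hu_{ij})\G_j$.

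For the interlayer blocks I would apply both sides to a generic $\psi\in\X^\rl$ and compute directly. Writing the real-space matrix element $[\hu_{ij}]_{\alpha\alpha'}(R-R')$ through the inverse Fourier transform $\hu_{ij}(x)=\int\hat\hu_{ij}(\xi)e^{ix\cdot\xi}\,d\xi$ and applying the Bloch sum $\G_i$, the lattice sum over $\R_i$ collapses by Poisson summation, $\sum_{R\in\R_i}e^{iR\cdot(\xi-q)}=|\Gamma_i^*|\sum_{G\in\R_i^*}\delta(\xi-q-G)$, leaving a sum over $G\in\R_i^*$ of terms $\hat\hu_{ij}(q+G)(\G_j\psi_j)(q+G)$, which is exactly the $T_G$-shift structure of $\Gen^\ms_{i\leftarrow j}$. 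Tracking the normalizations of the two Bloch transforms produces precisely the factor $c_1^*c_2^*=|\Gamma_1^*|^{1/2}|\Gamma_2^*|^{1/2}$ ($\G_i$ contributes $|\Gamma_i^*|^{-1/2}$, the $\G_j$-sum $|\Gamma_j^*|^{1/2}$, and the delta comb $|\Gamma_i^*|$), so that $\hat\hu_{ij}$ is replaced by $\tilde\hu_{ij}=c_1^*c_2^*\hat\hu_{ij}$ and the right-hand side is exactly $\Gen^\ms_{i\leftarrow j}(\tilde\hu_{ij})\G_j\psi_j$. The exponential decay built into $\hopInter(M_{ij})$ (both $\hu_{ij}$ and $\hat\hu_{ij}$ decay) is what justifies interchanging the lattice sum with the $\xi$-integral and the appearance of the periodized delta comb.

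For the intralayer blocks the computation is the discrete analogue: I would insert the partial Fourier expansion $[\hu_{jj}]_{R-R'}(R)=\sum_{G}[\hu_{jj}]_{R-R',G}e^{iG\cdot R}$, apply $\G_j$, reindex by $R_1=R-R'$, and recognize the remaining lattice sum over $R'$ as $|\Gamma_j^*|^{1/2}(\G_j\psi_j)(q-G)$, a shift on the reciprocal torus $\Gamma_j^*$ implemented by $T_{-G}$. Matching the resulting kernel with $[\tilde\hu_{jj}]_G(q)$ then requires rewriting it using the defining relation $\tilde\hu_{jj}(b,q)=\hu_{jj}^*(-q,-b)$ together with the Hermiticity constraint $[\hu_{jj}]_{RG}=[\hu_{jj}]_{-R,-G}^*e^{-iG\cdot R}$ that defines $\UnderlyingSpace^\cf_\herm$. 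This bookkeeping --- reconciling the sign conventions in $\G_j$, the torus shift $T_{-G}$, and the Hermiticity relation --- is the main obstacle: the interlayer case is a clean Poisson-summation identity, whereas the intralayer case only closes after the self-adjointness relation is used to convert the conjugated, sign-reversed Fourier modes into the modes appearing in $\Gen^\ms_{j\leftarrow j}$. The relation \eqref{e:rp_cf} for $\tilde\G$ is then proved by the identical argument with the roles of the real lattices $\R_j$ and reciprocal lattices $\R_j^*$ (and of the variables $b$ and $q$) interchanged.

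Finally, the operator statements \eqref{e:rl_ms_operator}--\eqref{e:rp_cf_operator} follow from the Hamiltonian intertwining exactly as in Theorem~\ref{thm:ergodicunfolding}: unitarity of $\G$ gives $\blochmap_\G\big((z-\Gen^\rl(\hu))^{-1}\big)=(z-\Gen^\ms(\tilde\hu))^{-1}$ for $z$ off the (common) spectrum, hence $\blochmap_\G$ maps the product $\prod_{j=1}^n(z_j-\Gen^\rl(\hu_j))^{-1}$ to $\prod_{j=1}^n(z_j-\Gen^\ms(\tilde\hu_j))^{-1}$; integrating against $g(z)$ over the contour $\contourprod$ and using continuity in $z$ to pass $\blochmap_\G$ through the integral yields $\blochmap_\G(O^\rl)=O^\ms$, and likewise $\blochmap_{\tilde\G}(O^\rp)=O^\cf$.
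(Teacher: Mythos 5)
Your proposal is correct and follows essentially the same route as the paper's proof: verify the intertwining relations $\G\,\Gen^\rl(\hu)=\Gen^\ms(\tilde\hu)\,\G$ and $\tilde\G\,\Gen^\rp(\tilde\hu)=\Gen^\cf(\hu)\,\tilde\G$ blockwise, using Poisson summation for the interlayer terms and the partial Fourier expansion for the intralayer terms, then lift to $\OpConfig$ via the resolvent identity and contour integration. Your explicit observation that the intralayer bookkeeping only closes after invoking the Hermiticity relation $[\hu_{jj}]_{RG}=[\hu_{jj}]_{-R,-G}^*e^{-iG\cdot R}$ together with $\tilde\hu_{jj}(b,q)=\hu_{jj}^*(-q,-b)$ is accurate and is in fact used implicitly (without comment) in the paper's displayed computation.
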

\begin{proof}
As in the previous proof, we focus on proving the \dm{isomorphic} relation
\[
\G \Gen^\rl(\hu) = \Gen^\ms(\tilde \hu)\G.
\]
Consider $\psi \in \X^\rl$. For $R \in \R_j$, we denote $\psi_R$ as the vector of size $|\A_j|$ of orbitals corresponding to site $R$. We will make use of the Poisson summation formula:
\[
\sum_{R \in \R_2}e^{i\xi \cdot R} = |\Gamma_2^*| \sum_{G \in \R_2^*}\delta(\xi - G).
\]
Using the definitions provided and the Poisson summation formula, we calculate
\begin{equation*}
\begin{split}
& [\G \Gen^\rl(\hu)\psi]_1(q) = |\Gamma_1^*|^{-1/2}\sum_{R_1 \in \R_1}e^{-iq\cdot R_1} \biggl(\sum_{R'_1 \in \R_1}[\hu_{11}]_{R_1-R_1'}(R_1)\psi_{R_1'} + \sum_{R_2'\in\R_2^*} [\hu_{12}](R_1-R_2') \psi_{R_2'} \biggr)\\
& = |\Gamma_1^*|^{-1/2}\sum_{R_1 \in \R_1}e^{-iq\cdot R_1} \biggl(\sum_{R'_1 \in \R_1,G_2 \in \R_2^*}[\hu_{11}]_{R_1-R_1',G_2}e^{iG_2\cdot R_1}\psi_{R_1'} + \sum_{R_2'\in\R_2} \int_\xi [\hat\hu_{12}](\xi)e^{i\xi\cdot(R_1-R_2')}d\xi \psi_{R_2'} \biggr) \\
&=  \sum_{R_1 \in \R_1,G_2 \in \R_2^*}[\hu_{11}]_{R_1G_2}e^{i(G_2-q)\cdot R_1}\G_1\psi_1(q-G_2) + \sum_{R_1 \in \R_1} \frac{|\Gamma_2^*|^{1/2}}{|\Gamma_1^*|^{1/2}}\int_\xi [\hat\hu_{12}](\xi)e^{i(\xi-q) \cdot R_1} \G_2\psi_2(\xi) d\xi \\
&=\sum_{G_2 \in \R_2^*}[\tilde\hu_{11}]_{G_2}(q)\G_1\psi_1(q+G_2) + c_1^*c_2^*\sum_{G_1 \in \G_1^*} \int_\xi [\hat\hu_{12}](q+G_1) \G_2\psi_2(q+G_1) \\
&= [\Gen^\ms(\tilde \hu)\G\psi]_1.
\end{split}
\end{equation*}
The same argument holds for the second component, which verifies \eqref{e:rl_ms}. Next we consider
\[
\tilde \G \Gen^\rp(\tilde \hu) = \Gen^\cf(\hu)\tilde \G.
\]
Then we have for $\psi \in \X^\rp$
\begin{equation*}
\begin{split}
& [\tilde \G \Gen^\rp(\tilde \hu)\psi]_1(b) =  \sum_{G_2 \in \R_2^*} \frac{e^{i b\cdot G_2}}{|\Gamma_2|^{1/2}} \biggl(\sum_{G_2' \in \R_2^*} [\tilde \hu_{11}]_{G_2-G_2'}(G)\psi_{G_2'} + \sum_{G_1 \in \R_1^*}[\tilde \hu_{12}](G_2+G_1)\psi_{G_1}\biggr) \\
& =  \sum_{G_2 \in \R_2^*} \frac{e^{i b\cdot G_2}}{|\Gamma_2|^{1/2}} \biggl(\sum_{R_1\in\R_1,G_2' \in \R_2^*} [\tilde \hu_{11}]_{G_2-G_2',R_1}e^{iR_1\cdot G_2}\psi_{G_2'} + \sum_{G_1 \in \R_1^*}\frac{c_1^*c_2^*}{(2\pi)^2}\int_{\mathbb{R}^2}[ \hu_{12}](x)e^{-ix\cdot(G_2+G_1)}\psi_{G_1}dx\biggr) \\
&=   \biggl(\sum_{R_1\in\R_1,G_2' \in \R_2^*} [ \hu_{11}]_{R_1,G_2'}e^{iG_2'\cdot b}\tilde \G\psi_1(b+R_1) + \frac{1}{|\Gamma_2^*|}\sum_{G_2 \in \R_2^*}\int_{\mathbb{R}^2}[ \hu_{12}](x)e^{i(b-x)\cdot G_2}\tilde \G\psi_2(-x)dx\biggr) \\
& \biggl(\sum_{R_1\in\R_1} h_{R_1}(b)\tilde \G\psi_1(b+R_1) + \sum_{R_2 \in \R_2}[ \hu_{12}](b-R_2)\tilde \G\psi_2(-b+R_2)\biggr) \\
&= [\Gen^\cf(\hu)\tilde \G \psi]_1.
\end{split}
\end{equation*}
The same argument holds for the second component by symmetry, and thus we have \eqref{e:rp_cf}. As in the previous theorem, \eqref{e:rl_ms_operator} and \eqref{e:rp_cf_operator} follow by the transformation of the resolvent, and then continuity of the resolvent with respect to $z$. 

\end{proof}

\subsection{Proof of configuration and momentum space operator representations}
\label{proof:representation}

\begin{thmpf}
Suppose $O^\cf \in \OpConfig^\cf$ is constructed from the set $(g, \Gen^\cf(\hu_1),\cdots \Gen^\cf(\hu_n))$, $O_b^\rl \in \OpConfig^\rl$ is constructed from the set $(g, \Gen^\rl(t_b\hu_1),\cdots \Gen^\rl(t_b\hu_n))$, $O^\ms \in \OpConfig^\ms$ is constructed from the set $$(g, \Gen^\ms(\tilde \hu_1),\cdots \Gen^\ms(\tilde \hu_n)),$$ and $O_q^\rp \in \OpConfig^\rp$ is constructed from the set $(g, \Gen^\rp(t_q\tilde \hu_1),\cdots \Gen^\rp(t_q\tilde \hu_n))$. Then
\begin{align}
\label{e:cf_integral}
& \Tr \; O^\cf = \nu \biggl(\sum_{\alpha \in \A_1} \int_{\Gamma_2} [O_b^\rl]_{0\alpha,0\alpha}db + \sum_{\alpha \in \A_2} \int_{\Gamma_1} [O_b^\rl]_{0\alpha,0\alpha}db\biggr), \\
& \Tr \; O^\ms = \nu^* \biggl( \sum_{\alpha \in \A_1} \int_{\Gamma_1^*} [O_q^\rp]_{0\alpha,0\alpha}dq + \int_{\Gamma_2^*} [O_q^\rp]_{0\alpha,0\alpha}dq\biggr).
\label{e:ms_integral_pf}
\end{align}
\end{thmpf}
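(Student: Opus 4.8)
The plan is to transport the abstract operators $O^\cf$ and $O^\ms$ into real and reciprocal space using the ergodic unfolding already established in Theorem~\ref{thm:ergodicunfolding}, and then simply read off diagonal matrix entries and match them against the defining formulas for the configuration- and momentum-space traces. The whole argument is a short computation once the right representation is in place.

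First I would use that every element of $\OpConfig^\cf$ is covariant with respect to the torus symmetry operators $T_R$, so that $O^\cf$ admits an (effective) hopping-function representation $O^\cf = \Gen^\cf(\hu)$ for some $\hu \in \UnderlyingSpace^\cf_\herm$, with sheet-$j$ intralayer block $\hu_{jj}$. This is exactly the representation that makes the definition $\Tr\,[O^\cf]_{jj} = \int_{\Gamma_{3-j}} \tr\,[\hu_{jj}]_0(b)\,db$ meaningful. By Theorem~\ref{thm:ergodicunfolding} we then have $O_b^\rl = \blochmap_{\recip_b}(O^\cf) = \Gen^\rl(t_b\hu)$, which reduces the claim to reading entries of an explicit lattice operator.

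Next I would extract the diagonal entries of $\Gen^\rl(t_b\hu)$ directly from its definition. For $\alpha \in \A_j$ the intralayer block gives
\begin{equation*}
[O_b^\rl]_{0\alpha,0\alpha} = [(t_b\hu)_{jj}]_{0,\alpha\alpha}(0) = [\hu_{jj}]_{0,\alpha\alpha}\bigl((-1)^{j+1}b\bigr),
\end{equation*}
where the last equality uses $(t_b\hu)_{jj}(q,b') = \hu_{jj}(q, b' + (-1)^{j+1}b)$ together with the fact that the shift acts only on the configuration argument, leaving the zeroth Fourier mode in $q$ intact. The interlayer blocks never contribute, since they are off-diagonal in the sheet decomposition and the index $0\alpha$ lies in a single sheet. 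Summing over $\alpha \in \A_j$ turns the right-hand side into $\tr\,[\hu_{jj}]_0((-1)^{j+1}b)$, and integrating over $\Gamma_{3-j}$ — together with the measure-preserving substitution $b \mapsto -b$ on the torus for $j=2$ to absorb the sign — recovers exactly $\Tr\,[O^\cf]_{jj}$. Multiplying by $\nu$ and summing over $j$ then yields \eqref{e:cf_integral}. The momentum-space identity \eqref{e:ms_integral_pf} follows by the same template, now using $\tilde\recip_q$, the unfolding $O_q^\rp = \Gen^\rp(t_q\tilde\hu)$, and the reciprocal-space entry formula $(\Gen^\rp_{ii})_{G\alpha,G'\alpha'} = [\tilde\hu_{ii}]_{G-G',\alpha\alpha'}(G)$; here the shift $(t_q\tilde\hu)_{jj}(b,q') = \tilde\hu_{jj}(b,q'+q)$ carries the same sign on both sheets, so that $[O_q^\rp]_{0\alpha,0\alpha} = [\tilde\hu_{jj}]_{0,\alpha\alpha}(q)$ and no change of variables is needed, and integrating over $\Gamma_j^*$ and multiplying by $\nu^*$ gives the result.

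I expect the genuine obstacle to be conceptual rather than computational: one must justify that $O^\cf$ and $O^\ms$ truly possess effective hopping functions in $\UnderlyingSpace^\cf_\herm$ and $\UnderlyingSpace^\ms_\herm$ — i.e. that $\OpConfig^\cf$ is closed under the resolvent-and-contour functional calculus used to define it and retains the covariance and decay making the trace finite. Once that covariance and closure are granted, everything else is the careful bookkeeping of the torus shift $t_b$, the Fourier-mode indexing, and the sign convention $(-1)^{j+1}$ indicated above.
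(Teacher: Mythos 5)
Your proposal is correct and follows essentially the same route as the paper's own (very terse) proof: start from the definition of $\Tr\,O^\cf$ in terms of the zeroth Fourier mode of $\hu_{jj}$, and identify $[\hu_{jj}]_0(\pm b)$ with the diagonal entries $[O_b^\rl]_{0\alpha,0\alpha}$ of the ergodically unfolded operator, then repeat for momentum/reciprocal space. You merely spell out the entry computation, the $(-1)^{j+1}$ sign convention, and the $b\mapsto -b$ substitution that the paper leaves implicit, and you flag (without resolving, as the paper also does not) the implicit assumption that elements of $\OpConfig^\cf$ admit hopping-function representations.
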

\begin{proof}
{By definition of the trace, 
\begin{equation*}
\Tr \; O^\cf = \nu \biggl( \int_{\Gamma_2}\tr \; [\hu_{11}]_0(b)db + \int_{\Gamma_1}\tr\; [\hu_{22}]_0(b)db\biggr).
\end{equation*}}
Then \eqref{e:cf_integral} is verified by observing the relation {
\[
[\hu_{jj}(b)]_0 = \biggr[(\recip_b O^\cf \recip_b^*)_{0\alpha,0\alpha'}\biggr]_{\alpha,\alpha' \in \A_j}.
\]}
The proof for \eqref{e:ms_integral_pf} is the same.

\end{proof}

\subsection{Proof of equivalence of observables in all space}
\label{proof:observables}

\begin{thmpf}
For $O^\name \in \OpConfig^\name$ with hopping functions $\hu \in \UnderlyingSpace^\cf_\herm$ for real and configuration spaces, and $\tilde \hu$ for reciprocal and momentum spaces, we have
\begin{equation}
\TrLim \; O^\rp = \Tr \; O^\ms = \TrLim \; O^\rl = \Tr \; O^\cf.
\end{equation}
\end{thmpf}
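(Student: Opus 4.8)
The plan is to reduce the four-way identity to three separate links and then chain them: the two \emph{ergodic} links $\TrLim\, O^\rl = \Tr\, O^\cf$ and $\TrLim\, O^\rp = \Tr\, O^\ms$, together with the \emph{configuration--momentum duality} $\Tr\, O^\cf = \Tr\, O^\ms$. Concatenating these three equalities produces $\TrLim\, O^\rp = \Tr\, O^\ms = \Tr\, O^\cf = \TrLim\, O^\rl$, which is the asserted chain.

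For the ergodic links I would first record a covariance identity for the diagonal of the unfolded operator. For $R \in \R_j$ write $b_R \in \Gamma_{3-j}$ for its disregistry, i.e.\ $R$ reduced modulo the neighboring torus. Taking $O_0^\rl$ to be the unshifted unfolding (with hopping $\hu$) and $O_{b}^\rl$ the shifted unfolding of Theorem \ref{thm:ergodicunfolding}, I claim
\[
 [O_0^\rl]_{R\alpha,R\alpha} = [O_{b_R}^\rl]_{0\alpha,0\alpha}.
\]
At the generator level this is immediate: $[\Gen^\rl(\hu)]_{R\alpha,R\alpha} = [\hu_{jj}]_{0,\alpha\alpha}(R)$ depends on $R$ only through $R \bmod \Gamma_{3-j}=b_R$, while $[\Gen^\rl(t_{b_R}\hu)]_{0\alpha,0\alpha}$ evaluates the same periodic zero-mode at $b_R$. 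I would then propagate the identity through resolvents and the contour integral defining a general $O \in \OpConfig^\rl$, exactly as in the resolvent step that closes the proof of Theorem \ref{thm:ergodicunfolding}.

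With covariance in hand, $\TrLim\, O^\rl$ becomes a spatial average of $[O_{b_R}^\rl]_{0\alpha,0\alpha}$ over the disregistries. The analytic input is the Weyl equidistribution of $\{b_R : R \in \R_j,\ |R|<r\}$ in $\Gamma_{3-j}$ as $r\to\infty$, valid by incommensurability, which converts this average into $\tfrac{1}{|\Gamma_{3-j}|}\int_{\Gamma_{3-j}}[O_b^\rl]_{0\alpha,0\alpha}\,db$. Inserting the site densities $\#\{R\in\R_j:|R|<r\}\sim \pi r^2/|\Gamma_j|$ into the normalization $\#\Omega_r$, the weights collapse precisely onto the constant $\nu$, so $\TrLim\, O^\rl$ equals the right-hand side of Theorem \ref{thm:representation}, namely $\Tr\, O^\cf$. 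The verbatim argument over $\Omega^*$ — equidistribution of momenta modulo $\Gamma_j^*$ with density $\sim \pi r^2/|\Gamma_j^*|$ — gives $\TrLim\, O^\rp = \Tr\, O^\ms$.

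For the duality link I would compute the two traces directly. Integrating $[\hu_{jj}]_0(b)=\sum_{G}[\hu_{jj}]_{0G}e^{iG\cdot b}$ over $\Gamma_{3-j}$ annihilates every nonzero Fourier mode, giving $\Tr\,[O^\cf]_{jj} = |\Gamma_{3-j}|\,\tr[\hu_{jj}]_{00}$, and likewise $\Tr\,[O^\ms]_{jj} = |\Gamma_j^*|\,\tr[\tilde\hu_{jj}]_{00}$. Because the Bloch transform of Theorem \ref{thm:blochtransform} relates the configuration and momentum hoppings of $O$ in a way that identifies their $(0,0)$ Fourier modes, one has $[\tilde\hu_{jj}]_{00} = [\hu_{jj}]_{00}$, so the two intralayer traces agree. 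The prefactors then match through the elementary identities $\nu^*|\Gamma_1^*| = \nu|\Gamma_2|$ and $\nu^*|\Gamma_2^*| = \nu|\Gamma_1|$, which follow from $|\Gamma_j^*| = (2\pi)^2/|\Gamma_j|$, yielding $\Tr\, O^\ms = \Tr\, O^\cf$.

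I expect the ergodic link to be the crux. The delicate points are (i) extending the covariance identity from the bare generators to the contour integral defining $O$, and (ii) interchanging the $r\to\infty$ spatial average with that contour integral while upgrading equidistribution of the disregistries to convergence of the averaged diagonal sum. Both hinge on the exponential (Combes--Thomas) decay of the resolvent kernels supplied by the hopping-decay hypotheses, which renders the diagonal sums absolutely convergent and uniform in $b$; this is exactly where incommensurability, and the passage between the ergodic Hilbert space and the full $\ell^2(\Omega)$, enter the argument.
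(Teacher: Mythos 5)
Your decomposition into the two ergodic links plus one closing link is exactly the paper's skeleton, and your treatment of the ergodic links (the covariance identity $[O^\rl]_{R\alpha,R\alpha}=[O^\rl_{b_R}]_{0\alpha,0\alpha}$ propagated through resolvents, followed by equidistribution of the disregistries) matches the paper's proof, which sets $L_\alpha(b)=[O_b^\rl]_{0\alpha,0\alpha}$ and invokes the ergodic theorem of Theorem~2.1 in \cite{massatt2017} together with Theorem \ref{thm:representation}. Where you genuinely diverge is the closing link. The paper closes the chain with the \emph{vertical} identity $\Tr\, O^\ms=\TrLim\, O^\rl$, computed at the level of matrix elements: it writes $[O^\rl]_{R\alpha,R\alpha}=(\G e_{R\alpha})^*\,O^\ms\,(\G e_{R\alpha})$, expands this as a reciprocal-space ergodic average $\sum_{G}e^{-iR\cdot G}\mint_{\Gamma_j^*}[O^\rp_q]_{0\alpha,G\alpha}\,dq$, and then averages over $R$ to kill every nonzero $G$ mode. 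You instead close with the \emph{diagonal} identity $\Tr\, O^\cf=\Tr\, O^\ms$ by reducing both traces to the $(0,0)$ Fourier mode of the hopping functions and checking $\nu^*|\Gamma_1^*|=\nu|\Gamma_2|$, $\nu^*|\Gamma_2^*|=\nu|\Gamma_1|$; those normalization identities are correct, and the $(0,0)$ modes do agree under the Bloch-transform relation. Your route is more elementary (one ergodic theorem application instead of two, plus pure Fourier bookkeeping), but it carries a hidden load the paper's route avoids: you must know that the \emph{observable} $O$ itself --- a contour integral of resolvent products, generally non-self-adjoint --- admits hopping functions in configuration and momentum space that are related by the same transformation rule as the Hamiltonians', since the paper only states that rule for $\hu\in\UnderlyingSpace^\cf_\herm$. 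This is where the Combes--Thomas decay you flag at the end is really needed, and it is worth making explicit that the generator-level computation in Theorem \ref{thm:blochtransform} never uses Hermiticity, so it extends to the resolvents' hopping functions once their decay is established. The paper sidesteps this entirely by never writing down hopping functions for $O$ in the closing link, working only with its matrix elements.
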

\begin{proof}
We first verify $\TrLim \; O^\rl = \Tr \; O^\cf$, which comes from observing the thermodynamic limit trace of real space is an ergodic sampling of configuration space. Let
\[
L_\alpha(b) = [O_b^\rl]_{0\alpha,0\alpha}.
\]
Then $[O^\rl]_{R\alpha,R\alpha} = L_\alpha(R)$. And we obtain
\begin{equation*}
\begin{split}
\TrLim \; O^\rl &= \lim_{r \rightarrow \infty} \frac{1}{\# \Omega_r}\sum_{R\alpha \in \Omega_r} L_\alpha(R) \\
&= \lim_{r \rightarrow \infty}\biggl( \frac{\#\Omega_r \cap \Omega_1}{\# \Omega_r} \sum_{R\alpha \in \Omega_r \cap \Omega_1} L_\alpha(R) + \frac{\#\Omega_r \cap \Omega_2}{\# \Omega_r} \sum_{R\alpha \in \Omega_r \cap \Omega_2} L_\alpha(R)\biggr) \\
& = \nu \biggl( \sum_{\alpha \in \A_1}\int_{\Gamma_2} L_\alpha(b)db + \sum_{\alpha \in \A_2} \int_{\Gamma_1} L_\alpha(b)db \biggr),
\end{split}
\end{equation*}
where the last line follows from the ergodic theorem, as in Theorem 2.1 of \cite{massatt2017}. The equivalence of observables in real and configuration space is concluded by using Theorem \ref{thm:representation}. The proof of 
\[
\Tr \; O^\ms = \TrLim\; O^\rp
\]
is the same. We then focus \refchange{on} proving the last needed equality,
\[
\Tr \; O^\ms = \TrLim \; O^\rl.
\]
We let $e_{R\alpha}$ be the standard basis vector in $\X^\rl$ and $e_\alpha$ the standard basis vector in $\mathbb{C}^{\mathcal{A}_j}$ for $\alpha \in \A_j$. We denote $\tilde \recip_j$ acting on $\X_j^\ms$ as 
\[
\tilde \recip_j \psi = \{ \psi(G)\}_{G \in \R_i^*}
\]
for $i \neq j$. Observe that if $\phi, \psi \in \X_j^\ms$, then
\[
\dm{|\Gamma_j^*|^{-1/2}}\langle \phi,  \psi \rangle = \TrLim \; (\tilde\recip_j \phi)^* (\tilde\recip_j\psi).
\]
where $\TrLim$ here is understood as only being computed over $\Omega_j^*$, and $\langle \cdot, \cdot \rangle$ is the standard $L^2$ inner product over $\X_j^\ms$.

For $R\alpha \in \Omega_1$ (without loss of generality), we note
\begin{equation*}
\begin{split}
[ O^\rl]_{R\alpha,R\alpha} &= (\G e_{R\alpha})^* \G O^\rl \G^* (\G e_{R\alpha}) \\
&= |\Gamma_1^*|^{-1} \int_{\Gamma_1^*} e^{iR\cdot q} e_\alpha^*O^\ms e_\alpha e^{-i R\cdot q}dq \\
&= \TrLim \;  (\tilde\recip_1(e_\alpha e^{iR\cdot q}))^*  (\tilde \recip_1 O^\ms e_\alpha e^{-i R\cdot q}) \\
&= \TrLim \;  (\tilde\recip_1(e_\alpha e^{iR\cdot q}))^*  ( O^\rp \tilde \recip_1 e_\alpha e^{-i R\cdot q}) \\
&= \lim_{r\rightarrow \infty}\frac{1}{\# \{ G \in \R_2^* : |G| < r\}}\sum_{G,G' \in \R_2^*, |G| < r} e^{i R\cdot(G-G')} O^\rp_{G\alpha,G'\alpha} \\
&= \sum_{G \in \R_2^*} e^{-i R\cdot G} \mint_{\Gamma_1^*}[O_q^\rp]_{0\alpha,G\alpha}dq.
\end{split}
\end{equation*}
The last equality follows from
\[
[O_{G}^\rp]_{0\alpha,(G'-G)\alpha} = [O^\rp]_{G\alpha,G'\alpha}
\]
and the ergodic theorem in Theorem 2.1 \cite{massatt2017}.
To understand $\TrLim \; O^\rl$ then, we observe 
\begin{equation*}
\lim_{r\rightarrow \infty} \frac{1}{\# \{ R \in \R_1: |R| < r\} } \sum_{R \in \R_1: \; |R| < r} e^{-i R \cdot G}  = \delta_{G0} .
\end{equation*}
We then obtain
\begin{equation*}
\begin{split}
\TrLim \; O^\rl &= \lim_{r\rightarrow \infty} \frac{1}{\# \Omega_r}\sum_{j=1}^2 \sum_{G\alpha \in \Omega_j^*}   \sum_{R \in \R_j :\; |R| < r} e^{-i R \cdot G} \mint_{\Gamma_j^*}[O^\rp_q]_{0\alpha,G\alpha}dq\\
&= \nu^*\sum_{j=1}^2\sum_{\alpha \in \A_j}  \int_{\Gamma_j^*} [O_q^\rp]_{0\alpha,0\alpha}dq \\
&= \Tr \; O^\ms.
\end{split}
\end{equation*}
This concludes the proof.

\end{proof}

\subsection{Proof of incommensurate Brillouin Zone representation}
\label{proof:prop}
\begin{proppf}
If $G_j = (-1)^{j}2\pi A_j^{-T} n$ for $n \in \mathbb{Z}^2$ and
\begin{equation}
\label{e:prop_eq}
\delta q = \Theta_{21}n = 2\pi (A_2^{-T} - A_1^{-T})n,
\end{equation}
then \dm{
\[
[H_r(q)]_{(G_j+G_j')\alpha,(G_\ell + \tilde G_\ell)\alpha'} = [H_r(q+\delta q)]_{G_j'\alpha,\tilde G_\ell\alpha'}, \hspace{2cm}G_j' \in \R_j^*, \; \tilde G_\ell \in \R_\ell^*
\]}
as long as $q, q+\delta q \in {\Gamma_j^*(\energy+B_\eta)+B_r}$ for $j = 1,2$.
\end{proppf}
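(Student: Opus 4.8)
The plan is to strip off the finite projections and reduce to the unprojected operator together with a separate statement about the index sets. Since $H_r(q)=J_r^*(q)\,\Gen^\rp(t_q\tilde\hu^{(\tau)})\,J_r(q)$ merely selects those entries of $\Gen^\rp(t_q\tilde\hu^{(\tau)})$ whose row and column labels lie in $\dof_r(q)=I\bigl(\Omega_r^*(q,\energy+B_\eta)\bigr)$, the claim factors into two independent assertions: first, the entrywise identity
\[
[\Gen^\rp(t_q\tilde\hu^{(\tau)})]_{(G_j+G_j')\alpha,(G_\ell+\tilde G_\ell)\alpha'}=[\Gen^\rp(t_{q+\delta q}\tilde\hu^{(\tau)})]_{G_j'\alpha,\tilde G_\ell\alpha'}
\]
for the full operator, and second, that the offset relabeling $g\mapsto g+G_j$ on a sheet-$(3-j)$ label (whose offset lies in $\R_j^*$) carries $\dof_r(q+\delta q)$ bijectively onto $\dof_r(q)$. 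The one algebraic input used throughout is $\delta q=\Theta_{21}n=G_1+G_2$, immediate from the definitions of $G_1,G_2,\Theta_{21}$.

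For the entrywise identity I treat the two block types separately. In an intralayer block both labels sit in the same sheet $s=3-j$ (so $j=\ell$) and the internal Fourier sum runs over $R\in\R_{3-j}$. Unfolding the definitions of $\Gen^\rp_{i\leftarrow i}$ and of $t_{q'}$, the left-hand entry is $\sum_{R\in\R_{3-j}}[\tilde\hu^{(\tau)}_{ss}]_{(G_j'-\tilde G_j)R,\alpha\alpha'}\,e^{iR\cdot(G_j+G_j'+q)}$ and the right-hand entry is $\sum_{R\in\R_{3-j}}[\tilde\hu^{(\tau)}_{ss}]_{(G_j'-\tilde G_j)R,\alpha\alpha'}\,e^{iR\cdot(G_j'+q+\delta q)}$. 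The mode index $G_j'-\tilde G_j$ is the same on both sides, so the Fourier-mode truncation acts identically; the two phases differ only by $e^{iR\cdot(G_j-\delta q)}$, and since $G_j-\delta q=-G_{3-j}\in\R_{3-j}^*$ is dual to the lattice $\R_{3-j}$ over which $R$ ranges, $R\cdot(G_j-\delta q)\in2\pi\mathbb{Z}$ and the extra phase is trivial. In an interlayer block $j\neq\ell$, so $\{j,\ell\}=\{1,2\}$ and the entry is $c_1^*c_2^*\,\chi_\tau(\zeta)\,\tilde\hu_{ij}(\zeta)$ with $\zeta$ the total momentum formed by the two offsets and the base point. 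On the left $\zeta=(G_j+G_j')+(G_\ell+\tilde G_\ell)+q=(G_1+G_2)+G_j'+\tilde G_\ell+q$, while on the right $\zeta=G_j'+\tilde G_\ell+(q+\delta q)$; because $G_1+G_2=\delta q$ these arguments coincide, so both $\chi_\tau$ and $\tilde\hu_{ij}$ agree and the entries match.

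For the index sets I use the offset shift $\Psi$ on $\Omega^*$ given by $g\alpha\mapsto(g+G_2)\alpha$ on sheet $1$ and $g\alpha\mapsto(g+G_1)\alpha$ on sheet $2$; this is the relabeling in the statement and is a bijection. The crucial point is that $\Psi$ is compatible with the torus-reduced wavenumbers: on sheet $1$ the relevant torus is $\Gamma_1^*$, and the image wavenumber at base $q$, namely $q+g+G_2$, differs from the source wavenumber at base $q+\delta q$, namely $(q+\delta q)+g$, by $G_2-\delta q=-G_1\in\R_1^*$, hence represents the same point of $\Gamma_1^*$; the sheet-$2$ computation is identical with $-G_2\in\R_2^*$. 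Therefore $\Psi$ maps $\Omega_r^*(q+\delta q,\energy+B_\eta)$ bijectively onto $\Omega_r^*(q,\energy+B_\eta)$ and, preserving sheet index and torus-wavenumber, respects the adjacency that defines connected regions, so it sends isolated components to isolated components. The hypothesis $q,q+\delta q\in\Gamma_j^*(\energy+B_\eta)+B_r$ guarantees that the origin labels belong to both sets and that the wavenumbers $q$ and $q+\delta q$ lie in the same isolated valley region, so $\Psi$ carries the origin-containing component $\dof_r(q+\delta q)$ onto $\dof_r(q)$, matching the projected entries.

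The entry identity is the routine part: once the two facts $G_j-\delta q\in\R_{3-j}^*$ and $G_j+G_\ell=\delta q$ are isolated, both blocks match by direct substitution, and the truncation is transparent since it depends only on offset differences (intralayer) or total momenta (interlayer), each of which is preserved. I expect the main obstacle to be the index-set step: one must track that the two sheets are reduced modulo different reciprocal lattices $\R_1^*$ and $\R_2^*$, check that the moir\'e shift $\delta q$ is absorbed into those reductions, and verify that the isolated-component extraction $I(\cdot)$---whose connectedness notion is only cited from \cite{momentumspace17}---is genuinely invariant under $\Psi$, so that the origin component is sent to the origin component and not to a neighboring valley.
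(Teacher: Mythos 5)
Your proof is correct and the core computation is the same as the paper's: for intralayer entries you match Fourier modes and absorb the leftover phase $e^{iR\cdot(G_j-\delta q)}$ using $G_j-\delta q=-G_{3-j}\in\R_{3-j}^*$ (the paper does this implicitly via periodicity of $[\tilde\hu_{jj}]_G$ in its momentum argument), and for interlayer entries you use $G_1+G_2=\delta q$ exactly as the paper does. The one substantive addition is your verification that the relabeling carries $\dof_r(q+\delta q)$ bijectively onto $\dof_r(q)$ so that the two finite matrices really have corresponding index sets; the paper's proof only computes entries of the unprojected operator and leaves this domain correspondence implicit, so your extra step is a welcome tightening rather than a divergence.
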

\begin{proof}
For intralayer coupling of sheet 1, starting from the left-hand side of \eqref{e:prop_eq} we obtain
\begin{equation*}
\begin{split}
[H_r(q)]_{(G_2 + G_2')\alpha,(G_2 + \tilde G_2)\alpha'} &= [\tilde \hu^{(\tau)}_{11}]_{G_2'-\tilde G_2,\alpha\alpha'}(q + G_2 + G_2') \\
&= [\tilde \hu^{(\tau)}_{11}]_{G_2'-\tilde G_2,\alpha\alpha'}(q + \delta q + G_2') \\
&= [H_r(q+\delta q)]_{G_2'\alpha, \tilde G_2\alpha'}.
\end{split}
\end{equation*}
The case of intralayer sheet 2 is identical.
We next consider interlayer coupling from sheet 2 to sheet 1:
\begin{equation*}
\begin{split}
[H_r(q)]_{ (G_2 + G_2')\alpha,(G_1 + \tilde G_1)\alpha'} &= [\tilde \hu_{12}^{(\tau)}]_{\alpha\alpha'}(q+G_2 + G_2' + G_1 + \tilde G_1) \\
&=[\tilde \hu_{12}^{(\tau)}]_{\alpha\alpha'}(q+\delta q + G_2'  + \tilde G_1) \\
&= [H_r(q+\delta q)]_{G_2'\alpha,\tilde G_1\alpha'}.
\end{split}
\end{equation*}

\end{proof}

\subsection{Proof of numerical convergence rate}
\label{proof:convergence}

\begin{thmpf}
Consider incommensurate bilayer system as described above with long moir\'{e} length scale, \refchange{i.e. using Assumption \ref{assump:small}}. Consider $E \in \energy$, and $\varepsilon \ll 1$. Let $\tau > 0$ be a hopping truncation. Then there are constants $\gamma_h$, $\gamma_m$, and $\gamma_g$ corresponding to hopping truncation error, momenta truncation error, and Gaussian decay rates respectively such that
\begin{equation}
\label{e:error_bounds}
    \biggl| D_\varepsilon(E) - D_{\varepsilon,r}(E) \bigg| \lesssim \varepsilon^{-3/2}(e^{-  \gamma_h  \tau} + \varepsilon^{-2}e^{ - \gamma_m r}+e^{- \gamma_g \varepsilon^{-2}} ) 
\end{equation}
where 
\[
D_{\varepsilon,r}(E) = \nu^*\int_{\Gamma_{21}^*} \Tr \; \phi_\varepsilon(E-H_r(q))dq.
\]
When mechanical relaxation effects are not included, i.e.
\begin{equation}
\Gen^\ms(\tilde \hu) \in \OpConfig^\ms(\tilde \gamma,  \gamma),
\end{equation}
then we have $\gamma_h$ is independent of $\theta$, but $ \gamma_m = O( \theta^{-1})$. Meanwhile if mechanical relaxation effects are included, i.e. 
\begin{equation}
\Gen^\ms(\tilde \hu) \in \OpConfig^\ms(\tilde \gamma \theta,  \gamma),
\end{equation}
then we have $\gamma_h = O(\theta)$ and $\gamma_m = O(1)$.
\end{thmpf}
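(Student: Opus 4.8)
The plan is to pass to momentum/reciprocal space, represent the smeared density of states through the holomorphic functional calculus, and split the total error into the three advertised contributions, each controlled by a separate mechanism: a Gaussian tail estimate, an operator-norm perturbation, and a Combes--Thomas resolvent decay.

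First I would use Theorem \ref{thm:observables} to write $D_\varepsilon(E) = \Tr\,\phi_\varepsilon(E - \Gen^\ms(\tilde\hu))$, and then Theorem \ref{thm:representation} to express this as
\[
D_\varepsilon(E) = \nu^*\sum_{j=1}^2\sum_{\alpha\in\A_j}\int_{\Gamma_j^*}\bigl[\phi_\varepsilon\bigl(E - \Gen^\rp(t_q\tilde\hu)\bigr)\bigr]_{0\alpha,0\alpha}\,dq .
\]
By Proposition \ref{prop:inc_BZ} the integrand is periodic, up to relabeling of reciprocal sites, under the incommensurate lattice $\Theta_{21}\mathbb Z^2$; this lets me reorganize the two monolayer Brillouin-zone integrals as an integral over $\Gamma_{21}^*$ of a diagonal sum centered on each starting momentum $q_j$, so that the comparison with $D_{\varepsilon,r}(E)$ reduces to (i) replacing the exact hopping $\tilde\hu$ by its truncation $\tilde\hu^{(\tau)}$, (ii) replacing the full reciprocal-lattice operator by the finite cluster matrix $H_r(q) = J_r^*(q)\Gen^\rp(t_q\tilde\hu^{(\tau)})J_r(q)$, and (iii) the windowing of $\phi_\varepsilon$ onto $\energy + B_\eta$. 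I would insert $\phi_\varepsilon = \chi\phi_\varepsilon + (1-\chi)\phi_\varepsilon$ with $\chi$ a smooth cutoff equal to one on $\energy+B_\eta$; since $E\in\energy$ and the monolayer spectra seen by $\Gen^\rp(t_q\tilde\hu)$ lie within $\eta/(2+\alpha)$ of the window by the choice of $\eta$ in \eqref{e:eta}, the contribution of $(1-\chi)\phi_\varepsilon$ is bounded by the Gaussian tail $\lesssim\varepsilon^{-1/2}e^{-\gamma_g\varepsilon^{-2}}$ with $\gamma_g\sim\eta^2/2$ (absorbed into the stated $\varepsilon^{-3/2}$ prefactor), which is the third error term; this step also justifies discarding the far-energy modes so that the retained cluster $\dof_r(q)$ is finite.

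Having reduced to the windowed, hence compactly supported, function $\chi\phi_\varepsilon$, I would represent it by the Helffer--Sjöstrand formula
\[
(\chi\phi_\varepsilon)(E - H) = \frac{1}{\pi}\int_{\mathbb C}\bar\partial\widetilde{\Phi}(z)\,(z - H)^{-1}\,dx\,dy,
\]
where $\widetilde\Phi$ is an almost-analytic extension of $\lambda\mapsto(\chi\phi_\varepsilon)(E-\lambda)$ supported in a bounded complex neighborhood of $\energy+B_\eta$; the derivative bounds $|\partial^k\phi_\varepsilon|\lesssim\varepsilon^{-1/2-k}$ are what produce the $\varepsilon^{-3/2}$ prefactor and, through the resolvent blow-up $\|(z-H)^{-1}\|\le|\Im z|^{-1}$, the extra $\varepsilon^{-2}$ attached to the momentum term. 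For the hopping-truncation error (i) I would use the operator-Lipschitz estimate coming from this representation to bound the difference by $\lesssim\varepsilon^{-3/2}\|\Gen^\rp(t_q(\tilde\hu-\tilde\hu^{(\tau)}))\|_\op$; the tails of $\tilde\hu$ beyond radius $\tau$ are controlled by the decay class in \eqref{e:relax_hop2}, giving $\lesssim e^{-\gamma_h\tau}$ with $\gamma_h=\tilde\gamma$ in the unrelaxed case $\OpConfig^\ms(\tilde\gamma,\gamma)$ (independent of $\theta$) and $\gamma_h=\tilde\gamma\theta=O(\theta)$ in the relaxed case $\OpConfig^\ms(\tilde\gamma\theta,\gamma)$.

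The heart of the argument, and the step I expect to be the main obstacle, is the momentum-truncation error (ii). Since the diagonal site $0\alpha$ is retained in $\dof_r(q)$ and $\widetilde\Phi$ is supported at distance $\gtrsim\varepsilon$ from the spectrum, I would apply a Combes--Thomas estimate to $(z-\Gen^\rp(t_q\tilde\hu^{(\tau)}))^{-1}$ to obtain exponential decay of its matrix elements from site $0$ to the boundary of $\dof_r(q)$, at physical-momentum distance $\sim r$; removing the complement of the cluster then costs $\lesssim\varepsilon^{-2}e^{-\gamma_m r}$ after integrating against $\bar\partial\widetilde\Phi$. The delicate point is the $\theta$-dependence of the Combes--Thomas rate measured in \emph{physical momentum distance}, which equals the hopping decay rate in momentum times the number of reciprocal-lattice hops per unit physical distance: in the unrelaxed case the coupling range in momentum is $O(1)$ while cluster sites are spaced at the moir\'e scale $\theta$, so traversing distance $r$ requires $\sim r/\theta$ hops and $\gamma_m=O(\theta^{-1})$; in the relaxed case \eqref{e:relax_hop2} makes the momentum coupling range grow like $\theta^{-1}$, so $O(1)$ hops suffice and $\gamma_m=O(1)$. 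I would also verify here that $\dof_r(q)$ stays finite, equivalently that $\Gamma_j^*(\energy+B_\eta)+B_r$ is homotopically trivial, which is exactly the constraint flagged in the remark after the theorem. Assembling the three bounds by the triangle inequality, factoring out the common $\varepsilon^{-3/2}$, and reading off the $\theta$-scalings of $\gamma_h$ and $\gamma_m$ yields \eqref{e:error_bounds}.
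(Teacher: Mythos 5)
Your overall architecture matches the paper's: a three-way error split (hopping truncation, momentum truncation, spectral tail), with the hopping truncation controlled by an operator-norm perturbation of size $\varepsilon^{-3/2}e^{-\gamma_h\tau}$ and the tail by the Gaussian decay of $\phi_\varepsilon$ away from the energy window. Two cosmetic differences: since $\phi_\varepsilon$ is entire, the paper works with a contour $C$ at distance $\sim\varepsilon$ from the spectrum split into $C_\pm$, rather than cutting off $\phi_\varepsilon$ and invoking Helffer--Sj\"ostrand with an almost-analytic extension; and the reduction to $\Gamma_{21}^*$ via Proposition \ref{prop:inc_BZ} is as you describe. Your reading of the $\theta$-scalings of $\gamma_h$ and $\gamma_m$ is also the intended one.

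The gap is in the momentum-truncation step. A Combes--Thomas estimate for $(z-\Gen^\rp(t_q\tilde\hu^{(\tau)}))^{-1}$ keyed, as you propose, to the distance of $z$ from the spectrum gives a decay rate proportional to that distance, i.e.\ $O(\varepsilon)$ on $C_+$; this yields at best $e^{-c\varepsilon r}$, which degenerates as $\varepsilon\to 0$ and does not produce the claimed $\varepsilon^{-2}e^{-\gamma_m r}$ with $\gamma_m$ independent of $\varepsilon$ (and if run through Helffer--Sj\"ostrand with an order-$N$ almost-analytic extension, it only recovers polynomial decay in $r$). The paper's mechanism is different, and is the reason the buffer $\eta$ of \eqref{e:eta} was introduced: it decomposes $\Omega_r^*(q,\energy+B_\eta)$ into concentric rings of width $\sim\theta\tau$ so that $\Gen^\rp(t_q\tilde\hu^{(\tau)})$ is nearest-neighbor in ring index, then iterates the Schur complement outward. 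The per-ring contraction is $\|H_{j-1,j}R_j\|_\op\le\beta/(1+\alpha/2)<1$, where the single-ring resolvent $R_j=(z-H_{jj})^{-1}$ is bounded by $\sim\eta^{-1}$ --- not $\varepsilon^{-1}$ --- because the monolayer energies $\sigma_j(q+G)$ on the outer rings lie outside $\energy+B_\eta$ while $\Real(z)$ stays in $\energy+B_{\eta'}$. That spectral detuning of the discarded momenta, balanced against the interlayer coupling strength in \eqref{e:eta}, is what makes $\gamma_m$ uniform in $\varepsilon$; your argument would need to replace the global Combes--Thomas bound with an estimate relative to the block-diagonal monolayer part (which is essentially what the ring/Schur iteration is) before this step closes.
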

\begin{proof}
We begin by quantifying truncation error. We observe that
\[
\| \Gen^\rp(\tilde \hu) - \Gen^\rp(\tilde \hu^{(\tau)})\|_\op \lesssim e^{-\gamma_h \tau}.
\]
We define
\[
D_\varepsilon^{(\tau)}(E) = \TrLim\; \phi_\varepsilon(E-\Gen^\rl(\tilde \hu^{(\tau)})).
\]
We note as long as $\tau$ is sufficiently large, we have
\[
|D_\varepsilon(E) - D_\varepsilon^{(\tau)}(E)| \lesssim \max_{E'}|\phi_\varepsilon'(E')|e^{-\gamma_h \tau} \lesssim \varepsilon^{-3/2}e^{-\gamma_h\tau}.
\]
By Proposition \ref{prop:inc_BZ}, we have
\[
D_{\varepsilon,r}(E) = \dm{\sum_{k=1}^n\nu^*\int_{\Gamma_{21^*}+q_k}} \Tr \; \phi_\varepsilon(E-H_r(q))dq = \nu^* \sum_{j=1}^2 \sum_{\alpha \in \A_j} \int_{\Gamma_j^*} [\phi_\varepsilon(E-H_r(q))]_{0\alpha,0\alpha}dq.
\]
{We note that we can expand the integral above to all momenta in $\Gamma_j^*$ as $H_r(q)$ is the empty matrix for $q \not\in {\Gamma_j^*(\energy+B_\eta)+B_r}$, in which case we consider $[\phi_\varepsilon(E-H_r(q)]_{0\alpha,0\alpha} =0$.}
We thus proceed with the right-hand side. 
For simplicity of notation, we denote for matrices $B(q)$
\[
T(B) = \nu^* \sum_{j=1}^2 \sum_{\alpha \in \A_j} \int_{\Gamma_j^*} [B(q)]_{0\alpha,0\alpha}dq.
\]
So in particular,
\[
D_{\varepsilon,r}(E) = T(\phi_\varepsilon(E-H_r)).
\]
Since we expect different energies to contribute differently to error, we construct a contour $C$ around the spectrum of the Hamiltonian such that $d(C, \Gen^\rp(\tilde \hu)) \in (\varepsilon,2\varepsilon)$. If the spectrum has gaps, then $C$ would not be a simple curve in the complex \refchange{plane} but a union of one per ungapped interval of spectrum. We next divide $C$ into two regions, 
\begin{align*}
& C_+ = \{ z \in C : \Real(z) \in \energy + {B_{\eta'}} \}, & C_- \in C \setminus C_+.
\end{align*}
{Here $\eta' = \frac{\alpha \eta}{2(2+\alpha)}$}
We observe for $z \in C_-$, there is a $\gamma_g > 0$ such that
\[
|\phi_\varepsilon(E - z)|  \lesssim \varepsilon^{-1/2} e^{-\gamma_g \varepsilon^{-2}}.
\]
We observe
\begin{equation*}
\begin{split}
D_{\varepsilon,r}(E) &= \frac{1}{2\pi i} \oint_Cg(z) T( (z-H_r)^{-1})dz \\
&= \frac{1}{2\pi i} \biggl(\oint_{C_+}g(z) T( (z-H_r)^{-1})dz + \oint_{C_-}g(z) T( (z-H_r)^{-1})dz\biggr).
\end{split}
\end{equation*}
Likewise
\begin{equation*}
\begin{split}
D_{\varepsilon}^{(\tau)}(E) &= \frac{1}{2\pi i} \oint_Cg(z) T( (z-\Gen^\rp(t_{(\cdot)}\tilde \hu^{(\tau)}))^{-1})dz \\
&= \frac{1}{2\pi i} \biggl(\oint_{C_+}g(z) T( (z-\Gen^\rp(t_{(\cdot)}\tilde \hu^{(\tau)}))^{-1})dz + \oint_{C_-}g(z) T( (z-\Gen^\rp(t_{(\cdot)}\tilde \hu^{(\tau)}))^{-1})dz\biggr).
\end{split}
\end{equation*}
The second terms in both equations are bounded up to a constant by $\varepsilon^{-3/2}e^{-\gamma_g \varepsilon^{-2}}$, completing the Gaussian tail error term in \eqref{e:error_bounds}. To complete the error bound, it suffices to show
\[
\biggl\| \oint_{C_+} g(z)\bigl( T( (z-\Gen^\rp(t_{(\cdot)}\tilde \hu^{(\tau)}))^{-1}) - T( {(z-H_r)^{-1}}\bigr)dz\biggr\|_\op \lesssim \varepsilon^{-3/2}e^{-\gamma_m r}.
\]
Observation of the operators shows it is sufficient to prove for arbitrary $z \in C_+$ and momenta $q \in \Gamma_1^*$ (without loss of generality) that
\begin{equation}
\label{e:resolvent_error}
\biggl\| [(z-\Gen^\rp(t_q\tilde \hu^{(\tau)})^{-1}]_{0\alpha,0\alpha} - [(z - H_r(q))^{-1}]_{0\alpha,0\alpha} \biggr\|_\op \lesssim \varepsilon^{-1}e^{-\gamma_m r}.
\end{equation}
Here $\alpha \in \A_1$. The principle technique here is a ring decomposition. We will define an increasing collection of radii $r_0,\cdots, r_n$ such that $r_0 = 0$ and $r_n = r$. We write $H = \Gen^\rp(\tilde \hu^{(\tau)})$. We have the following decomposition:
\begin{align*}
& U_0 = \Omega_{r_0}^*(q), \\
& U_j = \Omega_{r_j}^*(q) \setminus \Omega_{r_{j-1}}^*(q), \; j > 0, \\
& J_{j} = J_{\Omega^* \leftarrow U_j},\\
& H_{ij} = J_i^* H J_j.
\end{align*}
We assume one final ring denoted `$\infty$' that corresponds to remaining degrees of freedom, i.e.
\[U_\infty = \Omega^* \setminus \Omega_r(q).\] 
We choose $n$ and $r_j = j/n$ in such a fashion that $H_{ij} = 0$ if $|i-j| > 1$ so that the rings form a ``nearest neighbor" type decomposition (see Figure \ref{fig:rings}), which can be achieved as sites couple in a distance $\tau$. This can be achieved for $n$ proportional to $\theta \tau$ with correctly chosen proportionality constant, since distance in momenta is on the inverse moir\'{e} scale while $\tau$ is on the lattice scale. {For simplicity of notation, we assume $n$ is divisible by $4$.} We observe
\begin{equation*}
H = \begin{pmatrix} H_{00} & H_{01} & 0 & 0 & \cdots  \\ H_{10} & H_{11} & H_{12}  & 0 &\cdots \\
0 & H_{21} & H_{22} & H_{23} & \cdots  \\\vdots & \ddots & \ddots & \ddots & \ddots  \\
0&  0 & \cdots & H_{\infty,n} & H_{\infty,\infty}\end{pmatrix}
\end{equation*}
\begin{figure}[ht]
\centering
\includegraphics[width=.45\textwidth]{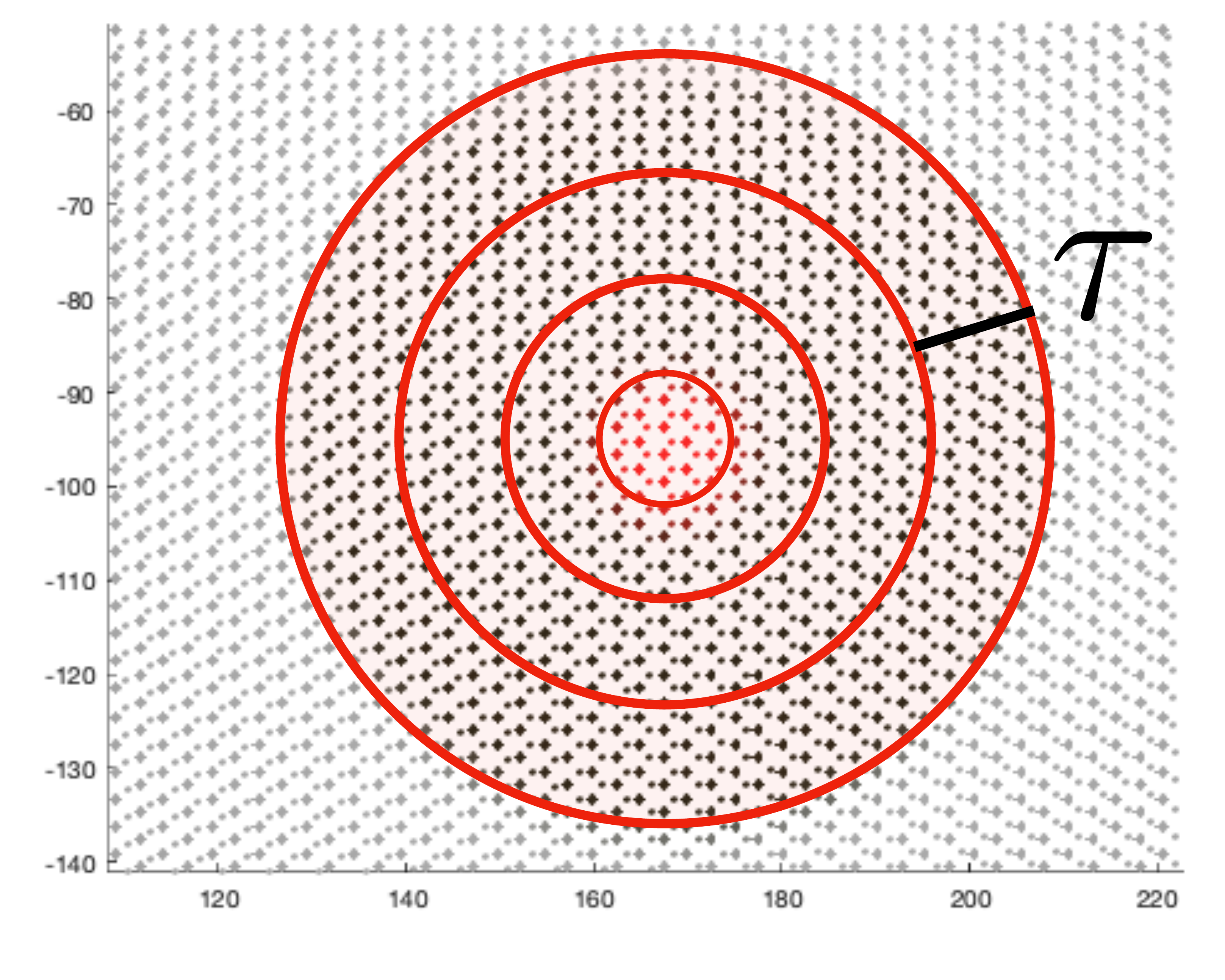}
\caption{The rings correspond to the sets $U_j$. $\R_1^* \cup \R_2^*$ are displayed.}
\label{fig:rings}
\end{figure}
We let $H_{i\leftrightarrow j}$ correspond to the matrix restricted to the rings $i$ through $j$ for $i < j$. As a slight abuse of notation we also use 
\[J_i = J_{\cup_{j=\ell}^m U_j \leftarrow U_i}\]
where the choice of definition of $J_i$ will be clear from the context. 
With our ring decomposition fully constructed, we now employ Schur complement techniques to prove \eqref{e:resolvent_error}. We use the following resolvent notation:
\begin{align*}
& R_{i \leftrightarrow j} = (z - H_{i\leftrightarrow j})^{-1}, & R_j = (z-H_{jj})^{-1}, \hspace{1.5cm} & R = (z-H)^{-1}.
\end{align*}
We also denote the natural injection for the full approximation
\[
J = J_{\Omega^* \leftarrow \Omega^*_r(q) }.
\]
We recall the general Schur complement formula  for matrices $A, B, C, D$, $M = \begin{pmatrix} A & B \\ C & D\end{pmatrix}$, and $M / A := D - CA^{-1}B$ is
\begin{equation}
\label{e:schur}
M^{-1} = \begin{pmatrix} A^{-1} + A^{-1}B(M/A)^{-1}CA^{-1} & -A^{-1}B(M/A)^{-1} \\ -(M/A)^{-1} CA^{-1} & (M/A)^{-1}\end{pmatrix}.
\end{equation}
{We denote the ring that $q$ lives on as $k \in \{0,\cdots n,\infty\}$. First we consider $k \leq n /2$.}
In the newly constructed notation, we observe, using an application of Schur complement for the ring decomposition via the formula for $M_{11}^{-1}$ above,
{\begin{equation*}
\begin{split}
\biggl\| J_k^* &(z-H)^{-1} J_k - J_k^*(z - H_{0 \leftrightarrow n})^{-1}J_k \biggr\|_\op = \biggl\| J_k^* R J_k - J_k^*R_{0 \leftrightarrow n}J_k \biggr\|_\op \\
&= \biggl\| J_k^*\bigl(R_{0\leftrightarrow n}+R_{0\leftrightarrow n}J^*H J_\infty J_\infty^* R J_\infty J_\infty^*HJR_{0\leftrightarrow n}\bigr)J_k - J_k R_{0\leftrightarrow n}J_k \biggr\|_\op \\
&= \biggl\| J_k^*R_{0\leftrightarrow n}J^*H J_\infty J_\infty^* R J_\infty J_\infty^*HJR_{0\leftrightarrow n}\bigr)J_k  \biggr\|_\op \\
&\lesssim \varepsilon^{-2}\| J_k^* R_{0\leftrightarrow n}J_n \|_\op.
\end{split}
\end{equation*}}
The last inequality is found by noting $J^*HJ_\infty J_\infty^*$ only can couple ring $n$ on the left to ring $\infty$ on the right, as $J_\infty J_\infty^*$ is the projection onto the `$\infty$' ring over $\X^\rp$ and $H$ is nearest neighbor in ring coupling. Matching the Schur complement expression \eqref{e:schur} to the second line, we have
\begin{align*}
&A = H_{0\leftrightarrow n}, & B = J^*HJ_\infty \\
& C =J^*HJ_\infty, & D = J_\infty^*RJ_\infty.
\end{align*}
We rewrite ${J_k^*R_{0\leftrightarrow n}J_n}$ using the $M_{12}^{-1}$ entry in \eqref{e:schur} in an interative fashion as follows:{
\begin{equation}
\label{e:exp_bound}
    \begin{split}
    J_k^*R_{0\leftrightarrow n}J_n &= -J_k R_{0\leftrightarrow n-1} H_{n-1,n} R_n \\
    &= J_k^* R_{0 \leftrightarrow n-2}H_{n-2,n-1}R_{n-1} H_{n-1,n}R_n \\
    & \hspace{2mm}\vdots \\
    &= (-1)^{n-k}J_k^* R_{0 \leftrightarrow k} J_k\prod_{j=k+1}^n H_{j-1,j}R_j
    \end{split}
\end{equation}}
We now observe, recalling the definition of $\eta$ in $\eqref{e:eta}$:{
\begin{equation*}
\begin{split}
\| H_{j-1,j}R_j \|_\op &\lesssim \|\Gen^\rp(\tilde \hu) - \Gen^\rp(\mon) \|_\op \| (z - H_{jj})^{-1}\|_\op \\
&\leq \|\Gen^\rp(\tilde \hu) - \Gen^\rp(\mon) \|_\op \cdot \biggl( \| z - J_j^*\Gen^\rp(t_q \mon)J_j\|_\op - \\
&\hspace{1cm}\| H_{jj} - J_j^*\Gen^\rp(t_q\mon)J_j\|_\op\biggr)^{-1} \\
& \leq \frac{\beta}{1+\alpha/2}.
\end{split}
\end{equation*}
Here
\[
\beta = \frac{\|\Gen^\rp(\tilde \hu) - \Gen^\rp(\mon) \|_\op}{\| \| H_{jj} - J_j^*\Gen^\rp(t_q\mon)J_j\|_\op}.
\]
For $\alpha$ sufficiently large relative to $\beta$, we have $\beta(1+\alpha/2)^{-1} < 1$. Note as $\tau \rightarrow \infty$, $\beta \rightarrow 1$, so in practice we don't need $\alpha$ large.
}Taking an operator bound in \eqref{e:exp_bound}, we obtain
\begin{equation}
    \label{e:exp_bound_rings}
    \| J_0 R_{0\leftrightarrow n}J_n^*\|_\op \lesssim \varepsilon^{-1}\biggl({\frac{\beta}{1+\alpha/2}\biggr)^{n/2}} = \varepsilon^{-1}e^{-\lambda r}
\end{equation}
for some $\lambda >0$. Here we used $r \sim n$. The momenta cut-off term in the error is now justified {for $k \leq n/2$. Next we consider $k > n/2$.}
{
\begin{equation*}
\begin{split}
\biggl\|& J_k^*  (z-H)^{-1} J_k - J_k^*(z - H_{0 \leftrightarrow n})^{-1}J_k \biggr\|_\op \\
&= \biggl\| J_k^*R_{0\leftrightarrow n/4}J_k +  J_k^* R_{0\leftrightarrow n/4}J_{n/4}H_{n/4,n/4+1}J_{n/4+1}^*(z-H)^{-1}J_{n/4+1}H_{n/4+1,n/4}J_{n/4}^*R_{0\leftrightarrow n/4} J_k \\
&- J_k^*R_{0\leftrightarrow n/4}J_k-J_k^* R_{0\leftrightarrow n/4}J_{n/4}H_{n/4,n/4+1}J_{n/4+1}^*(z-H_{0\leftrightarrow n})^{-1}J_{n/4+1}H_{n/4+1,n/4}J_{n/4}^*R_{0\leftrightarrow n/4} J_k \biggr\|_\op\\
& \leq \eps^{-2} \| J_k^*R_{0\leftrightarrow n/4}J_{n/4}\|_\op.
\end{split}
\end{equation*}
By the same argument above, this has the same bound up to choice of $\gamma_m$ as in \eqref{e:exp_bound_rings}.}

We observe $\lambda$ informs the value of $\gamma_m$. The exact relation is not important as we know they are proportional with the constant of proportionality independent of $\theta$. The dependence of the convergence rates $\gamma_h$ on $\theta$ follow from the form of the hopping functions when mechanical relaxation is included or not, and the $\gamma_m$ dependence follows from the choice of nearest neighbor rings, so includes a $\tau$ dependence.

\end{proof}

\appendix
\section{Mechanical Relaxation Model}
\label{app:relax}

 We next define the moir\'e superlattice \cite{cazeaux2018energy} with its unit cell:
 \begin{align*}
 &\R_\M := (A_2^{-1} - A_1^{-1})^{-1}\mathbb{Z}^2, \\
 & \Gamma_\M := \{ (A_2^{-1} - A_1^{-1})^{-1}\beta \text{ : } \beta \in [0,1)^2\}.
 \end{align*}
 We can map the moir\'e supercell to configuration space by the mappings $\gamma_j : \Gamma_\M \rightarrow \Gamma_j$,
 \begin{equation*}
 \gamma_j : x \mapsto (I-A_jA_{P_j}^{-1})x.
 \end{equation*}
 \dm{Here $P_1 = 2$ and $P_2 = 1$.}
 We then define
 \begin{equation*}
 u_{\M,j}(x) := u_{j}(\gamma_{P_j}(x)).
 \end{equation*}
 Since $A_1 \approx A_2$, locally the lattice configuration looks periodic. As a consequence, the interlayer coupling energy can be approximated using a Generalized Stacking Fault Energy (GSFE) functional, $\Phi : \Gamma_j \rightarrow \mathbb{R}$.
  In particular, the interlayer energy can be shown to be well modeled by \cite{relaxphysics18,cazeaux2018energy}:
\begin{equation*}
\sum_{j=1}^2\frac{1}{2}\mint_{\Gamma_M} \Phi(\gamma_{P_j}(x) + u_{\M,P_j}(x) - u_{\M,j}(x))dx.
\end{equation*}
This is effective because the interlayer coupling energy is assumed to be perturbative, i.e., $\|\Phi\|_\infty \ll 1$. The intralayer energy can be modeled via elasticity tensors, $\varepsilon_j$, $j \in \{1,2\}$. 
The intralayer energy is then given by
\begin{equation*}
\sum_{j=1}^2\int_{\Gamma_\M} \frac{\nabla u_{\M,j}+\nabla u_{\M,j}^T}{2} \cdot \varepsilon_{j} \frac{\nabla u_{\M,j} + \nabla u_{\M,j}^T}{2}dx,
\end{equation*}
and the total elastic energy functional to be minimized is then
\begin{equation*}
\begin{split}
E(u) = \sum_{j=1}^2\mint_{\Gamma_\M}\biggl(\Phi(\gamma_{P_j}(x)& + u_{\M,j}(x)-u_{\M,P_j}(x))\\
&+ \frac{1}{2} \frac{\nabla u_{\M,j}+\nabla u_{\M,j}^T}{2} \cdot \varepsilon_{j} \frac{\nabla u_{\M,j} + \nabla u_{\M,j}^T}{2}\biggr)dx.
\end{split}
\end{equation*}
If the two materials are identical, $\varepsilon_1 = \varepsilon_2$ and $\det(A_1) = \det(A_2)$, so by symmetry
\begin{equation*}
u_{\M,1} = \frac{1}{2}u_\M, \qquad
u_{\M,2} = -\frac{1}{2}u_\M,
\end{equation*}
where $u_\M = u_{\M,1} -u_{\M,2}$.

Let $h_{ij}$ be the coupling tight-binding functionals defined via the distance between lattice sites.
We shall focus on twisted bilayer graphene as a case study, so we will use this symmetry in the numerics. 

\bibliographystyle{abbrv}
\bibliography{relax}

\begin{thebibliography}{10}

\bibitem{bistritzer2011}
R.~Bistritzer and A.~H. MacDonald.
\newblock Moir\'e bands in twisted double-layer graphene.
\newblock {\em Proceedings of the National Academy of Sciences},
  108(30):12233--12237, 2011.

\bibitem{wannier2007}
C.~Brouder, G.~Panati, M.~Calandra, C.~Mourougane, and N.~Marzari.
\newblock Exponential localization of {Wannier} functions in insulators.
\newblock {\em Phys. Rev. Lett.}, 98:046402, Jan 2007.

\bibitem{Bultinck2020}
N.~Bultinck, E.~Khalaf, S.~Liu, S.~Chatterjee, A.~Vishwanath, and M.~P.
  Zaletel.
\newblock Ground state and hidden symmetry of magic-angle graphene at even
  integer filling.
\newblock {\em Phys. Rev. X}, 10:031034, Aug 2020.

\bibitem{cances2016}
E.~Canc\`es, P.~Cazeaux, and M.~Luskin.
\newblock Generalized {K}ubo formulas for the transport properties of
  incommensurate {2D} atomic heterostructures.
\newblock {\em Journal of Mathematical Physics}, 58:063502, 2017.

\bibitem{Cao2018sc}
Y.~Cao, V.~Fatemi, S.~Fang, K.~Watanabe, T.~Taniguchi, E.~Kaxiras, and
  P.~Jarillo-Herrero.
\newblock Unconventional superconductivity in magic-angle graphene
  superlattices.
\newblock {\em Nature}, 556:43 EP --, Mar 2018.
\newblock Article.

\bibitem{carr2018}
S.~Carr, S.~Fang, P.~Jarillo-Herrero, and E.~Kaxiras.
\newblock Pressure dependence of the magic twist angle in graphene
  superlattices.
\newblock {\em Phys. Rev. B}, 98:085144, Aug 2018.

\bibitem{Carr2019exact}
S.~Carr, S.~Fang, Z.~Zhu, and E.~Kaxiras.
\newblock Exact continuum model for low-energy electronic states of twisted
  bilayer graphene.
\newblock {\em Phys. Rev. Research}, 1:013001, Aug 2019.

\bibitem{carr2017}
S.~Carr, D.~Massatt, S.~Fang, P.~Cazeaux, M.~Luskin, and E.~Kaxiras.
\newblock Twistronics: Manipulating the electronic properties of
  two-dimensional layered structures through their twist angle.
\newblock {\em Phys. Rev. B}, 95:075420, Feb 2017.

\bibitem{duality20}
S.~Carr, D.~Massatt, M.~Luskin, and E.~Kaxiras.
\newblock Duality between atomic configurations and {B}loch states in
  twistronic materials.
\newblock {\em Physical Review Research}, page 033162 (12 pp), 2020.

\bibitem{relaxphysics18}
S.~{Carr}, D.~{Massatt}, S.~B. {Torrisi}, P.~{Cazeaux}, M.~{Luskin}, and
  E.~{Kaxiras}.
\newblock Relaxation and domain formation in incommensurate {2D}
  heterostructures.
\newblock {\em Physical Review B}, page 224102 (7 pp), 2018.

\bibitem{Castro2009}
A.~H. Castro~Neto, F.~Guinea, N.~M.~R. Peres, K.~S. Novoselov, and A.~K. Geim.
\newblock The electronic properties of graphene.
\newblock {\em Rev. Mod. Phys.}, 81:109--162, Jan 2009.

\bibitem{Catarina2019}
G.~Catarina, B.~Amorim, E.~V. Castro, E.~V. Castro, E.~V. Castro, J.~M. V.~P.
  Lopes, J.~M. V.~P. Lopes, and N.~Peres.
\newblock {Twisted Bilayer Graphene: Low‐Energy Physics, Electronic and
  Optical Properties}.
\newblock In {\em Handbook of Graphene}, pages 177--231. 2019.

\bibitem{Catarina2019TwistedBG}
G.~Catarina, B.~Amorim, E.~V. Castro, J.~Lopes, and N.~Peres.
\newblock Twisted bilayer graphene: low-energy physics, electronic and optical
  properties.
\newblock In M.~Zhang, editor, {\em Handbook of Graphene}, volume~3, pages
  177--232. Scrivener, 2019.

\bibitem{cazeaux2018energy}
P.~Cazeaux, M.~Luskin, and D.~Massatt.
\newblock Energy minimization of {2D} incommensurate heterostructures.
\newblock {\em Arch. Rat. Mech. Anal.}, 235:1289--1325, 2019.

\bibitem{chen2020plane}
H.~Chen, A.~Zhou, and Y.~Zhou.
\newblock A plane wave study on the localized-extended transitions in the
  one-dimensional incommensurate systems, 2020.

\bibitem{Dai2016TwistedBG}
S.~Dai, Y.~Xiang, and D.~Srolovitz.
\newblock Twisted bilayer graphene: Moir{\'e} with a twist.
\newblock {\em Nano letters}, 16 9:5923--7, 2016.

\bibitem{hofstadterkim}
C.~Dean, L.~Wang, P.~Maher, C.~Forsythe, F.~Ghahari, Y.~Gao, J.~Katoch,
  M.~Ishigami, P.~Moon, M.~Koshino, T.~Taniguchi, K.~Watanabe, K.~Shepard,
  J.~Hone, and P.~Kim.
\newblock Hofstadter's butterfly and the fractal quantum {H}all effect in
  {moir\'e} superlattices.
\newblock {\em Nature}, 497:598, 05 2013.

\bibitem{espanol2d}
M.~Espanol, D.~Golovaty, and J.~Wilber.
\newblock A discrete-to-continuum model of weakly interacting incommensurate
  two-dimensional lattices.
\newblock {\em Proceedings of the Royal Society A: Mathematical, Physical and
  Engineering Science}, 474, 08 2017.

\bibitem{fastkubo19}
S.~Etter, D.~Massatt, M.~Luskin, and C.~Ortner.
\newblock Modeling and computation of {Kubo} conductivity for {2D}
  incommensurate bilayers.
\newblock {\em SIAM J. Multiscale Modeling \& Simulation}, 18:1525--1564, 2020.

\bibitem{shiang2016}
S.~Fang and E.~Kaxiras.
\newblock Electronic structure theory of weakly interacting bilayers.
\newblock {\em Phys. Rev. B}, 93:235153, Jun 2016.

\bibitem{Khalaf2019}
E.~Khalaf, A.~J. Kruchkov, G.~Tarnopolsky, and A.~Vishwanath.
\newblock Magic angle hierarchy in twisted graphene multilayers.
\newblock {\em Phys. Rev. B}, 100:085109, Aug 2019.

\bibitem{Prodan18}
T.~D. Kuhne and E.~Prodan.
\newblock Disordered crystals from first principles i: Quantifying the
  configuration space.
\newblock {\em Annals of Physics}, 391:120--149, 2018.

\bibitem{massatt2020efficient}
D.~Massatt, S.~Carr, and M.~Luskin.
\newblock Efficient computation of {K}ubo conductivity for incommensurate 2d
  heterostructures.
\newblock {\em Eur. Phys. J. B}, 93, 2020.

\bibitem{momentumspace17}
D.~Massatt, S.~Carr, M.~Luskin, and C.~Ortner.
\newblock Incommensurate heterostructures in momentum space.
\newblock {\em SIAM J. Multiscale Modeling \& Simulation}, 16:429--451, 2018.

\bibitem{massatt2017}
D.~Massatt, M.~Luskin, and C.~Ortner.
\newblock Electronic density of states for incommensurate layers.
\newblock {\em Multiscale Modeling \& Simulation}, 15(1):476--499, 2017.

\bibitem{Nam2017}
N.~N.~T. Nam and M.~Koshino.
\newblock Lattice relaxation and energy band modulation in twisted bilayer
  graphene.
\newblock {\em Phys. Rev. B}, 96:075311, Aug 2017.

\bibitem{prodan2012}
E.~Prodan.
\newblock Quantum transport in disordered systems under magnetic fields: A
  study based on operator algebras.
\newblock {\em Appl. Math. Res. Express}, pages 176--255, 2013.

\bibitem{Tarnopolsky2019}
G.~Tarnopolsky, A.~J. Kruchkov, and A.~Vishwanath.
\newblock Origin of magic angles in twisted bilayer graphene.
\newblock {\em Phys. Rev. Lett.}, 122:106405, Mar 2019.

\bibitem{Wang2021}
J.~Wang, Y.~Zheng, A.~J. Millis, and J.~Cano.
\newblock Chiral approximation to twisted bilayer graphene: Exact intravalley
  inversion symmetry, nodal structure, and implications for higher magic
  angles.
\newblock {\em Phys. Rev. Research}, 3:023155, May 2021.

\bibitem{flat21}
A.~Watson and M.~Luskin.
\newblock Existence of the first magic angle for the chiral model of bilayer
  graphene.
\newblock {\em Journal of Mathematical Physics}, 62:091502 (32pp), 2021.

\bibitem{wavepacketbm22}
A.~B. Watson, T.~Kong, A.~H. MacDonald, and M.~Luskin.
\newblock Bistritzer-{M}acdonald dynamics in twisted bilayer graphene.
\newblock {\em arxiv 2207.13767}, 2022.

\bibitem{incommensurategeim}
C.~Woods, L.~Britnell, A.~Eckmann, G.~Yu, R.~Gorbachev, A.~Kretinin, J.~Park,
  L.~Ponomarenko, M.~Katsnelson, Y.~Gornostyrev, K.~Watanabe, T.~Taniguchi,
  C.~Casiraghi, H.~Guo, A.~Geim, and K.~Novoselov.
\newblock Commensurate-incommensurate transition in graphene on hexagonal boron
  nitride.
\newblock {\em Nature Physics}, 10:451--456, 04 2014.

\bibitem{KimRelax18}
H.~Yoo, R.~Engelke, S.~Carr, S.~Fang, K.~Zhang, P.~Cazeaux, S.~H. Sung,
  R.~Hovden, A.~W. Tsen, T.~Taniguchi, K.~Watanabe, G.-C. Yi, M.~Kim,
  M.~Luskin, E.~B. Tadmor, E.~Kaxiras, and P.~Kim.
\newblock Atomic and electronic reconstruction at van der {Waals} interface in
  twisted bilayer graphene.
\newblock {\em Nature Materials}, pages 448--453, 2019.

\end{thebibliography}

\end{document}